\documentclass[a4paper,10pt]{article}

\usepackage[a4paper,textwidth=17cm,textheight=23.5cm,centering]{geometry}

\usepackage{enumitem}

\usepackage[utf8]{inputenc}
\usepackage[T1]{fontenc}
\usepackage{xcolor,graphicx}
\usepackage{eepic}
\usepackage[english]{babel}
\usepackage{amsfonts, amssymb, amsmath, amsthm}

\usepackage{mathrsfs}
\usepackage{cancel}

\usepackage{mathtools}
\mathtoolsset{showonlyrefs}

\usepackage{amsmath}
\numberwithin{equation}{section}

\usepackage{graphicx}
\usepackage{color}
\usepackage{graphicx}
\usepackage[colorinlistoftodos]{todonotes}
\usepackage[colorlinks=true, allcolors=blue]{hyperref}

\usepackage{array} 
\usepackage{amsfonts}
\usepackage{amsmath}
\usepackage{amssymb}
\usepackage{graphicx}
\usepackage{float} 
\usepackage{color}
\usepackage{esint}    

\usepackage{enumitem} 

\usepackage{multirow}

\newtheorem{thm}{Theorem}[section]
\newtheorem{cor}[thm]{Corollary}
\newtheorem{lem}[thm]{Lemma}

\newtheorem{defn}[thm]{Definition}
\newtheorem{rem}[thm]{Remark}

\numberwithin{equation}{section}

\newcommand{\dx}{\,{\rm d}x}
\newcommand{\dy}{\,{\rm d}y}

\def\LL{\mathrm{L}} 
\def\supp{\mathrm{supp}} 

\newcommand{\ue}{u_\epsilon}

\newcommand{\plap}{\Delta_p}
\newcommand{\plapN}{\Delta_p^{N}}

\newcommand{\gr}{\nabla}
\newcommand{\ph}{\varphi}

\newcommand{\intn}{\fint}

\DeclareMathOperator{\sgn}{sgn}

\newcommand{\A}{\mathcal{A}}
\newcommand{\ua}{\overline{u}}

\newcommand{\ub}{\underline{u}}

\newcommand{\wb}{\underline{w}}
\newcommand{\wa}{\overline{w}}

\newcommand{\RR}{\mathbb{R}}

\newcommand{\Rd}{\mathbb{R}^d}

\newcommand{\NN}{\mathbb{N}}

\newcommand{\beps}{\overline{\epsilon}}

\def\dist{\mathrm{dist}} 
\def\diam{\mathrm{diam}} 

\def\qed{\,\unskip\kern 6pt \penalty 500
	\raise -2pt\hbox{\vrule \vbox to8pt{\hrule width 6pt
			\vfill\hrule}\vrule}\par}

\def\quotient#1#2{\raise1ex\hbox{$#1$}\Big/\lower1ex\hbox{$#2$}}

\definecolor{darkblue}{rgb}{0.05, .05, .65}
\definecolor{darkgreen}{rgb}{0.1, .65, .1}
\definecolor{darkred}{rgb}{0.8,0,0}

\usepackage{tocloft}

\begin{document}

	\title{\bf A game-theoretical interpretation \\ for a doubly nonlinear parabolic equation}
	
	\author{\Large F\'{e}lix del Teso, 
		Carlos Fuertes Mor\'{a}n 
		~and~ Julio D. Rossi 
	}
	\date{} 
	
	\maketitle
	
	
	\begin{abstract}
		We introduce a game--theoretical framework for the doubly nonlinear parabolic equation
		\[
		|\partial_t u|^{p-2} \partial_t u - \Delta_p u = 0.
		\]
		where $\Delta_p u = \nabla \cdot ( |\nabla u |^{p-2} \nabla u)$ with $p>2$ is the standard $p-$Laplacian.
		A key feature to our approach is a new asymptotic mean value formula (AMVF) for the $p-$Laplacian that is robust even when the gradient vanishes and is independent of the sign of the $p-$Laplacian. This new AMVF leads naturally to a dynamic programming principle (DPP) whose solutions converge to the viscosity solution of the boundary value problem for the differential equation. In addition, solutions to the DPP coincide with value functions for a stochastic, two-players, zero-sum game that we introduce and analyze here. 
	\end{abstract}
	
	

	\noindent {\sc Keywords.}  $p-$Laplacian, asymptotic mean value formula,
	parabolic problems, stochastic game.	
	
	\noindent{\sc Mathematics Subject Classification 2020}. 35D40 35J92 35B05 35Q91 91A05.
	
	\smallskip\noindent {\sl\small\copyright~2026 by the authors. This paper may be reproduced, in its entirety, for non-commercial purposes.}
	
	
	\tableofcontents

	
	
	\section{Introduction}\label{sec1}

	The main goal of this paper is to provide a game--theoretical interpretation for the doubly nonlinear parabolic equation
	\begin{equation}\label{equation.intro}
		\left| \partial_t u \right|^{p-2}\partial_t u (x,t)- \plap u (x,t) = 0,
	\end{equation}
	where $p>2$ and $\plap$ denotes the $p$-Laplacian operator. For a smooth function $\ph$, the $p$-Laplacian is given by
	\[
	\plap \ph = \nabla \cdot \big( |\nabla \ph |^{p-2} \nabla \ph \big).
	\]
	
	Model \eqref{equation.intro} is doubly nonlinear: the diffusion is driven by the $p$-Laplacian, while the time derivative appears in the equation through the nonlinear term $|\partial_t u|^{p-2}\partial_t u$. This places the problem within the class of doubly nonlinear parabolic equations, which has been extensively studied. The works \cite{Akagi_2023,Mielke_Rossi_Savare_2013} address the existence of solutions to the Cauchy problem in a general framework, while \cite{Akagi_Schimperna_2024} establishes existence in bounded domains. Equation~\eqref{equation.intro} is studied in detail in bounded domains in \cite{Lindgren_Hynd_2016}, where the large-time behavior of solutions is analyzed. In particular, it is shown that the Rayleigh quotient computed along solutions converges to the optimal constant in Poincar\'e's inequality as $t$ goes to infinity (see also \cite{Hynd_Lindgren_2017}, where these quotients are used to approximate eigenvalues and eigenvectors). Moreover, in the previously mentioned reference, a comparison principle for viscosity solutions is established.
	This fact is quite useful for our arguments.

	We study the equation \eqref{equation.intro} in two different settings. On the one hand, we consider the Dirichlet problem in bounded domains. On the other hand, we analyze the Cauchy problem in the whole space $\RR^d$. In both cases, our goal is to show that viscosity solutions to \eqref{equation.intro} can be characterized
	as limits of value functions for an appropriate stochastic game, thus extending known connections between nonlinear partial differential equations and game theory to this doubly nonlinear parabolic framework.

	Now, let us summarize the main contributions of this paper.
	\begin{enumerate}[label=\noindent(\roman*)] 
		\item We introduce a new asymptotic mean value formula (AMVF) for the $p$-Laplacian tailored to a game--theoretical interpretation. This new formula, partially inspired by \cite{delTeso_Rossi_2024}, offers several advantages over previous ones:
		
		\begin{enumerate}[label= $\bullet$]
			\item The AMVF in \cite{delTeso_Rossi_2024} requires knowing a priori the sign of $\plap \ph$. While sufficient for elliptic problems of the form $-\plap u = f$, in the parabolic problem \eqref{equation.intro} the sign of the $p$-Laplacian depends on the sign of the time derivative, which is unknown beforehand. The new AMVF is more robust, providing a single algebraic expression independently of this sign.
			
			\item The formula in \cite{delTeso_Rossi_2024} fails at points where $\nabla \ph = 0$. Our new AMVF remains valid in this case, which is particularly useful when analyzing the associated Dynamic Programming Principle (DPP) and proving convergence to viscosity solutions of the parabolic problem.
		\end{enumerate}
		
		\item Using the new AMVF for the $p$-Laplacian, we derive a corresponding AMVF for the parabolic problem \eqref{equation.intro}. Here let us remark that we 
		deal with a $p-1$ homogeneous doubly nonlinear problem.
		
		\item We characterize viscosity solutions of problem \eqref{equation.intro} via the viscosity formulation of the AMVF.
		
		\item Based on the parabolic AMVF, we formulate and analyze the associated DPP for problem \eqref{equation.intro}, both in bounded domains and in $\RR^d$.
		We show existence and uniqueness for the boundary value problem 
		for the DPP. We also obtain some qualitative properties (like a comparison principle) for solutions to the DPP.
		
		\item We also prove that solutions of the DPP converge to the solution of the corresponding PDE problem.

		\item Finally, we describe the game induced by asymptotic mean value operator. We prove, using probabilistic arguments, that the game has a value that coincides with the solution to the DPP. Here we extend results contained in \cite{MPRparabolico} (see also the book \cite{libroBR}).
	\end{enumerate}

	\subsection{Heuristic derivation of the AMVF}\label{sec:heuristic}
	For the reader's convenience, we describe the underlying ideas leading to the AMVF in a simplified setting. Recall that if $x \in \mathbb{R}^d$ and $\nabla \varphi(x) \neq 0$, then
	\begin{align}\label{eq:plap-nplap}
		\plap \varphi(x)
		=
		|\nabla \varphi(x)|^{p-2}
		\Big(
		\Delta \varphi(x)
		+
		(p-2)
		\Big\langle
		D^2 \varphi(x)\frac{\nabla \varphi(x)}{|\nabla \varphi(x)|},
		\frac{\nabla \varphi(x)}{|\nabla \varphi(x)|}
		\Big\rangle
		\Big)
		\eqqcolon
		|\nabla \varphi(x)|^{p-2} \plapN \varphi(x),
	\end{align}
	where $\plapN \varphi (x)$ denotes the so-called normalized $p$-Laplacian, see
	\cite{MPRparabolico} and the book \cite{libroBR}.  To present our main ideas avoiding technical issues at critical points we consider, 
	as a model case,
	\[
	\mathcal{L}[\varphi](x) = |\nabla \varphi(x)|\,\Delta \varphi(x),
	\]
	which corresponds to \eqref{eq:plap-nplap} with $p=3$, but without the term
	$
	\langle
	D^2 \varphi \frac{\nabla \varphi}{|\nabla \varphi|},
	\frac{\nabla \varphi}{|\nabla \varphi|}
	\rangle.
	$ 
	For this simplified version we will use the classical asymptotic expansions
	\begin{align}\label{eq:ASMFintro}
		|\nabla \varphi(x)|
		&\sim
		\frac{1}{r}\Big( \sup_{B_r(x)} \varphi - \varphi(x) \Big),
		\qquad
		\Delta \varphi(x)
		\sim
		\frac{1}{\rho^{2}}\Big( \fint_{B_{\gamma\rho}(x)} \varphi(y)\,dy - \varphi(x) \Big),
	\end{align}
	where $\gamma=\sqrt{4+2d}$. We will also use the notations 
	$$a_+ = \max\{a,0\}\quad \mbox{and} \quad a_- = \max\{-a,0\}
	\quad \mbox{so that} \quad a=a_+-a_-,$$ and for $\beta>0$ we denote the signed power as $$(a)^\beta = \operatorname{sgn}(a)\,|a|^\beta.$$
	
	To continue with the analysis of the model equation, assume first that $\Delta \varphi(x)\ge 0$. Using the identity 
	$a^{1/2}b^{1/2}
	=
	\inf_{c>0}
	\Big\{
	\frac{1}{2}c^{1/2}a
	+
	\frac{1}{2}c^{-1/2}b
	\Big\}$ for $a,b\ge0$, 
	we obtain
	\begin{equation}\label{heuristic-1}
		\big(\mathcal{L}[\varphi](x)\big)^{1/2}
		=
		|\nabla \varphi(x)|^{1/2}
		\big(\Delta \varphi(x)\big)^{1/2}
		=
		\inf_{c>0}
		\Big\{
		\frac{c^{1/2}}{2}|\nabla \varphi(x)|
		+
		\frac{c^{-1/2}}{2}\Delta \varphi(x)
		\Big\}.
	\end{equation}
	Although this identity holds only when $\mathcal{L}[\varphi](x)\ge0$, we always have
	\[
	\big(\mathcal{L}[\varphi](x)^{1/2}\big)_+
	=
	\Big(
	\inf_{c>0}
	\Big\{
	\frac{c^{1/2}}{2}|\nabla \varphi(x)|
	+
	\frac{c^{-1/2}}{2}\Delta \varphi(x)
	\Big\}
	\Big)_+ .
	\]
	Indeed, if $\Delta \varphi(x)\ge0$, this reproduces \eqref{heuristic-1}; if $\Delta \varphi(x)<0$, the infimum equals $-\infty$, so both sides vanish. Similarly,
	\[
	\big(\mathcal{L}[\varphi](x)^{1/2}\big)_-
	=
	\Big(
	\inf_{c>0}
	\Big\{
	\frac{c^{1/2}}{2}|\nabla \varphi(x)|
	+
	\frac{c^{-1/2}}{2}\big(-\Delta \varphi(x)\big)
	\Big\}
	\Big)_- .
	\]
	Fix $\epsilon>0$ and choose $r=\epsilon^2 c^{1/2}$ and $\rho=\epsilon c^{-1/2}$ in \eqref{eq:ASMFintro}, then 
	\begin{align*}
		\left(\mathcal{L}[\ph](x)\right)_+  &\sim \Big(\inf_{c>0} \Big\{  \frac{\sup_{B_{\epsilon^2 c^{1/2}}(x)}\ph-\ph(x)}{2\epsilon^2} + \frac{\fint_{\gamma\epsilon c^{-1/4}}\ph(y)\dy-\ph(x)}{2\epsilon^2} \Big\}\Big)_+\\
		&=\frac{1}{\epsilon^2} \Big( \max \Big[ \inf_{c>0} \Big\{ \frac{1}{2} \sup_{B_{\epsilon^2 c^{1/2}}(x)}\ph + \frac{1}{2} \fint_{\gamma\epsilon c^{-1/4}}\ph(y)\dy \Big\},\, \ph(x)\Big]-\ph(x)\Big)
	\end{align*}
	and
	\begin{align*}
		\left(\mathcal{L}[\ph](x)\right)_- &\sim \Big(\inf_{c>0} \Big\{ \frac{\ph(x)-\inf_{B_{\epsilon^2 c^{1/2}}(x)}\ph}{2\epsilon^2} +  \frac{\ph(x)-\fint_{\gamma\epsilon c^{-1/4}}\ph(y)\dy}{2\epsilon^2} \Big\}\Big)_-\\
		&=- \frac{1}{\epsilon^2}\Big(\min \Big[\sup_{c>0} \Big\{ \frac{1}{2} \sup_{B_{\epsilon^2 c^{1/2}}(x)}\ph + \frac{1}{2} \fint_{\gamma\epsilon c^{-1/4}}\ph(y)\dy \Big\},\, \ph(x)\Big]-\ph(x)\Big).
	\end{align*}
	Here we used $\inf\{-a\}=-\sup\{a\}$, $(-a)_-=a_+$, and $\max\{a-c,0\}=\max\{a,c\}-c$. Therefore,\normalcolor
	\begin{align*}
		\big(\mathcal{L}[\varphi](x)\big)^{1/2}
		&=
		\big(\mathcal{L}[\varphi](x)^{1/2}\big)_+
		-
		\big(\mathcal{L}[\varphi](x)^{1/2}\big)_- \sim
		\frac{\A_\epsilon[\varphi](x)-\varphi(x)}{\epsilon^2/2},
	\end{align*}
	where
	\begin{equation}\label{operator-A-1/2}
		\begin{split}
			\A_\epsilon[\varphi](x)
			&=\frac{1}{2}\max\Big[
			\inf_{c\in[m,M]}
			\Big\{
			\frac{1}{2}\sup_{B_{\epsilon^2 c^{1/2}}(x)}\varphi
			+
			\frac{1}{2}\fint_{B_{\gamma \epsilon c^{-1/2}}(x)}\varphi(y)\,dy
			\Big\},
			\varphi(x)
			\Big] \\
			&\quad+
			\frac{1}{2}\min\Big[
			\sup_{c\in[m,M]}
			\Big\{
			\frac{1}{2}\inf_{B_{\epsilon^2 c^{1/2}}(x)}\varphi
			+
			\frac{1}{2}\fint_{B_{\gamma \epsilon c^{-1/2}}(x)}\varphi(y)\,dy
			\Big\},
			\varphi(x)
			\Big].
		\end{split}
	\end{equation}
	The restriction $c\in [m,M]$ is required for a rigorous justification of the asymptotics.
	We choose $m$ and $M$ according to \eqref{m-M} and observe that $m(\epsilon)\to0^+$ and $M(\epsilon)\to\infty$ as $\epsilon\to0^+$.
	
	Finally, we observe that the previous AMVP also extends to the parabolic setting
	$$
	\big(\partial_t \phi(x,t)\big)^2
	=
	|\nabla \phi(x,t)|\,\Delta \phi(x,t).
	$$
	Indeed, using the expansion 
	$$\partial_t \phi(x,t)
	\sim
	\frac{\phi(x,t+\tau)-\phi(x,t)}{\tau},$$
	with $\tau=\epsilon^2/2$, together with \eqref{operator-A-1/2} for any $t>0$ fixed, we obtain 
	$$
	\partial_t \phi(x,t)-\big(\mathcal{L}[\phi](x,t)\big)^{1/2}
	\sim
	\frac{\phi(x,t+\epsilon^2/2)-\A_\epsilon[\phi](x,t)}{\epsilon^2/2}.
	$$
	
	For the general case of \eqref{eq:plap-nplap}, we use 
	$$
	\frac12 \sup_{B_{\gamma \rho}(x)}\ph + \frac12 \inf_{B_{\gamma \rho}(x)}\ph - \ph (x) \sim 
	\langle
	D^2 \varphi(x)\frac{\nabla \varphi(x)}{|\nabla \varphi(x)|},
	\frac{\nabla \varphi(x)}{|\nabla \varphi(x)|} \rangle,
	$$
	that comes from the analysis of Tug-of-War games in \cite{PSSW}, to obtain approximations to
	$$(
	|\nabla \varphi(x)|^{p-2} \plapN \varphi(x))^{\frac{1}{(p-1)}}  = \left(
	|\nabla \varphi(x)|^{p-2}
	\Big(
	\Delta \varphi(x)
	+
	(p-2)
	\Big\langle
	D^2 \varphi(x)\frac{\nabla \varphi(x)}{|\nabla \varphi(x)|},
	\frac{\nabla \varphi(x)}{|\nabla \varphi(x)|}
	\Big\rangle
	\Big) \right)^{\frac{1}{(p-1)}}.
	$$
	
	The obtained AMVF can also be used to handle the parabolic equation,
	$$
	\partial_t \phi(x,t)- (\plap \phi(x,t))^{\frac{1}{(p-1)}} = 
	\partial_t \phi(x,t)- (
	|\nabla \varphi(x)|^{p-2} \plapN \phi(x,t))^{\frac{1}{(p-1)}}
	$$
	that is our main goal in this paper.

	\subsection{Comments on related literature}
	
	Let us now refer briefly to some related previous results. 
	Asymptotic mean value formulas for the classical Laplacian can be found in
	\cite{Blaschke,Ku,Privaloff}, for other linear elliptic operators in \cite{Littman.et.al1963}, and
	for degenerate elliptic equations in \cite{BL12}.
	In addition, asymptotic mean value formulas were found for nonlinear operators
	such as the normalized $p$-Laplacian.
	These mean value formulas come from the connection 
	between probability (via the
	dynamic programming principle for Tug-of-War games) 
	and the normalized infinity Laplacian, see \cite{LeGruyerArcher,LeGruyer,PSSW}. A 
	nonlinear mean value property for $p$-harmonic functions first appeared in \cite{Manfredi_Parviainen_Rossi_2010},
	while for the non-homogenous equation we refer to \cite{delTeso_Rossi_2024}. 
	Concerning mean value formulas for
	the heat equation we also refer to \cite{Watson} and \cite{Aimar}. 
	For a version with variable coefficients, see \cite{Fabes}.
	For asymptotic nonlinear mean value formulas for two diﬀerent parabolic versions of the Monge-Ampere equation
	we quote \cite{BChMR} and for parabolic equations given in terms of eigenvalues of the Hessian to \cite{BER}.
	
	See also the books \cite{libroBR,Lewicka_book} for further related references and other equations.

	\section{Preliminaries and main results}\label{sec:mainres}
	The aim of this section is to state the main results contained in this paper. 
	
	We rely on several results existing in the literature, which we list below.
	We will need the following classic asymptotic formulas for the modulus of the gradient.

	\begin{lem}\label{exp-gradient}
		Let $d\geq 1$, $x\in\Rd$, $R>0$, and $\ph \in C^2(B_R(x))$. Then, for all $r\in (0,R/2)$, we have
		\begin{equation}\label{gr-1}
			\ph(x)-\inf_{B_r(x)}\ph = r \, | \gr \ph(x)| + r \, E_1(r,\ph,x), \quad \textup{and} \quad
			\sup_{B_r(x)}\ph - \ph(x) = r \, | \gr \ph(x)| + r \, E_1(r,\ph,x),
		\end{equation}
		with $E_1(r,\ph,x) = o_r(1)$. Moreover, if $\ph \in C^3(B_R(x))$, there exists a constant $k_1=k_1(d)>0$ such that
		\begin{equation}\label{error-1}
			\big| E_1(r,\ph,x) \big| \le k_1 \, r \, \| D^2 \ph \|_{\LL^\infty(B_R(x))}.
		\end{equation}
	\end{lem}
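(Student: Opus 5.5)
The plan is a Taylor expansion around $x$ plus one observation: the same error term $E_1$ serves both identities in \eqref{gr-1}. I would define $E_1$ through the supremum identity and then control the infimum identity separately, or else read the statement as asserting two error terms that obey the same bounds. Throughout, fix $r\in(0,R/2)$, so that $\overline{B_r(x)}\subset B_R(x)$ and $\ph$ is $C^2$ on a neighbourhood of the closed ball. Write $y=x+h$ with $|h|\le r$.

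\textbf{Upper bound for the supremum.} Taylor's theorem with Lagrange remainder gives
\[
\ph(x+h)=\ph(x)+\langle\gr\ph(x),h\rangle+\tfrac12\langle D^2\ph(\xi)h,h\rangle .
\]
Hence
\[
\sup_{B_r(x)}\ph-\ph(x)\le r|\gr\ph(x)|+\tfrac12 r^2\|D^2\ph\|_{\LL^\infty(B_r(x))}.
\]

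\textbf{Lower bound for the supremum.} If $\gr\ph(x)\neq0$, test with $h=s\,\gr\ph(x)/|\gr\ph(x)|$ and let $s\uparrow r$. Using continuity of $\ph$ up to the boundary of the ball, this gives
\[
\sup_{B_r(x)}\ph-\ph(x)\ge r|\gr\ph(x)|-\tfrac12 r^2\|D^2\ph\|_{\LL^\infty}.
\]
If $\gr\ph(x)=0$, choose any direction; the same bound holds trivially. Combining the two bounds, $E_1:=\frac1r\big(\sup_{B_r(x)}\ph-\ph(x)\big)-|\gr\ph(x)|$ satisfies $|E_1|\le \tfrac12 r\,\|D^2\ph\|_{\LL^\infty(B_R(x))}$. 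The infimum case is identical, using the direction $-\gr\ph(x)$, or equivalently by applying the sup case to $-\ph$. It gives an error with the same bound.

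\textbf{The $o_r(1)$ claim for $\ph\in C^2$.} Since $\ph\in C^2(B_R(x))$, $D^2\ph$ is continuous and hence bounded on the compact set $\overline{B_{R/2}(x)}$. The estimate above therefore already gives $|E_1|\le Cr$, and in particular $E_1=o_r(1)$. One could alternatively argue with the Peano remainder, $\ph(x+h)=\ph(x)+\langle\gr\ph(x),h\rangle+o(|h|)$, which yields $o_r(1)$ under mere differentiability. Under the $C^3$ hypothesis, estimate \eqref{error-1} follows with $k_1=1/2$, independent of $d$. The only care needed is to replace $\|D^2\ph\|_{\LL^\infty(B_R(x))}$ by the norm over the smaller ball where needed, which is fine since the latter is smaller.

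\textbf{Main obstacle.} The only delicate point is that the statement uses a single symbol $E_1$ for both identities. I would handle this by remarking that the two errors, for $\ph$ and for $-\ph$, are distinct functions satisfying identical bounds. Alternatively, I would note that the paper's $E_1$ is a generic notation for such an error, so the identities hold with possibly different $E_1$ obeying \eqref{error-1}.
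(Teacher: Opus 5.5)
Your proof is correct. The paper gives no proof of this lemma; it quotes it as a classical fact. So there is no competing argument to compare against, and your second-order Taylor expansion (upper bound over all $|h|<r$, lower bound along $h=s\,\nabla\varphi(x)/|\nabla\varphi(x)|$ with $s\uparrow r$, and the infimum case via $-\varphi$) is the standard proof.

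Three small remarks.

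\emph{Two error terms.} Reading $E_1$ as two possibly different error terms obeying the same bound is necessary, not cosmetic. For $\varphi(y)=|y|^2$ at $x=0$, the supremum identity forces $E_1=r$, while the infimum identity forces $E_1=0$.

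\emph{The constant.} Your constant $k_1=\tfrac12$ presupposes the operator norm (or the Frobenius norm) of $D^2\varphi$. With an entrywise maximum norm you would get $k_1=d/2$, which is presumably why the statement writes $k_1(d)$.

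\emph{Regularity.} As you observe, the $C^3$ hypothesis plays no role: the bound $|E_1|\le \tfrac12 r\,\|D^2\varphi\|_{L^\infty(B_r(x))}$ already holds for $C^2$ functions.
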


	One of the key results in the development of the theory extending the classical mean value property for the Laplacian to a nonlinear and non-homogeneous problem related to the $p$-Laplacian, was established in \cite{Manfredi_Parviainen_Rossi_2010}, see also \cite{libroBR,Lewicka_book}. 
	We refer to the following theorem.
	
	\begin{thm}\label{exp-plapN}
		Let $d\geq 1$, $p\geq 2$, $x\in\Rd$, $R>0$ and $\ph\in C^2(B_R(x))$ with $\gr\ph(x)\not=0$. Let also $\beta=\frac{p-2}{p+d}$ and $\gamma=\sqrt{2(p+d)}$ and  define
		$$M_\rho[\ph](x)\coloneqq\frac{\beta}{2}\Big( \sup_{B_{\gamma \rho}(x)}\ph + \inf_{B_{\gamma \rho}(x)}\ph \Big) + (1-\beta) \intn_{B_{\gamma \rho}(x)} \ph(y)\dy.$$
		Then, for any $\rho$ sufficiently small, we have 
		\begin{equation*}
			M_\rho[\ph](x)-\ph(x)=\rho^2 \plapN \ph(x) + \rho^2 E_2(\rho,\ph,x),
		\end{equation*}
		with $E_2(\rho,\ph,x)=o_\rho(1)$.
		Moreover, if $\ph \in C^3(B_R(x))$, there exists a constant $k_1=k_1(d)>0$ such that
		\begin{equation}\label{error-2}
			\left| E_2(\rho,\ph,x)\right|\leq k_2 \rho \left(\| D^3 \ph\|_{\LL^\infty(B_R(x))} + \frac{|D^2\ph(x)|^2}{|\gr \ph(x)|}\right).
		\end{equation}
	\end{thm}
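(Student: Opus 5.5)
The expansion follows from Taylor's theorem at $x$, treating the three pieces of $M_\rho[\ph](x)$ separately and then recombining them with the weights $\beta/2,\beta/2,1-\beta$. Write $\ph(y)=\ph(x)+\langle \gr\ph(x),h\rangle+\frac12\langle D^2\ph(x)h,h\rangle+R(h)$ with $h=y-x$. Here $R(h)=o(|h|^2)$ for $\ph\in C^2$, and $|R(h)|\le \frac16\|D^3\ph\|_{\LL^\infty}|h|^3$ for $\ph\in C^3$.

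\textbf{The average.} The first-order term integrates to zero by symmetry. Since $\fint_{B_{\gamma\rho}}h_ih_j\,dh=\frac{(\gamma\rho)^2}{d+2}\delta_{ij}$, we get
\[
\fint_{B_{\gamma\rho}(x)}\ph\dy-\ph(x)=\frac{\gamma^2\rho^2}{2(d+2)}\Delta\ph(x)+O\big(\rho^3\|D^3\ph\|\big).
\]

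\textbf{The midrange.} Put $\nu=\gr\ph(x)/|\gr\ph(x)|$ and $s=\gamma\rho$. Evaluating at the symmetric points $\pm s\nu$ and adding gives the lower bound
\[
\tfrac12\big(\sup_{B_s}\ph+\inf_{B_s}\ph\big)\ge\tfrac12\big(\ph(x+s\nu)+\ph(x-s\nu)\big)=\ph(x)+\tfrac{s^2}{2}\langle D^2\ph(x)\nu,\nu\rangle+O(s^3).
\]
For the upper bound, let $x_s$ be a maximizer of $\ph$ on $\overline{B_s(x)}$. Since $\gr\ph(x)\neq0$, for $\rho$ small the maximizer lies on the boundary, and a first-order analysis of the Lagrange condition gives $x_s=x+s\nu+O\big(s^2|D^2\ph|/|\gr\ph|\big)$. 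We then use $\inf\le\ph(2x-x_s)$ and expand the pair $\ph(x_s)+\ph(2x-x_s)$ symmetrically; the odd terms cancel, so
\[
\sup+\inf\le 2\ph(x)+\langle D^2\ph(x)(x_s-x),x_s-x\rangle+O(s^3).
\]
Replacing $x_s-x$ by $s\nu$ costs $O\big(s^3|D^2\ph|^2/|\gr\ph|\big)$. This replacement is the main obstacle: it is where the nonvanishing gradient is needed, and it produces the term $|D^2\ph(x)|^2/|\gr\ph(x)|$ in \eqref{error-2}. In the merely $C^2$ case we only know $x_s\to x+s\nu$ after rescaling by $s$, so the cost is $o(s^2)$, which is enough for $E_2=o_\rho(1)$.

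\textbf{Combining.} Adding the two expansions with their weights gives
\[
M_\rho[\ph](x)-\ph(x)=\gamma^2\rho^2\Big[\frac{\beta}{2}\langle D^2\ph\,\nu,\nu\rangle+\frac{1-\beta}{2(d+2)}\Delta\ph\Big]+\text{errors}.
\]
With $\beta=\frac{p-2}{p+d}$ we have $1-\beta=\frac{d+2}{p+d}$, and with $\gamma^2=2(p+d)$ the coefficients become $p-2$ and $1$. The bracket is therefore exactly $\plapN\ph(x)$, and collecting the error terms gives \eqref{error-2}, with $k_2$ depending only on $d$ and $p$.
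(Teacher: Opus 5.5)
Your computation of the average, your upper bound for the midrange (maximizer $x_s$ on the sphere, reflection $2x-x_s$ to bound the infimum, Lagrange condition to locate $x_s$), and the final bookkeeping $\gamma^2\beta/2=p-2$, $\gamma^2(1-\beta)/(2(d+2))=1$ are all correct. This is the reflection argument of Manfredi, Parviainen and Rossi, which the paper quotes rather than reproves.

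The lower bound for the midrange, however, is wrong as written. Evaluating at $x+s\nu$ gives $\sup_{B_s}\ph\ge\ph(x+s\nu)$. Evaluating at $x-s\nu$ only gives $\inf_{B_s}\ph\le\ph(x-s\nu)$, which points the wrong way: a single point value bounds an infimum only from above. The inequality $\sup_{B_s}\ph+\inf_{B_s}\ph\ge\ph(x+s\nu)+\ph(x-s\nu)$ is in fact false in general. Take $\ph(y)=y_1+\tfrac12y_1^2+y_1y_2$ at $x=0$, so $\nu=e_1$ and $\gr\ph\neq0$ in $B_1$. A direct computation on the circle $|y|=s$ gives
\[
\sup_{B_s}\ph+\inf_{B_s}\ph=s^2-s^4+O(s^5)<s^2=\ph(se_1)+\ph(-se_1)
\]
for small $s$. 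Here the defect is $O(s^4)$, well within the allowed error, so the expansion itself is not in danger. But your argument does not establish its lower half.

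The repair is to run your upper-bound argument with the roles of maximum and minimum exchanged. Take a minimizer $\tilde x_s$ of $\ph$ on $\overline{B_s(x)}$; it again lies on the sphere. Bound the supremum by the reflected point, $\sup_{B_s}\ph\ge\ph(2x-\tilde x_s)$. Then
\[
\sup+\inf\ge\ph(\tilde x_s)+\ph(2x-\tilde x_s)=2\ph(x)+\langle D^2\ph(x)(\tilde x_s-x),\tilde x_s-x\rangle+O(s^3\|D^3\ph\|).
\]
The Lagrange condition $\gr\ph(\tilde x_s)=-\lambda(\tilde x_s-x)$ with $\lambda>0$ gives $\tilde x_s=x-s\nu+O(s^2|D^2\ph|/|\gr\ph(x)|)$. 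So the quadratic term equals $s^2\langle D^2\ph(x)\nu,\nu\rangle$ up to the same error as in your upper bound. Equivalently, apply your upper bound to $-\ph$.

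One small point on the error term: the Lagrange condition naturally produces $\sup_{B_s(x)}|D^2\ph|$. To get exactly $|D^2\ph(x)|^2/|\gr\ph(x)|$ as in \eqref{error-2}, expand $\gr\ph(x_s)=\gr\ph(x)+D^2\ph(x)(x_s-x)+O(s^2\|D^3\ph\|)$. The resulting cross term $s^4|D^2\ph(x)|\,\|D^3\ph\|/|\gr\ph(x)|$ is absorbed into $s^3\|D^3\ph\|$, using $s|D^2\ph(x)|\le|\gr\ph(x)|$ for $s$ small.
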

	
	A key tool for the derivation of all the asymptotic expansions contained in this paper is the following numerical lemma. It states that any geometric mean can be expressed as the infimum of certain arithmetic means. A proof can be found in \cite{delTeso_Rossi_2024}.
	
	\begin{lem}\label{infimum}
		Let $a,b\geq 0$ and $0<\alpha<1$, then
		\begin{equation}\label{infimum-1}
			a^\alpha b^{1-\alpha} = \inf_{c>0} \left\{ \alpha c^{1-\alpha} a + (1-\alpha)c^{-\alpha} b \right\}.
		\end{equation}
		Moreover, for any $0\leq m < M \leq \infty$ we have
		\begin{equation}\label{error-3}
			\left|a^\alpha b^{1-\alpha} - \inf_{c\in [m,M]} \left\{ \alpha c^{1-\alpha} a + (1-\alpha)c^{-\alpha} b \right\} \right|\leq \alpha \, a \, m^{1-\alpha} + (1-\alpha)b M^{-\alpha}.
		\end{equation}
	\end{lem}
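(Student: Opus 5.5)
The plan is to reduce both claims to one-variable calculus in $c$, using the function
\[
f(c) \coloneqq \alpha c^{1-\alpha} a + (1-\alpha) c^{-\alpha} b, \qquad c>0 .
\]

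\emph{Identity \eqref{infimum-1}.} Consider first $a,b>0$. Then $f$ is smooth on $(0,\infty)$ and
\[
f'(c) = \alpha(1-\alpha)\, c^{-\alpha-1}\,(a c - b),
\]
so $f$ decreases on $(0,b/a]$ and increases on $[b/a,\infty)$. Evaluating at $c^*=b/a$ gives
\[
f(c^*) = \alpha\, a^{\alpha} b^{1-\alpha} + (1-\alpha)\, a^{\alpha} b^{1-\alpha} = a^\alpha b^{1-\alpha},
\]
which proves \eqref{infimum-1} in this case. The degenerate cases are direct.
\begin{itemize}
\item If $a=0$, then $f(c)=(1-\alpha)c^{-\alpha}b\to 0$ as $c\to\infty$, so $\inf f = 0 = a^\alpha b^{1-\alpha}$.
\item If $b=0$, the same conclusion follows by letting $c\to 0^+$.
\end{itemize}
Equivalently, \eqref{infimum-1} is weighted AM--GM applied to $c^{1-\alpha}a$ and $c^{-\alpha}b$.

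\emph{Error bound \eqref{error-3}.} Since $[m,M]\subset(0,\infty)$ (or its closure), the restricted infimum is at least $a^\alpha b^{1-\alpha}$. It therefore suffices to produce some $c\in[m,M]$, or a limit of such points, with
\[
f(c)\le a^\alpha b^{1-\alpha} + \alpha a m^{1-\alpha} + (1-\alpha) b M^{-\alpha}.
\]
There are three cases according to where $c^*=b/a$ lies; when $a=0$ take $c^*=\infty$, and when $b=0$ take $c^*=0$.
\begin{itemize}
\item If $c^*\in[m,M]$, choose $c=c^*$; the error is $0$.
\item If $c^*<m$, choose $c=m$. Then $c^{-\alpha}b\le c^{*\,-\alpha}b$ because $b=c^*a$ gives $c^{*\,-\alpha}b = c^{*\,1-\alpha}a$, and $m^{-\alpha}b\le m^{-\alpha}(ma)=m^{1-\alpha}a$. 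Hence
\[
f(m)\le \alpha m^{1-\alpha}a + (1-\alpha) m^{1-\alpha} a = m^{1-\alpha}a .
\]
This already gives $f(m)\le a^\alpha b^{1-\alpha}+\alpha a m^{1-\alpha}+(1-\alpha)a m^{1-\alpha}$. To reach the sharper constant $\alpha a m^{1-\alpha}$, split $f(m)$ term by term instead:
\[
\alpha m^{1-\alpha}a \le \alpha a m^{1-\alpha}, \qquad (1-\alpha)m^{-\alpha}b \le (1-\alpha)c^{*\,-\alpha}b \le a^\alpha b^{1-\alpha},
\]
where the last step uses $c^{*\,-\alpha}b=a^\alpha b^{1-\alpha}$ and $(1-\alpha)\le 1$.
\item If $c^*>M$, choose $c=M$ (or let $c\to M$ when $M=\infty$). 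Symmetrically, $\alpha M^{1-\alpha}a\le \alpha c^{*\,1-\alpha}a\le a^\alpha b^{1-\alpha}$ and $(1-\alpha)M^{-\alpha}b$ is exactly the second error term. When $M=\infty$ this term is $0$.
\end{itemize}
The case $m=0$ is covered by reading $c=m$ as a limit $c\to0^+$, where $m^{1-\alpha}=0$.

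\emph{Main obstacle.} The only delicate point is the boundary cases: $a$ or $b$ equal to $0$, $m=0$, $M=\infty$. These need the limiting interpretation of $c^*$ together with $0<\alpha<1$, so that $c^{1-\alpha}\to0$ as $c\to 0^+$ and $c^{-\alpha}\to0$ as $c\to\infty$.
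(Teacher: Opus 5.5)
Your proof is correct. The paper does not prove this lemma itself; it defers to \cite{delTeso_Rossi_2024}, so there is no in-text argument to compare against, and your proposal works as a self-contained proof.

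Your route is the natural one. You locate the unconstrained minimizer $c^*=b/a$; equivalently, you apply weighted AM--GM to $c^{1-\alpha}a$ and $c^{-\alpha}b$, where the powers of $c$ cancel. You then evaluate at the endpoint of $[m,M]$ closest to $c^*$, and bound the term that worsens by its value at $c^*$, using $c^{*\,-\alpha}b=c^{*\,1-\alpha}a=a^\alpha b^{1-\alpha}$. All three of your cases check out. You also correctly reduce the absolute value to a one-sided bound, since the restricted infimum can only exceed $a^\alpha b^{1-\alpha}$.

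Two small remarks on presentation.
\begin{enumerate}
\item In the degenerate cases your comparisons formally contain $\infty\cdot 0$, e.g.\ $c^{*\,-\alpha}b$ with $b=0$, or $c^{*\,1-\alpha}a$ with $a=0$. The inequalities hold trivially there because the term in question is $0$, but it is cleaner to say so.
\item The preliminary bound $f(m)\le m^{1-\alpha}a$ in the case $c^*<m$ is not needed and can be dropped.
\end{enumerate}

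If you want to avoid the case analysis on $c^*$ altogether, a clamping argument gives \eqref{error-3} in one stroke. For any $c>0$ let $\tilde c=\min\{\max\{c,m\},M\}$, which lies in $[m,M]\cap(0,\infty)$. Then
\[
\tilde c^{1-\alpha}\le \max\{c,m\}^{1-\alpha}\le c^{1-\alpha}+m^{1-\alpha},
\qquad
\tilde c^{-\alpha}\le \min\{c,M\}^{-\alpha}\le c^{-\alpha}+M^{-\alpha},
\]
so $f(\tilde c)\le f(c)+\alpha a m^{1-\alpha}+(1-\alpha)bM^{-\alpha}$. Taking the infimum over $c>0$ and using \eqref{infimum-1} yields the claim. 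This handles $a=0$, $b=0$, $m=0$ and $M=\infty$ uniformly, without any limiting interpretation of $c^*$.
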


	\paragraph*{Notation.} Finally, let us fix some notation that we will use trough the paper.
	\begin{itemize}
		\item  Remember that  we use the next notation for the signed power, $(a)^\beta = \sgn(a) |a|^\beta$. Moreover, we write the positive and the negative part as follows, $a^+=\max\left[a,0 \right]$ and $a^-=\max\left[ -a,0 \right]$.
		
		\item We write $u\in B(\RR^d)$ if $\sup_{\RR^d} u<\infty$. Notice that this differs from $\LL^\infty(\RR^d)$, where the essential supremum is considered instead of the supremum.
		
		\item We denote by $C_b(\RR^d)$ the space of bounded continuous functions on $\RR^d$, and by $UC_b(\RR^d)$ the space of bounded uniformly continuous functions on $\RR^d$.
		
		\item Finally, for $\lambda\in\RR_+$, the Hölder space $C^\lambda(\RR^d)$ denotes $C^{k,\alpha}(\RR^d)$, where $\lambda=k+\alpha$ with $k\in\NN$ and $\alpha\in(0,1]$.
		
		\item Given $f\in UC_b(\RR^d)$ we denote by $\Lambda_f$ to its modulus of continuity, that is, $\Lambda_f:[0,\infty)\rightarrow [0,\infty)$ is a nondecreasing continuous function with $\Lambda_f(0)=0$ and $\left|f(x)-f(y)\right|\leq \Lambda_f(|x-y|)$ for all $x,y\in \RR^d$. The modulus of continuity can be defined by
		\[
		\Lambda_f(\delta)\coloneqq \sup \{\,|f(x)-f(y)| : |x-y|\le \delta,\; x,y\in \RR^d \,\}.
		\]
	\end{itemize}
	
	
	\subsection{AMVF for the $p$-Laplacian and the doubly nonlinear parabolic equation}
	We will present here asymptotic expansions for both the $p$-Laplacian $\Delta_p$ and the doubly nonlinear parabolic equation $ (\partial_t u )^{p-1} - \plap u = 0$. To do so, let us observe first that, from \eqref{eq:plap-nplap} we have
	\[
	(\Delta_p \varphi)^{\frac{1}{p-1}}=|\gr \varphi|^\alpha \left(\plapN u\right)^{1-\alpha}, \quad \mbox{with} \quad  \alpha=\frac{p-2}{p-1}.
	\]
	
	The corresponding AMVF follows from the same heuristic ideas presented in Section  \ref{sec:heuristic}. More precisely, we use $M_\rho$ be the AMVF formula for the normalized $p$-Laplacian given in Theorem \ref{exp-plapN} and the numerical identity $a^\alpha b^{1-\alpha} = \inf_{c>0} \left\{ \alpha c^{1-\alpha} a + (1-\alpha)c^{-\alpha} b \right\}$ valid for positive $a,b\in \RR$ that appears in Lemma \ref{infimum}. With these tools at hand, we define the averaging operator
	\begin{equation}\label{operator-A}
		\begin{split}
			\A_\epsilon[\ph](x)=&\frac{1}{2}\max\left[ \inf_{c\in[m(\epsilon),M(\epsilon)]}\left\{ \alpha \sup_{B_{\epsilon^2 c^{1-\alpha}}(x)}\ph +(1-\alpha) M_{\epsilon c^{-\alpha/2}}[\ph](x) \right\}, \ph(x) \right]\\
			&+\frac{1}{2} \min\left[ \sup_{c\in[m(\epsilon),M(\epsilon)]}\left\{ \alpha \inf_{B_{\epsilon^2 c^{1-\alpha}}(x)}\ph +(1-\alpha) M_{\epsilon c^{-\alpha/2}}[\ph](x) \right\}, \ph(x) \right]
		\end{split}
	\end{equation} 
	with the choice
	\begin{equation}\label{m-M}\tag{m-M}
		m(\epsilon)=\epsilon^{\frac{2(p-1)}{3p-4}} \hspace{3mm}\mbox{and}\hspace{3mm}M(\epsilon)=\epsilon^{-2+\frac{2}{p}}.
	\end{equation}
	
	We have the following AMVF for the $p$-Laplace operator.
	\begin{thm}\label{thm.asexp}
		Let $d\geq 1$, $p>2$, $x\in\Rd$, $R\in(0,1)$, and assume \eqref{m-M}. Then, there exists $\beps\in(0,R)$ such that for all $\epsilon\in(0,\beps)$ the quantity \eqref{operator-A} is well defined for any bounded measurable function $\varphi$ in $B_{R}(x)$. Moreover, if $\ph\in C^2(B_{R}(x))$, then the following asymptotic expansions holds
		\begin{equation*}
			\A_\epsilon[\ph](x)=\ph(x)+\frac{\epsilon^2}{2}\left(\plap \ph(x)\right)^{\frac{1}{p-1}}+\epsilon^2 E(\epsilon, \ph, x).
		\end{equation*}
		We further have the following estimates for the error term $E(\epsilon, \ph, x)$:
		\begin{enumerate}[label=(\alph*), leftmargin=*]
			\item\label{thm.asexp-item-a} If $\ph\in C^2(B_{R}(x))$, then $E(\epsilon, \ph, x)=o_\epsilon(1)$ as $\epsilon\to0^+$. 
			\item\label{thm.asexp-item-b} If $\ph\in C^3(B_{R}(x))$ and  $\nabla\varphi(x)\not=0$,  there exists constants $k_1=k_1(d,p)>0$, $k_2=k_2(d,p)>0$ such that 
			\begin{equation*}\left|E(\epsilon,\ph,x)\right|\leq  k_1 \epsilon^{2-\frac{4}{p}}\|D^2\ph\|_{\LL^\infty(B_{\beps}(x))}
				+ k_2 \epsilon^{\frac{2}{3p-4}}\left(\|\gr\ph\|_{\LL^\infty(B_{\beps}(x))}+\|D^3\ph\|_{\LL^\infty(B_{\beps}(x))}+\frac{|D^2\ph(x)|^2}{|\gr\ph(x)|}\right).
			\end{equation*}
			\item\label{thm.asexp-item-c} If $\ph\in C^2(B_{R}(x))$, there exists a constant $k_3=k_3(d,p)>0$ such that
			\begin{equation*}\left|E(\epsilon,\ph,x)\right|\leq k_3 \left( \|D^2\ph\|_{\LL^\infty(B_{\beps}(x))}^{\frac{1}{p-1}}|\gr \ph(x)|^{\frac{p-2}{p-1}}+ \epsilon^{2-\frac{4}{p}} \|D^2\ph\|_{\LL^\infty(B_{\beps}(x))}+\epsilon^{\frac{2}{3p-4}}|\gr\ph(x)|\right).
			\end{equation*}
		\end{enumerate}
	\end{thm}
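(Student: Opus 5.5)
Well-definedness comes first. For $\epsilon$ small, every radius appearing in \eqref{operator-A} stays inside $B_R(x)$. The sup/inf radius is $\epsilon^2 c^{1-\alpha}\le \epsilon^2 M^{1-\alpha}$, and with $M=\epsilon^{-2+2/p}$ and $1-\alpha=\frac1{p-1}$ this is $\epsilon^{2-\frac{2(p-1)}{p(p-1)}}=\epsilon^{2-2/p}\to0$. The radius in $M_{\epsilon c^{-\alpha/2}}$ is $\gamma\epsilon c^{-\alpha/2}\le\gamma\epsilon m^{-\alpha/2}$, and since $\frac{\alpha}{2}\cdot\frac{2(p-1)}{3p-4}=\frac{p-2}{3p-4}<1$ this also tends to $0$. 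Hence all sups, infs and averages of a bounded measurable $\ph$ are finite, which fixes $\beps$.

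For the expansion, write $a=|\gr\ph(x)|$ and $b=\plapN\ph(x)$ when $\gr\ph(x)\neq0$. Using Lemma \ref{exp-gradient} with $r=\epsilon^2c^{1-\alpha}$ and Theorem \ref{exp-plapN} with $\rho=\epsilon c^{-\alpha/2}$ (so $\rho^2=\epsilon^2c^{-\alpha}$), the inner expression becomes
\[
\alpha\sup_{B_r}\ph+(1-\alpha)M_\rho[\ph]-\ph(x)=\epsilon^2\big(\alpha c^{1-\alpha}a+(1-\alpha)c^{-\alpha}b\big)+\epsilon^2\big(\alpha c^{1-\alpha}E_1+(1-\alpha)c^{-\alpha}E_2\big).
\]
The second bracket is the error term. $|c^{1-\alpha}E_1|\le k_1\epsilon^2c^{2(1-\alpha)}\|D^2\ph\|$ is at most of order $\epsilon^{2-4/p}$ when $c\le M$. $|c^{-\alpha}E_2|\lesssim \epsilon c^{-3\alpha/2}(\cdots)$ is at most of order $\epsilon\, m^{-3\alpha/2}=\epsilon^{1-\frac{3(p-2)}{3p-4}}=\epsilon^{\frac{2}{3p-4}}$. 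These bounds explain the choice \eqref{m-M}.

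Next I take inf over $c$ and split by sign. If $b\ge0$, Lemma \ref{infimum} gives $\inf_c=a^\alpha b^{1-\alpha}+O(a m^{1-\alpha}+bM^{-\alpha})$. Here $m^{1-\alpha}=\epsilon^{2/(3p-4)}$, which produces the $\epsilon^{2/(3p-4)}|\gr\ph|$ term, and $M^{-\alpha}$ is small. The max with $\ph(x)$ then produces $\epsilon^2(\Delta_p\ph)^{1/(p-1)}$ up to errors. In the second (min) bracket, the inner term for $b\ge 0$ is $\epsilon^2\sup_c\{-\alpha c^{1-\alpha}a+(1-\alpha)c^{-\alpha}b\}+\dots\ge 0$ (up to errors), so its min with $\ph(x)$ is $\ph(x)+$ error. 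Averaging the two brackets with weights $\tfrac12$ gives exactly $\tfrac{\epsilon^2}{2}(\Delta_p\ph)^{1/(p-1)}$. The case $b<0$ is symmetric via $\ph\mapsto-\ph$. Since max/min with a fixed number are 1-Lipschitz, errors pass through unchanged. This proves item (b).

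For $\gr\ph(x)=0$, or small gradient, Theorem \ref{exp-plapN} is not usable, and this is the main obstacle. I would not expand $M_\rho$ sharply; instead I would use the crude bound $|M_\rho[\ph](x)-\ph(x)-\rho^2 \cdot(\text{bounded})|\lesssim\rho^2\|D^2\ph\|$ together with $\sup_{B_r}\ph-\ph(x)\le ra+Cr^2\|D^2\ph\|$. With $a$ small, the inner quantity is $\epsilon^2(\alpha c^{1-\alpha}a+(1-\alpha)c^{-\alpha}\,O(\|D^2\ph\|))$. Minimizing over $c$ by Lemma \ref{infimum}, or bounding it above by picking the optimizing $c$ and below by $-$(same), gives $|\A_\epsilon[\ph](x)-\ph(x)|\lesssim\epsilon^2\big(a^\alpha\|D^2\ph\|^{1-\alpha}+\epsilon^{2-4/p}\|D^2\ph\|+\epsilon^{2/(3p-4)}a\big)$, with the last two terms coming from the truncation $c\in[m,M]$. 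The target term $(\Delta_p\ph)^{1/(p-1)}$ is also bounded by $C a^\alpha\|D^2\ph\|^{1-\alpha}$, so this yields item (c). Getting the lower bound needs some care: the inf over $c$ must not go to $-\infty$, and the truncation $c\ge m$ is exactly what prevents that.

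Item (a) then follows from (b) and (c). If $\gr\ph(x)\neq0$, I approximate by the $C^2$ expansions with $o(1)$ errors, uniformly for $c\in[m,M]$; the $o$-terms are uniform since $r,\rho\to0$ uniformly in $c$. If $\gr\ph(x)=0$, item (c) gives $E=O(\epsilon^{2-4/p}+0)\to0$.
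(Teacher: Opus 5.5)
Your well-definedness computation and your proofs of (b) and (c) follow the paper's route (Theorems \ref{well-def}, \ref{exp-c3}, \ref{exp-c3-grad}). Deducing the zero-gradient case of (a) from (c) is also fine. Your (c) argument in fact needs only $C^2$, since both $|E_1(r,\ph,x)|\le Cr\|D^2\ph\|$ and $|M_\rho[\ph](x)-\ph(x)|\le C\rho^2\|D^2\ph\|$ follow from Taylor's formula with a bounded Hessian. The gap is in item (a) at points where $\gr\ph(x)\neq0$ and $\ph$ is only $C^2$.

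In (b) the remainders enter additively, as $\epsilon^2 g(c)$ with $g(c)=\alpha c^{1-\alpha}E_1(\epsilon^2c^{1-\alpha},\ph,x)+(1-\alpha)c^{-\alpha}E_2(\epsilon c^{-\alpha/2},\ph,x)$. You pass them through $\inf_c$, $\sup_c$, $\max$ and $\min$ by 1-Lipschitz continuity, which requires $\sup_{c\in[m,M]}|g(c)|\to0$.

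Knowing that $E_1,E_2=o(1)$ uniformly in $c$ (because $r,\rho\to0$ uniformly) does not give this, because the weights are unbounded on $[m,M]$: $c^{1-\alpha}\le\epsilon^{-2/p}$ and $c^{-\alpha}\le m^{-\alpha}=\epsilon^{-\frac{2(p-2)}{3p-4}}$. The $E_1$ part is rescued by its $C^2$ rate, which gives $\sup_c c^{1-\alpha}|E_1|\lesssim\epsilon^{2-4/p}\|D^2\ph\|$. For a $C^2$ function, however, $E_2(\rho)$ has no rate; it is controlled only by the modulus of continuity of $D^2\ph$. So $c^{-\alpha}E_2$ near $c=m$ is of size $\epsilon^{-\frac{2(p-2)}{3p-4}}\,o(1)$ and need not vanish. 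Rerunning (b) with $o(1)$ errors therefore does not prove (a).

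The missing idea, used in Theorem \ref{exp-c2}, is to treat the remainders as relative perturbations of the coefficients $a=|\gr\ph(x)|$ and $b=\plapN\ph(x)$ inside the infimum, not as additive errors.
\begin{enumerate}
\item Fix $\delta>0$. For $\epsilon$ small, $|E_1|,|E_2|<\delta$ for every $c\in[m,M]$.
\item Hence the inner quantity divided by $\epsilon^2$ lies between $\alpha c^{1-\alpha}(a-\delta)+(1-\alpha)c^{-\alpha}(b-\delta)$ and the same expression with $+\delta$.
\item Take $\inf_c$ and apply Lemma \ref{infimum} to the perturbed coefficients, with the sign split on $b$ as in your (b).
\begin{itemize}
\item For $b>0$ and $\delta<b$, large values of $c^{-\alpha}$ only make the expression larger, so the infimum ignores them.
\item For $b<0$, the first infimum becomes negative (at $c=m$ it tends to $-\infty$), and the $\max$ with $\ph(x)$ removes it.
\item For $b=0$, the contribution is $O(\delta^{1-\alpha})$.
\end{itemize}
\item This yields $Q_\epsilon[\ph](x)=a^\alpha(b)^{1-\alpha}+o_\delta(1)+o_\epsilon(1)$. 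Letting $\epsilon\to0$ and then $\delta\to0$ gives (a).
\end{enumerate}

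A minor point on (c): the lower bound does not come from the truncation $c\ge m$. It comes from the $\max$/$\min$ with $\ph(x)$, which makes the first bracket nonnegative and bounds the second below by minus the infimum of a nonnegative expression.
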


	\begin{rem}\rm
		First, we observe that the error estimate in Theorem \ref{thm.asexp}\ref{thm.asexp-item-a} holds for $C^2$ functions regardless of whether the gradient vanishes or not. However, the estimate obtained there is not locally uniform in $x$. In contrast, the error estimate in Theorem \ref{thm.asexp}\ref{thm.asexp-item-b}, which requires $C^3$ regularity, is locally uniform away from points where the gradient vanishes. Nevertheless, this estimate degenerates as one approaches points where the gradient is zero. This is precisely why Theorem \ref{thm.asexp}\ref{thm.asexp-item-c} is needed, since it allows us to avoid such degeneracies.  These features are essential for the characterization of viscosity solutions and for the convergence of the DPP established later. See Remark \ref{rem-error} for more details.
	\end{rem}

	From the AMVF for the $p-$Laplacian we derive the following result regarding the doubly nonlinear parabolic equation.
	
	\begin{thm}\label{thm:AMVFparab-intro} Let $d\geq 1$, $p>2$, $x\in\Rd$, $t>0$, $R\in(0,1)$, $\tau\in(0,t)$, and assume \eqref{m-M}. Then, there exists $\beps\in(0,\min[R,\tau])$ such that for all $\epsilon\in(0,\beps)$ the quantity \eqref{operator-A} is well defined for any bounded measurable function $\varphi$ in $B_{R}(x)\times[t-\tau,t+\tau]$. Moreover, if $\ph\in C^2(B_{R}(x)\times[t-\tau,t+\tau])$, then the following asymptotic expansion holds
		\begin{equation*}
			\A_\epsilon[\ph](x,t)=\ph \Big(x,t+\frac{\epsilon^2}{2}\Big)+\frac{\epsilon^2}{2}\Big(\left(\plap \ph(x,t)\right)^{\frac{1}{p-1}}-\partial_t \ph(x,t)\Big)+\epsilon^2 \widehat{E}(\epsilon, \ph, x,t).
		\end{equation*}
		\begin{enumerate}[label=(\alph*), leftmargin=*]
			\item If $\ph\in C^2(B_{R}(x)\times[t-\tau,t+\tau])$, then $\widehat{E}(\epsilon, \ph, x,t)=o_\epsilon(1)$ as $\epsilon\to0^+$.
			\item If $\ph\in C^3(B_{R}(x)\times[t-\tau,t+\tau])$ and  $\nabla\varphi(x,t)\not=0$,  there exists constants $k_1=k_1(d,p)>0$, $k_2=k_2(d,p)>0$ such that 
			\begin{align*}\left|\widehat{E}(\epsilon,\ph,x,t)\right|\leq& \epsilon^2 \|\partial_{tt}\ph\|_{\LL^\infty([t-\beps^2/2,t+\beps^2/2])}+ k_1 \epsilon^{2-\frac{4}{p}}\|D^2\ph\|_{\LL^\infty(B_{\beps}(x))}\\
				&+ k_2 \epsilon^{\frac{2}{3p-4}}\left(\|\gr\ph\|_{\LL^\infty(B_{\beps}(x))}+\|D^3\ph\|_{\LL^\infty(B_{\beps}(x))}+\frac{|D^2\ph(x,t)|^2}{|\gr\ph(x,t)|}\right).
			\end{align*}
			\item If $\ph\in C^2(B_{R}(x)\times[t-\tau,t+\tau])$, there exists a constant $k_3=k_3(d,p)>0$ such that
			\begin{align*}&\left|\widehat{E}(\epsilon,\ph,x,t)\right|\leq \epsilon^2 \|\partial_{tt}\ph\|_{\LL^\infty([t-\beps^2/2,t+\beps^2/2])}\\
				&+ k_3 \left( \|D^2\ph\|_{\LL^\infty(B_{\beps}(x))}^{\frac{1}{p-1}}|\gr \ph(x,t)|^{\frac{p-2}{p-1}}+ \epsilon^{2-\frac{4}{p}} \|D^2\ph\|_{\LL^\infty(B_{\beps}(x))}+\epsilon^{\frac{2}{3p-4}}|\gr\ph(x,t)|\right).
			\end{align*}
		\end{enumerate}
	\end{thm}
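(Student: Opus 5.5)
The plan is to reduce the statement to Theorem \ref{thm.asexp}, applied at each fixed time slice. Here $\A_\epsilon[\ph](x,t)$ means the operator \eqref{operator-A} acting on the spatial function $y\mapsto\ph(y,t)$, with $t$ frozen. Well-definedness is inherited from Theorem \ref{thm.asexp}. For $\epsilon<\beps$, all the balls appearing in \eqref{operator-A} have radius at most $\max\{\epsilon^2 M^{1-\alpha},\gamma\epsilon m^{-\alpha/2}\}$, and this is $<R$ for $\beps$ small by \eqref{m-M}. Thus only values of $\ph(\cdot,t)$ on $B_R(x)$ are used. We additionally require $\beps<\tau$, and $\beps^2/2<\tau$, so that $t+\epsilon^2/2\in[t-\tau,t+\tau]$.

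The core step applies Theorem \ref{thm.asexp} to $\psi=\ph(\cdot,t)\in C^2(B_R(x))$, which gives
\[
\A_\epsilon[\ph](x,t)=\ph(x,t)+\frac{\epsilon^2}{2}\left(\plap\ph(x,t)\right)^{\frac{1}{p-1}}+\epsilon^2E(\epsilon,\ph(\cdot,t),x).
\]
Next, Taylor expand in time:
\[
\ph\Big(x,t+\frac{\epsilon^2}{2}\Big)=\ph(x,t)+\frac{\epsilon^2}{2}\partial_t\ph(x,t)+\frac{\epsilon^4}{8}\partial_{tt}\ph(x,s)
\]
for some $s\in(t,t+\epsilon^2/2)$. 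Substituting $\ph(x,t)=\ph(x,t+\epsilon^2/2)-\frac{\epsilon^2}{2}\partial_t\ph(x,t)-\frac{\epsilon^4}{8}\partial_{tt}\ph(x,s)$ into the spatial expansion yields exactly the claimed formula, with
\[
\widehat E(\epsilon,\ph,x,t)=E(\epsilon,\ph(\cdot,t),x)-\frac{\epsilon^2}{8}\partial_{tt}\ph(x,s).
\]

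The three error estimates then follow from the triangle inequality. The time part satisfies $\frac{\epsilon^2}{8}|\partial_{tt}\ph(x,s)|\le\epsilon^2\|\partial_{tt}\ph\|_{\LL^\infty([t-\beps^2/2,t+\beps^2/2])}$, with the norm understood at the point $x$. Item (a) is Theorem \ref{thm.asexp}\ref{thm.asexp-item-a} together with the boundedness of $\partial_{tt}\ph$ on the compact set. Items (b) and (c) are Theorem \ref{thm.asexp}\ref{thm.asexp-item-b} and \ref{thm.asexp-item-c} applied to $\psi=\ph(\cdot,t)$, with the spatial norms taken on $B_{\beps}(x)$ at time $t$.

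There is a small regularity subtlety. Item (a) needs $\partial_{tt}\ph$ to exist, so I read $C^2$ in space-time as including the second time derivative. If only $C^{2,1}$-type regularity were available, I would use $\ph(x,t+\epsilon^2/2)-\ph(x,t)=\frac{\epsilon^2}{2}\partial_t\ph(x,t)+o(\epsilon^2)$ by the mean value theorem and the continuity of $\partial_t\ph$; this still suffices for (a).

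The main obstacle is essentially bookkeeping: verifying that the threshold $\beps$ from Theorem \ref{thm.asexp} can be chosen uniformly in the frozen time variable. Since the only condition on $\beps$ there is that the radii fit inside $B_R(x)$, and this depends only on $\epsilon,p,d,R$, it is uniform in $t$. No genuinely new estimate is required.
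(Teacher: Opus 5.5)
Your proposal is correct and takes essentially the paper's route: the paper proves the parabolic items of Theorems \ref{exp-c2}, \ref{exp-c2-grad}, \ref{exp-c3} and \ref{exp-c3-grad} the same way, by applying the elliptic expansion at the frozen time $t$, subtracting a Taylor expansion of $\ph(x,\cdot)$, and absorbing a remainder of order $\epsilon^2\|\partial_{tt}\ph\|$ into $\widehat{E}$. The uniformity in the frozen time that you flag as the main obstacle is not actually needed, since $(x,t)$ is fixed in the statement.
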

	
	
	\subsection{Characterization of viscosity solutions}

	The results in the previous section allow to provide asymptotic mean value characterizations of viscosity solutions to the doubly nonlinear parabolic equation in the range $p\in(2,\infty)$.
	
	\begin{thm}\label{thm:ascharac}
		Let $d\geq 1$, $p>2$, and $\Omega\subseteq \Rd$ be an open set and assume \eqref{m-M}. Then, a continuous function $u$ is a viscosity solution of
		\begin{equation*}
			(\partial_t u(x,t))^{p-1}=\plap u(x,t)\quad \mbox{for all} \quad (x,t)\in \Omega\times (0,T),
		\end{equation*}
		if and only if $u$ is a viscosity solution of 
		\[
		\A_\epsilon[\ph](x,t)=\ph(x,t+\epsilon^2/2) + o(\epsilon^2) \quad \textup{as} \quad \epsilon\to0^+\quad \mbox{for all} \quad (x,t)\in \Omega\times (0,T).
		\]
	\end{thm}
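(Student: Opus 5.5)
The plan is to compare the two viscosity notions test function by test function, using Theorem \ref{thm:AMVFparab-intro}(a) as the only analytic input. Throughout I read $(\partial_t u)^{p-1}=\plap u$ with the signed power, so the equation is equivalent to $\partial_t u=(\plap u)^{\frac1{p-1}}$; the map $s\mapsto s^{\frac1{p-1}}$ is a strictly increasing bijection of $\RR$, so sub- and supersolution inequalities transfer between the two forms.

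I will use the standard definitions. The function $u$ is a viscosity supersolution of the PDE if, for every $(x_0,t_0)$ and every $\ph\in C^2$ touching $u$ from below at $(x_0,t_0)$ (strictly, in a neighbourhood), we have
\[
(\partial_t\ph(x_0,t_0))^{p-1}\ge\plap\ph(x_0,t_0),
\]
and symmetrically for subsolutions. The function $u$ is a viscosity supersolution of the asymptotic formula if, for the same class of test functions,
\[
\ph(x_0,t_0+\epsilon^2/2)-\A_\epsilon[\ph](x_0,t_0)\ge o(\epsilon^2),
\]
that is, $\liminf_{\epsilon\to0^+}\epsilon^{-2}\big(\ph(x_0,t_0+\epsilon^2/2)-\A_\epsilon[\ph](x_0,t_0)\big)\ge0$. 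If at critical points the paper uses an envelope definition in the spirit of Ohnuma–Sato or Juutinen–Kawohl, I would adopt it. I expect it to be unnecessary here, because for $p>2$ the operator $(\plap\ph)^{\frac1{p-1}}=|\gr\ph|^{\alpha}(\plapN\ph)^{1-\alpha}$ extends continuously by $0$ when $\gr\ph=0$. Part (a) of Theorem \ref{thm:AMVFparab-intro} holds with no restriction on the gradient, which is exactly what allows this.

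The core step is the identity obtained from Theorem \ref{thm:AMVFparab-intro}(a) at $(x_0,t_0)$:
\[
\frac{\ph(x_0,t_0+\epsilon^2/2)-\A_\epsilon[\ph](x_0,t_0)}{\epsilon^2}=\frac12\Big(\partial_t\ph(x_0,t_0)-(\plap\ph(x_0,t_0))^{\frac1{p-1}}\Big)-\widehat E(\epsilon,\ph,x_0,t_0),
\]
with $\widehat E=o_\epsilon(1)$. The left side therefore converges to $\tfrac12\big(\partial_t\ph-(\plap\ph)^{\frac1{p-1}}\big)(x_0,t_0)$, so its liminf is nonnegative if and only if $\partial_t\ph\ge(\plap\ph)^{\frac1{p-1}}$ at $(x_0,t_0)$. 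By monotonicity of the signed power this is equivalent to $(\partial_t\ph)^{p-1}\ge\plap\ph$. The same argument with reversed inequalities handles subsolutions. Both implications of the theorem follow at once, because the test function classes coincide.

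The remaining details are minor. First, $\ph$ need only be $C^2$ in a neighbourhood $B_R(x_0)\times[t_0-\tau,t_0+\tau]$, which is what Theorem \ref{thm:AMVFparab-intro} requires. Second, $\A_\epsilon[\ph]$ only needs $\ph$ to be defined and bounded near $(x_0,t_0)$ once $\epsilon<\beps$, so localizing the test function is harmless. Third, strict versus non-strict touching, and touching with $\ph(x_0,t_0)=u(x_0,t_0)$, do not affect either definition, since $\A_\epsilon$ commutes with adding constants.

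The only point needing care is the characteristic set $\gr\ph(x_0,t_0)=0$. There the formula still yields $(\plap\ph)^{\frac1{p-1}}=0$ because $p>2$. If the paper's definition of viscosity solution of the PDE treats $\gr\ph=0$ specially, for instance by requiring only $\partial_t\ph\ge0$ in the supersolution case, this agrees with the value $0$ above, and the same computation applies unchanged.
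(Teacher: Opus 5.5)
Your proposal is correct and follows essentially the paper's route. As in Corollaries \ref{characterizacion-1} and \ref{characterizacion-2}, the equivalence reduces, test function by test function, to the pointwise expansion at the touching point, and your appeal to the $C^2$ expansion of Theorem \ref{thm:AMVFparab-intro}(a), which is valid also where $\gr\ph=0$, actually fits the $C^2$ test functions of the viscosity definitions better than the paper's use of the $C^3$ estimates in Theorems \ref{exp-c3} and \ref{exp-c3-grad}.
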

	
	The precise definitions of viscosity solution in the above result will be given in Appendix \ref{ap:viscosity}.
	
	Similar results to the ones in Theorem \ref{thm:ascharac} can be obtained for elliptic equations of the form $-\Delta_p u=f$. The precise statement and its proof follow similarly to the ones in \cite{delTeso_Rossi_2024} and hence we omit them here.
	
	
	\subsection{DPP for the doubly nonlinear parabolic equation in bounded domains}

	We present here a dynamic programming principle (DPP) (that is related to a game--theoretical interpretation) for the following doubly nonlinear Dirichlet problem, extensively studied in \cite{Lindgren_Hynd_2016}:
	\begin{equation}\label{problem-intro-1}
		\left\{\begin{array}{rlll}
			\left( \partial_t u(x,t) \right)^{p-1} &=& \plap u(x,t), & \mbox{in } \Omega\times (0,\infty),\\ [4pt]
			u(x,0)&=&u_0(x), & \mbox{in } \Omega,\\ [4pt]
			u(x,t)&=&g(x), & \mbox{on } \partial \Omega\times (0,\infty).
		\end{array}\right.
	\end{equation}
	
	We assume that $\Omega\subset \Rd$ is a bounded domain, $p\in(2,\infty)$, and that the data belong to spaces of bounded continuous functions, namely $u_0\in C_b(\overline{\Omega})$ and $g\in C_b(\Rd\setminus\Omega)$
	(we will assume that the boundary datum is extended to a function defined in $\Rd\setminus\Omega$. With an abuse of notation we also call $g$ to this extension).
	
	The AMVF given in Theorem \ref{thm:AMVFparab-intro} allows us to derive the following DPP problem associated with \eqref{problem-intro-1}:
	\begin{equation}\label{DPP-main-1}
		\left\{\begin{array}{clll}
			\ue(x,t)&=&\A_\epsilon[\ue](x,t-\epsilon^2/2), & \mbox{in } \Omega\times (0,\infty),\\ [4pt]
			\ue(x,t)&=&u_0(x), & \mbox{in } \Omega\times (-\epsilon^2/2,0],\\ [4pt]
			\ue(x,t)&=&g(x), & \mbox{in } \Rd\setminus \Omega\times (-\epsilon^2/2,\infty).
		\end{array}\right.
	\end{equation}
	
	As will be see later, \eqref{DPP-main-1} can be reinterpreted as a explicit Euler scheme associated with problem \eqref{problem-intro-1}. In fact, the following result establishes a precise connection between the two problems.
	
	\begin{thm}\label{thm-DPPI}
		Let $d\ge 1$, $p>2$, $\epsilon\in(0,1)$, and let $\Omega\subset\Rd$ be a bounded domain satisfying the uniform exterior ball condition. Let $u_0\in C_b(\Omega)$ and $g\in C_b(\Rd\setminus\Omega)$, and assume that \eqref{m-M} holds. Then the following hold:
		\begin{enumerate}[label=(\alph*), leftmargin=*]
			\item There exists a unique bounded measurable solution $\ue$ to \eqref{DPP-main-1}.
			
			\item There exists a function $u\in C(\overline{\Omega}\times[0,\infty))$ such that
			\[
			u_\epsilon \to u 
			\quad \textup{locally uniformly in } \overline{\Omega}\times[0,\infty)
			\quad \textup{as } \epsilon \to 0^+ .
			\]
			The limit $u$ is the unique viscosity solution of \eqref{problem-intro-1}.
		\end{enumerate}
	\end{thm}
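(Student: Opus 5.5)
Part (a) follows from the explicit time-marching structure of \eqref{DPP-main-1}. Since $\A_\epsilon[\ue](x,t-\epsilon^2/2)$ only involves values of $\ue$ at the time $t-\epsilon^2/2$, we define $\ue$ recursively on the strips $\Omega\times(k\epsilon^2/2,(k+1)\epsilon^2/2]$, $k\ge 0$. On each strip $\ue$ is obtained by applying $\A_\epsilon$ to the already constructed bounded measurable function on the previous strip, extended by $g$ outside $\Omega$. Well-definedness for bounded measurable data is given by Theorem~\ref{thm:AMVFparab-intro}. The remaining checks are that $\A_\epsilon$ preserves boundedness, since it is a combination of sups, infs and averages, and preserves measurability. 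Measurability needs a little care: the inf/sup over $c\in[m,M]$ can be reduced to rational $c$, because each bracket is monotone or continuous in the radius, or else one works with upper/lower semicontinuous envelopes. Uniqueness is immediate by induction on $k$. Along the way we record the two structural properties of $\A_\epsilon$ that we need:
\begin{itemize}
\item[(i)] monotonicity, $\ph\le\psi\Rightarrow\A_\epsilon[\ph]\le\A_\epsilon[\psi]$;
\item[(ii)] commutation with constants, $\A_\epsilon[\ph+c]=\A_\epsilon[\ph]+c$.
\end{itemize}
These give the comparison principle for the DPP and the bound $\min(\inf u_0,\inf g)\le\ue\le\max(\sup u_0,\sup g)$.

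For (b) we use the Barles--Souganidis strategy in its variant with an asymptotic Arzelà--Ascoli lemma, since $\ue$ is discontinuous.

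\textbf{Step 1: consistency.} This is Theorem~\ref{thm:AMVFparab-intro}, together with the characterization in Theorem~\ref{thm:ascharac}.

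\textbf{Step 2: stability.} This is the uniform bound above.

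\textbf{Step 3: asymptotic equicontinuity.} We need: for every $\eta>0$ there exist $r_0,\epsilon_0$ such that $|\ue(x,t)-\ue(y,s)|<\eta$ whenever $|x-y|+|t-s|<r_0$ and $\epsilon<\epsilon_0$. This reduces, via translation invariance, (i) and (ii), to estimates near the initial time and near $\partial\Omega$.

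Near $t=0$ we first regularize $u_0$ (and $g$) and compare with barriers of the form $u_0^\delta(x)\pm Ct$. Consistency with bounded constant $C$ depending on $\|D^2u_0^\delta\|$ follows from Theorem~\ref{thm:AMVFparab-intro}(c), which stays valid where $\nabla u_0^\delta$ vanishes. Near $\partial\Omega$ we use the uniform exterior ball condition. At $y\in\partial\Omega$ with exterior ball $B_\delta(z)$ we build a radial barrier $v(x)=g(y)+\eta+K(\delta^{-\kappa}-|x-z|^{-\kappa})$, for suitable $\kappa$ and large $K$. For this barrier $\plap v<0$ strictly and $\nabla v\ne0$ in $\Omega$, so Theorem~\ref{thm.asexp}(b) gives $\A_\epsilon[v]\le v$ uniformly for small $\epsilon$. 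Hence $v$ is a time-independent supersolution of the DPP dominating $\ue$; the symmetric construction gives a subsolution. The main obstacle is this step. The error terms in Theorem~\ref{thm.asexp} must be uniform over the whole domain, and the ball radii $\epsilon^2c^{1-\alpha}$ and $\gamma\epsilon c^{-\alpha/2}$ with $c\in[m(\epsilon),M(\epsilon)]$ must shrink to $0$. With \eqref{m-M}, $\epsilon c^{-\alpha/2}\le\epsilon\, m^{-\alpha/2}\to0$ and $\epsilon^2M^{1-\alpha}\to0$, so the barrier argument is local, but this must be checked.

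\textbf{Step 4: convergence.} Interior equicontinuity in space and time then follows by comparing $\ue$ with its own translates, using the boundary and initial estimates. The asymptotic Arzelà--Ascoli lemma yields a subsequence converging locally uniformly to some $u\in C(\overline\Omega\times[0,\infty))$ attaining the data. Passing to the limit in the DPP with smooth test functions, via Theorem~\ref{thm:AMVFparab-intro}, shows that $u$ is a viscosity solution. At points where the test function has vanishing gradient we rely on part (c): the error there is controlled by $|\nabla\ph|^{(p-2)/(p-1)}$, which tends to $0$ along the approximating points. The comparison principle for \eqref{problem-intro-1} from \cite{Lindgren_Hynd_2016} gives uniqueness of $u$, and hence convergence of the whole family.
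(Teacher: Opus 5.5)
Your proposal is correct, but part (b) takes a genuinely different route from the paper. Part (a) is the same explicit time-marching construction the paper uses.

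\textbf{How the paper does (b).} The paper never proves equicontinuity of $\ue$. It works with the half-relaxed limits $\ua=\limsup\ue$ and $\ub=\liminf\ue$. It gets the boundary and initial inequalities pointwise, from barriers built at each fixed point (Lemmas~\ref{bound-g} and~\ref{bound-u0}). Using quasi-maximum points and the uniform error bounds, it shows that $\ua$ is a viscosity subsolution and $\ub$ a supersolution. The comparison principle of Lindgren--Hynd then gives $\ua\le\ub$, hence convergence of the whole family in one stroke.

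\textbf{How your route differs.} You run the compactness scheme: uniform barriers, comparison with space and time translates, asymptotic Arzel\`a--Ascoli, identification of subsequential limits, and uniqueness. This is essentially what the paper does only for $\Rd$ in Section~\ref{sec5}. Your route costs more in two places. The boundary barriers need constants uniform in $y\in\partial\Omega$, which the uniform exterior ball condition provides. Comparing $\ue$ with its translates on $\Omega\cap(\Omega-h)$ needs a version of Theorem~\ref{comparison} with time-dependent exterior values. In exchange you obtain an $\epsilon$-independent modulus of continuity for $\ue$. You also need uniqueness only among continuous solutions, whereas the paper needs a comparison principle for merely semicontinuous sub- and supersolutions.

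\textbf{Compatibility.} Your stationary barrier $g(y)+\eta+K(\delta^{-\kappa}-|x-z|^{-\kappa})$ lies above $u_0$ near $y$ only if $u_0\le g(y)+\eta$ there. Your initial barriers also need $u_0$ and $g$ to glue continuously. So you are using the compatibility condition $u_0=g$ on $\partial\Omega$. This is harmless, since it is necessary for a limit in $C(\overline\Omega\times[0,\infty))$ anyway, but you should state it. The paper's barrier contains the time-dependent factor $(-t^2/T^2+2t/T-1)(\|u_0\|_\infty+\|g\|_\infty)$, which lets it avoid compatibility at positive times.

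\textbf{Measurability.} Reducing the infimum over $c$ to rational $c$ is not automatic. The map $r\mapsto\sup_{B_r(x)}$ is only left-continuous, and the two radii move in opposite directions as $c$ varies, so the bracket can dip at an isolated $c$. Measurability still holds, because the bracket is jointly Borel in $(c,x)$ and its infimum over $c$ is universally measurable. The paper itself does not address this point; it defers it to an earlier reference.
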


	
	\subsection{Game theoretical interpretation for the doubly nonlinear parabolic equation}\label{Sec2.5}
	
	The asymptotic expansion for $p$-harmonic functions introduced in \cite{Manfredi_Parviainen_Rossi_2010} (see Theorem~\ref{exp-plapN}) yields a game-theoretical interpretation of $p$-harmonic functions via the so-called \emph{tug-of-war with noise}, a two-player zero-sum game whose rules are described below. 
	
	\textbf{Description of the tug-of-war with noise for $p$-harmonic functions.}
	Fix a parameter $\rho>0$, an open bounded domain $\Omega$, a starting point $x_0\in\Omega$, and a  final payoff function $g\in C_b(\Rd\setminus \Omega)$.
	\begin{enumerate}[label=(\roman*)]
		\item At each round, a biased coin is tossed with probabilities $\beta=(p-2)/(p+d)$ for heads and $1-\beta$ for tails.
		\begin{itemize}[leftmargin=1em]
			\item If the outcome of the coin toss is tails, the next position of the game is chosen randomly inside the ball of radius $\gamma\rho$ (with uniform distribution).
			\item If the outcome is heads, a round of tug-of-war is played: a fair coin
			(probabilities $1/2-1/2$) determines which of the two players chooses the next position of the game inside the ball of radius $\gamma\rho$.
		\end{itemize}
		
		\item The procedure described in (i) is repeated starting at the new position until the position of the game exits $\Omega$ for the first time; this exit point is denoted by $x_\tau$. At that moment, the second player pays the amount $g(x_\tau)$ to the first player.
	\end{enumerate}

	This previous  construction also allows us to provide a game-theoretical interpretation of the doubly nonlinear parabolic equation under study through the DPP \eqref{problem-intro-1}. The game again involves two players: Player~$I$, who aims to maximize the final payoff, and Player~$II$, who aims to minimize it.
	
	\textbf{Description of the new game for the doubly nonlinear parabolic equation.}
	Fix $\epsilon>0$, a bounded open domain $\Omega$, a starting space-time position $(x_0,t_0)\in\Omega\times (0,\infty)$, and two final payoff functions $u_0\in C_b(\overline{\Omega})$ and $g\in C_b(\Rd\setminus \Omega)$.
	
	\begin{enumerate}[label=(\roman*)]
		
		\item[(I)] At each round, a fair coin is tossed with probability $1/2$ for heads and $1/2$ for tails.
		
		\item[(II)(a)] If the outcome of (I) is heads, Player $I$ may either keep the spatial position $x_{k+1}=x_k$ and update time as $t_{k+1}=t_k-\epsilon^2/2$, or initiate the following procedure: Player $II$ selects a parameter $c\in[m(\epsilon),M(\epsilon)]$, after this choice is made, a biased coin is tossed with probabilities $\alpha=(p-2)/(p-1)$ for heads and $1-\alpha$ for tails:
		\begin{itemize}[leftmargin=1em]
			\item If heads occurs, Player $I$ chooses $x_{k+1}\in B_{\epsilon^2 c^{1-\alpha}}(x_k)$ and $t_{k+1}=t_k-\epsilon^2/2$.
			\item If tails occurs, a round of tug-of-war with noise (as described above) is played in $B_{\gamma\epsilon c^{-\alpha/2}}(x_k)$ (with probabilities $\beta$ and $1-\beta$ as described above), which determines $x_{k+1}$, and time is also changed to $t_{k+1}=t_k-\epsilon^2/2$.
		\end{itemize}
		
		\item[(II)(b)] If the outcome of (I) is tails, the same procedure as in (II)(a) is followed, but with the roles of Player $I$ and Player $II$ interchanged
		(this time it is Player $I$ who chooses $c$,...).
		
		\item[(III)] The procedure in (I) and (II) is repeated until the end of the game
		that happens when the position $x_\tau$ leaves $\Omega$ (with $t_\tau>0$) or
		when time is exhausted, $t_\tau \leq 0$. The final payoff for a particular occurrence of the game is given by: if $t_{k+1}\le 0$ with $x_{k+1}\in\Omega$, Player $II$ pays $u_0(x_{k+1})$ to Player $I$; if $x_{k+1}\notin\Omega$ with $t_{k+1}>0$, Player $II$ pays $g(x_{k+1})$ to Player $I$.
	\end{enumerate}
	
	For this game we have the following result that gives the connection 
	with the solution to the DPP and the parabolic problem \eqref{problem-intro-1}.
	
	\begin{thm} \label{teo.converge.bounded}
		Let the assumptions of Theorem \ref{thm-DPPI} hold, and consider the game described above by $(I)$, $(II)(a)$, $(II)(b)$ and $(III)$. Then, the game has a value, and it is given by the unique solution $\ue$ of the dynamic programming principle \eqref{DPP-main-1}.
		
		In particular, the value of the game converges to the viscosity solution of the parabolic problem \eqref{problem-intro-1}.
	\end{thm}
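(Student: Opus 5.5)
We follow the standard scheme for tug-of-war games, as in \cite{MPRparabolico} and \cite{libroBR}. The new feature is that the game has a finite horizon: every move lowers time by $\epsilon^2/2$. Because of this, the game ends after at most $N=\lceil 2t_0/\epsilon^2\rceil+1$ rounds, almost surely and deterministically for every pair of strategies. This removes the delicate issue of termination and of the non-uniqueness of values that appears in the elliptic case.

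\textbf{Step 1: the value functions.} We define the lower and upper values
\[
u_I^\epsilon(x_0,t_0)=\sup_{S_I}\inf_{S_{II}}\mathbb{E}^{(x_0,t_0)}_{S_I,S_{II}}[F(x_\tau,t_\tau)],\qquad u_{II}^\epsilon(x_0,t_0)=\inf_{S_{II}}\sup_{S_I}\mathbb{E}^{(x_0,t_0)}_{S_I,S_{II}}[F(x_\tau,t_\tau)],
\]
where $F$ equals $u_0$ or $g$ according to the rule (III). Since the game stops in at most $N$ steps, these expectations are well defined for any Borel strategies, and $u_I^\epsilon\le u_{II}^\epsilon$ trivially.

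We then check that one round of the game reproduces $\A_\epsilon[\cdot](x,t-\epsilon^2/2)$ exactly. Conditioning on the fair coin (I) gives the weights $\frac12,\frac12$. Under heads, Player $I$ chooses between staying (value $\ph(x,t-\epsilon^2/2)$) and launching the procedure; this gives the $\max$. Inside the procedure, Player $II$ picks $c$, which gives the $\inf_c$. The $\alpha$-coin then gives $\alpha\sup_{B_{\epsilon^2c^{1-\alpha}}}$ plus $(1-\alpha)M_{\epsilon c^{-\alpha/2}}$, where $M_\rho$ is exactly the one-step expectation of tug-of-war with noise from Theorem \ref{exp-plapN}. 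Under tails the roles are symmetric, which produces the $\min/\sup/\inf$ term in \eqref{operator-A}.

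\textbf{Step 2: $u_{II}^\epsilon\le u_\epsilon$.} Let $u_\epsilon$ be the unique bounded solution of \eqref{DPP-main-1} given by Theorem \ref{thm-DPPI}(a). Fix $\eta>0$ and let Player $II$ play an $\eta 2^{-k}$-optimal strategy for the DPP at step $k$. This means choosing a near-optimal $c$, a near-infimum point in the tug-of-war balls and in the balls $B_{\epsilon^2c^{1-\alpha}}$, and declining to launch the procedure when that is better for $II$ under tails. Measurable near-optimal selections exist because $u_\epsilon$ is bounded and Borel, so countable approximations suffice. Against any strategy of $I$, the process $M_k=u_\epsilon(x_k,t_k)-\eta 2^{-k}$ is a supermartingale; the key one-step inequality comes from the DPP identity above. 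Because $\tau\le N$ is bounded, optional stopping applies directly and gives
\[
\mathbb{E}[F(x_\tau,t_\tau)]\le u_\epsilon(x_0,t_0)+\eta .
\]
The boundary and initial conditions of \eqref{DPP-main-1} make $u_\epsilon(x_\tau,t_\tau)=F(x_\tau,t_\tau)$. Taking the infimum over $S_{II}$ and letting $\eta\to0$ yields $u_{II}^\epsilon\le u_\epsilon$.

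\textbf{Step 3: $u_I^\epsilon\ge u_\epsilon$, and conclusion.} The symmetric argument applies: $I$ plays near-optimally, giving a submartingale and $u_I^\epsilon\ge u_\epsilon$. Hence $u_I^\epsilon=u_{II}^\epsilon=u_\epsilon$, so the game has a value. The convergence statement then follows immediately from Theorem \ref{thm-DPPI}(b).

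\textbf{Main obstacle.} The hardest point is the measurability and the nested structure of the one-step choices. The $\inf$ over $c$ is taken after the decision to launch the procedure, and Player $II$ must pick $c$ near-optimally while $I$'s subsequent ball choice depends on $c$. We would handle this by discretizing $c\in[m,M]$ on a countable dense set, using that $c\mapsto$ the bracket in \eqref{operator-A} is lower/upper semicontinuous: $\sup$ over open balls is lower semicontinuous in the radius, and $M_\rho$ is continuous in $\rho$. This makes the near-optimal selections Borel.
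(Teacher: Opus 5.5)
Your proposal is correct and follows essentially the same route as the paper's proof: $\eta 2^{-k}$-optimal strategies for each player read off from the DPP, a sub/supermartingale built from $u_\epsilon(x_k,t_k)$, optional stopping (the stopping time is bounded because time drops by $\epsilon^2/2$ each round), the trivial inequality $u_I^\epsilon\le u_{II}^\epsilon$, and then the convergence of the DPP solutions. There are two small slips: in Step 2 the supermartingale must be $M_k=u_\epsilon(x_k,t_k)+\eta 2^{-k}$ (as your final bound $u_\epsilon(x_0,t_0)+\eta$ already indicates), and in your measurability remark $M_\rho[u_\epsilon]$ is not continuous in $\rho$ when $u_\epsilon$ is discontinuous, since sup and inf over open balls are only left-continuous in the radius, so the countable-discretization argument does not work as stated---though the paper does not address measurability of strategies at all.
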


	
	\subsection{DPP for the doubly nonlinear parabolic equation in $\RR^d$}
	
	Analogous results can also be obtained for the parabolic problem posed in the whole space
	\begin{equation}\label{problemII.intro}
		\left\{
		\begin{array}{rlll}
			(\partial_t u(x,t))^{p-1} &=&\plap u(x,t),
			& \mbox{in } \Rd \times (0,\infty),\\ [4pt]
			u(x,0)&=&u_0(x), & \mbox{in } \Rd.
		\end{array}
		\right.
	\end{equation}
	The corresponding DPP associated with this problem, derived from the operator~\eqref{operator-A}, is given by
	\begin{equation}\label{DPPII.intro}
		\left\{
		\begin{array}{clll}
			\ue(x,t)&=&\A_\epsilon[\ue](x,t-\epsilon^2/2), & \mbox{in } \Rd \times (0,\infty),\\ [4pt]
			\ue(x,t)&=&u_0(x), & \mbox{in } \Rd\times (-\epsilon^2/2,0].
		\end{array}
		\right.
	\end{equation}
	
	The convergence of the solutions of \eqref{DPPII.intro} to a viscosity solution of \eqref{problemII.intro} is established in Section~\ref{sec5}. 
	Since, to the best of our knowledge, uniqueness for this problem in the whole space remains open, the limit obtained through the DPP provides only a particular viscosity solution. Moreover, this limit solution inherits several regularity properties from the solutions to the DPP. Finally, we point out that we have convergence 
	of solutions to the DPP only via compactness arguments and hence we can only assert convergence along subsequences $u_{\epsilon_j} \to u$
	as $\epsilon_j \to 0$.
However,
	if we assume an exponential decay at infinity of the initial datum, we can show
	convergence of the whole family $u_\epsilon$. 
	
	\begin{thm} \label{teo.conver.Rd}
		Let $d\in\NN$, $p>2$ and $\epsilon\in(0,1)$. Let $u_0\in C_b^\lambda(\Rd)$ for some $\lambda\in(0,2]$, and assume that \eqref{m-M} holds. Then the following hold:
		\begin{enumerate}[label=(\alph*), leftmargin=*]
			
			\item There exists a unique bounded measurable solution $\ue$ to \eqref{DPPII.intro}.
			
			\item There exists a convergent subsequence $\{u_{\epsilon_j}\}_{j=1}^\infty$ with $\epsilon_j\to0$ as $j\to+\infty$ such that
			\[
			u_{\epsilon_j} \to u 
			\quad \textup{locally uniformly in } \Rd\times[0,\infty)
			\quad \textup{as } j\to+\infty,
			\]
			for some function $u \in UC(\RR^d\times [0,\infty))$.
			
			\item Every uniform limit along subsequences of $u_{\epsilon}$, $u$ is a viscosity solution to \eqref{problemII.intro} and satisfies the following estimates:
			\begin{itemize}[leftmargin=0.05em]
				\item[] \textup{(Boundedness)} $
				\|u(\cdot,t)\|_{\LL^{\infty}(\RR^d)}
				\leq
				\|u_{0}\|_{\LL^{\infty}(\RR^d)}$,
				for all  $t\in [0,\infty)$.

				\item[] \textup{(Regularity in space)} $\|u(\cdot+y,t)-u(\cdot,t)\|_{\LL^{\infty}(\Rd)}
				\leq
				\|u_{0}(\cdot+y)-u_{0}\|_{\LL^{\infty}(\Rd)}$, for all $(y,t)\in \RR^d\times[0,\infty)$.
				
				\item[] \textup{(Regularity in time)} $\|u(\cdot,t)-u(\cdot,s)\|_{\LL^\infty(\Rd)}
				\leq
				C \|u_0\|_{C^\lambda(\Rd)} |t-s|^{\lambda/2}$,
				for all  $t,s\in [0,\infty)$.
			\end{itemize}
			
			\item In addition, if the initial condition satisfies
			\begin{equation}\label{exp.decay.u0}
				|u_0(x)| \leq C e^{-|x|},
			\end{equation}
			then, the whole family $\{u_{\epsilon}\}_{\epsilon>0}$ converges as $\epsilon \to 0^+$. More precisely, for every $T>0$,
			\[
			u_{\epsilon} \to u 
			\quad \text{uniformly in } \Rd\times[0,T]
			\quad \text{as } \epsilon \to 0^+,
			\]
			for some function $u \in UC(\Rd\times [0,\infty))$ which is a viscosity solution of \eqref{problemII.intro}.
		\end{enumerate}
	\end{thm}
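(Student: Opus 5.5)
Part (a) follows from the structure of \eqref{DPPII.intro} as an explicit time-stepping scheme. Write $t\in(k\epsilon^2/2,(k+1)\epsilon^2/2]$. Then $\ue(\cdot,t)$ is obtained by applying $\A_\epsilon$ to $\ue(\cdot,t-\epsilon^2/2)$, and the latter is already determined, because it lies in the previous time strip. Induction on $k$, starting from the datum $u_0$ on $(-\epsilon^2/2,0]$, gives existence and uniqueness. Measurability and boundedness at each step are checked directly: $\A_\epsilon$ maps bounded functions to bounded functions, and the sup/inf over balls of bounded measurable functions are handled as in the well-definedness part of Theorem \ref{thm.asexp}. Because the DPP is posed in the whole space, no boundary issues arise.

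Next I record the basic properties of $\A_\epsilon$, namely monotonicity, commutation with constants ($\A_\epsilon[\ph+C]=\A_\epsilon[\ph]+C$) and translation invariance in $x$. These are immediate from \eqref{operator-A}, since every building block (sup, inf, averages, $\max$/$\min$ with $\ph(x)$, and inf/sup over $c$) has them. Three consequences follow.
\begin{itemize}
\item Boundedness: $\|\ue(\cdot,t)\|_\infty\le\|u_0\|_\infty$, by comparison with constants.
\item Regularity in space: $\ue(\cdot+y,\cdot)$ solves the same DPP with datum $u_0(\cdot+y)$. Combining the $L^\infty$-contraction that follows from monotonicity and constant-commutation gives $\|\ue(\cdot+y,t)-\ue(\cdot,t)\|_\infty\le\|u_0(\cdot+y)-u_0\|_\infty$.
\item Regularity in time: the same contraction reduces the estimate to $\|\ue(\cdot,s)-u_0\|_\infty\le C\|u_0\|_{C^\lambda}s^{\lambda/2}$ for small times. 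For this I build barriers $w^\pm(x,t)=u_0(x_0)\pm(L\delta^\lambda+K|x-x_0|^2/\delta^{2-\lambda}+Ct\,\delta^{\lambda-2})$ (regularizing $u_0$ at scale $\delta$ if needed) and check with Theorem \ref{thm.asexp}(c) that they are super/subsolutions of the DPP. Since the quadratic has bounded $D^2$, the error term is controlled uniformly. The operator's $(p-1)$-root homogeneity makes $(\plap\ph)^{1/(p-1)}$ of order $\delta^{\lambda-2}$ times constants, and optimizing $\delta\sim\sqrt{t}$ gives $t^{\lambda/2}$. I can also handle the time lag between $t$ and $t\pm\epsilon^2/2$ by this choice.
\end{itemize}
I expect this barrier construction to be the main technical obstacle, because the nonlinear $\max/\min$ structure forces checking both inequalities carefully. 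Radial quadratics help here, since their gradient and Hessian are explicit.

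For (b) I invoke an asymptotic Arzelà--Ascoli lemma: the family is uniformly bounded and asymptotically equicontinuous, with the time modulus holding up to an $O(\epsilon^\lambda)$ defect on the grid. This yields a subsequence converging locally uniformly to some $u\in UC$, and the estimates in (c) pass to the limit. To show that the limit is a viscosity solution, I take a smooth $\phi$ touching $u$ from above at $(x_0,t_0)$. Uniform convergence gives approximate maximum points $(x_\epsilon,t_\epsilon)$ of $\ue-\phi$, and monotonicity plus constant-commutation in the DPP give $\phi(x_\epsilon,t_\epsilon)\le\A_\epsilon[\phi](x_\epsilon,t_\epsilon-\epsilon^2/2)+o(\epsilon^2)$. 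I then apply Theorem \ref{thm:AMVFparab-intro}. Where $\gr\phi(x_0,t_0)\ne0$, estimate (b) is uniform nearby. At critical points, estimate (c) is used and gives an error of size $|\gr\phi|^{(p-2)/(p-1)}\to0$, which matches the viscosity definition of the Appendix; the argument is as in the proof of Theorem \ref{thm:ascharac}.

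For (d) I use the decay \eqref{exp.decay.u0} to get uniqueness of limits without a general comparison principle. The plan is to show $|\ue(x,t)|\le C'e^{-|x|+\kappa t}$ uniformly in $\epsilon$, via the supersolution $e^{\kappa t}\cosh$-type barrier $\Phi(x,t)=Ce^{\kappa t}\sum_i\cosh(x_i)$-like comparison in the DPP (here Theorem \ref{thm.asexp}(c) with bounded derivatives ratios is used). Any two limits then decay uniformly at infinity on $[0,T]$. On large balls $B_R$, the comparison principle for the Dirichlet problem from \cite{Lindgren_Hynd_2016} shows that two such limits differ by at most their boundary oscillation, which is $O(e^{-R})$. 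Letting $R\to\infty$ gives a unique limit, so the whole family converges, uniformly on $\Rd\times[0,T]$ thanks to the uniform tail bound.
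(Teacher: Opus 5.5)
Two steps fail as written. The rest of the architecture is sound.

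\emph{Time regularity.} Comparison for the whole-space DPP is iterated monotonicity, so your barrier must satisfy $\A_\epsilon[w^+(\cdot,t)](x)\le w^+(x,t+\epsilon^2/2)$ at \emph{every} $x\in\Rd$. For the quadratic barrier this fails far from $x_0$. Since $\bigl(\plap|x-x_0|^2\bigr)^{1/(p-1)}=2(d+p-2)^{1/(p-1)}|x-x_0|^{(p-2)/(p-1)}$, you get $(\plap w^+)^{1/(p-1)}=2(d+p-2)^{1/(p-1)}K\delta^{\lambda-2}|x-x_0|^{(p-2)/(p-1)}$. Homogeneity produces the factor $\delta^{\lambda-2}$, but the spatial profile is unbounded. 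The error bound of Theorem~\ref{thm.asexp}(c) also contains $|\gr w^+(x)|^{(p-2)/(p-1)}\|D^2w^+\|^{1/(p-1)}$ and $\epsilon^{2/(3p-4)}|\gr w^+(x)|$, which a bounded Hessian does not control. The constant slope $C\delta^{\lambda-2}$ therefore loses for $|x-x_0|$ large. This cannot be ignored: each step of length $\epsilon^2/2$ transports information over distances $\gamma\epsilon\,m(\epsilon)^{-\alpha/2}\gg\epsilon^2$, so a far-away defect reaches $x_0$ in time $o(1)$.

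There are two ways to repair it.
\begin{itemize}
\item Use your own $\LL^\infty$ bound and truncate to $\min\{w^+,\|u_0\|_{\LL^\infty}\}$. By monotonicity you then need the inequality for $w^+$ only where $w^+<\|u_0\|_{\LL^\infty}$, i.e.\ for $|x-x_0|\lesssim\delta^{1-\lambda/2}$, and there all the terms above are $\lesssim\delta^{\lambda-2}$.
\item Follow the paper, which avoids barriers. Mollify to $u_{0,\delta}$, whose gradient and Hessian are \emph{globally} bounded. Bound $\|\A_\epsilon[u_{0,\delta}]-u_{0,\delta}\|_{\LL^\infty}\le C\epsilon^2\|u_{0,\delta}\|_{C^2}$ directly with Lemma~\ref{infimum}. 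By the contraction, one-step increments do not grow, so you can telescope and take $\delta=\sqrt t$.
\end{itemize}

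\emph{Decay in (d).} The barrier $Ce^{\kappa t}\sum_i\cosh(x_i)$ is bounded below by $dCe^{\kappa t}$ and grows at infinity. Comparing $\ue$ with it gives no decay, so $|\ue(x,t)|\le C'e^{\kappa t-|x|}$ does not follow. To make $\ue$ small at a far point you need supersolutions that dominate $Ce^{-|x|}$ everywhere yet are small there.

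The paper uses the one-sided exponentials $U^z(x,t)=Ce^{Lt}e^{\langle x,z\rangle}$ with $|z|=1$. These have the following properties:
\begin{itemize}
\item they dominate $Ce^{-|x|}$;
\item they satisfy $\partial_tU^z-(\plap U^z)^{1/(p-1)}=(L-(p-1)^{1/(p-1)})U^z$;
\item $\gr U^z\neq0$ everywhere, and every term of the error bound in Theorem~\ref{thm:AMVFparab-intro}(b) is $\lesssim U^z$.
\end{itemize}
Hence for $L=2(p-1)^{1/(p-1)}$ and $\epsilon$ small they are DPP supersolutions. Comparing with $\pm U^z$ and minimizing over $z$ gives $|\ue|\le Ce^{LT}e^{-|x|}$ on $\Rd\times[0,T]$, uniformly in $\epsilon$.

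With this tail bound in place, your ending is correct and lighter than the paper's. You compare two subsequential limits directly on $B_R\times[0,T]$ via the Hynd--Lindgren comparison principle: they share the initial datum, and their lateral discrepancy is $O(e^{-R})$. The paper instead compares $u$ and $\ue$ with Dirichlet problems on $B_K$ and invokes the bounded-domain convergence of Section~\ref{sec4}. Likewise, your asymptotic Arzel\`a--Ascoli argument, which tolerates the $O(\epsilon^\lambda)$ time defect, is a valid substitute for the paper's linear/piecewise-constant interpolation.
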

	
	Notice that \eqref{exp.decay.u0} holds for initial conditions that are compactly supported and continuous. 
	In general, let us point out that several difficulties arise even for the heat
equation (the linear case, $p=2$) if the initial data have a complex behavior at infinity, see, for instance, \cite{22}.

	Concerning the game theoretical interpretation in the whole space, the results are similar to the ones
	that hold for the problem in a bounded domain. The rules of the game are the same, the only main 
	difference is that the game ends only when time is exhausted, $t_\tau \leq 0$, and in this case the final payoff is given by $u_0(x_{\tau})$. 
	The convergence of the values of the game as $\epsilon \to 0$ relies on Theorem \ref{teo.conver.Rd}.
	We leave the details of this case to the reader.


	\section{Asymptotic expansion for $C^2$ and $C^3$ functions}\label{sec3}
	In this section, we prove the asymptotic expansions and mean value formulas for the $p$-Laplacian and doubly nonlinear parabolic equation \eqref{equation.intro} in the range $p\in(2,\infty)$. Throughout this work, it is necessary to restrict the set appearing in the infimum \eqref{infimum-1}
	to $c\in [m,M]$, in order to make the computations well defined. More precisely, the hypotheses about the extremes of the interval
	$[m,M]$ are:
	\begin{equation}\label{HT}\tag{HT}
		\begin{split}
			&m:\RR_+\longrightarrow \RR_+ \mbox{ is nondecreasing and such that } m(\epsilon)\to 0 \mbox{ and }\epsilon \, m(\epsilon)^{-\alpha/2}\to 0 \mbox{ as }\epsilon\to 0^+.\\
			&M:\RR_+\longrightarrow \RR_+ \mbox{ is nonincreasing and such that } M(\epsilon)\to +\infty \mbox{ and }\epsilon^2 M(\epsilon)^{1-\alpha}\to 0 \mbox{ as }\epsilon\to 0^+.
		\end{split}
	\end{equation} 
	Note that in Section \ref{sec:mainres} we had assumed that $m$ and $M$ are given by explicit powers of $\epsilon$, see \eqref{m-M}. 
	These choices of $m(\epsilon)$ and $M(\epsilon)$ satisfy \eqref{HT} and, as we will see later, are the precise ones for estimating the error 
	in the asymptotic expansions.
	
	We now show that the operator $\A_\epsilon$ (see formula \eqref{operator-A}) is well defined under suitable assumptions.
	\begin{thm}\label{well-def}
		Let $d\geq1$, $p>2$, $x\in \Rd$, and $\ph:B_R(x)\longrightarrow \RR$ a measurable bounded function, for some $R>0$. Assume \eqref{HT}. Then, there exists an $\beps>0$ such that for all $\epsilon\in (0,\beps)$ the operator $\A_\epsilon[\ph](x)$ is well defined.
	\end{thm}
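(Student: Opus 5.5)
The plan is to check that every ball appearing in \eqref{operator-A} lies inside $B_R(x)$ for all $c\in[m(\epsilon),M(\epsilon)]$. Then each quantity entering the inner infima and suprema is a finite number bounded by $\|\ph\|_{\LL^\infty(B_R(x))}$. The quantities involved are $\sup_{B_{\epsilon^2c^{1-\alpha}}(x)}\ph$, $\inf_{B_{\epsilon^2c^{1-\alpha}}(x)}\ph$ and $M_{\epsilon c^{-\alpha/2}}[\ph](x)$. Once this holds, the infimum and supremum over $c$ of bounded families are finite, so the max and min with $\ph(x)$ are finite and $\A_\epsilon[\ph](x)$ is a well-defined real number.

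First I handle the radii. Since $0<\alpha=\frac{p-2}{p-1}<1$, the map $c\mapsto \epsilon^2c^{1-\alpha}$ is increasing. Hence over $[m(\epsilon),M(\epsilon)]$ the radius $\epsilon^2 c^{1-\alpha}$ is at most $\epsilon^2M(\epsilon)^{1-\alpha}$, which tends to $0$ by \eqref{HT}. The map $c\mapsto \gamma\epsilon c^{-\alpha/2}$ is decreasing, so this radius is at most $\gamma\epsilon\, m(\epsilon)^{-\alpha/2}$, which also tends to $0$ by \eqref{HT}. Therefore there is $\beps>0$ such that for $\epsilon\in(0,\beps)$ we have $\epsilon^2M(\epsilon)^{1-\alpha}<R$ and $\gamma\epsilon m(\epsilon)^{-\alpha/2}<R$. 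We also need $m(\epsilon)<M(\epsilon)$ so that the index set is a nonempty interval; this follows since $m\to0$ and $M\to\infty$, possibly after shrinking $\beps$. With these choices, all balls $B_{\epsilon^2c^{1-\alpha}}(x)$ and $B_{\gamma\epsilon c^{-\alpha/2}}(x)$ lie in $B_R(x)$.

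Next I bound the terms. Boundedness of $\ph$ gives $|\sup_B\ph|,|\inf_B\ph|\le\|\ph\|_\infty$, where we use the pointwise supremum as in the notation $B(\RR^d)$. Measurability and boundedness make $\fint_{B_{\gamma\rho}(x)}\ph$ finite with modulus at most $\|\ph\|_\infty$. Since $M_\rho$ is a convex combination (weights $\beta/2,\beta/2,1-\beta$ with $\beta\in[0,1)$), $|M_{\epsilon c^{-\alpha/2}}[\ph](x)|\le\|\ph\|_\infty$. Each bracket $\alpha\sup\ph+(1-\alpha)M[\ph](x)$ is then bounded by $\|\ph\|_\infty$ uniformly in $c$. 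Its infimum and supremum over a nonempty set are therefore finite, and combining them with $\ph(x)$ through max, min and the $\frac12$ weights yields a finite value.

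There is no real obstacle here. The only point needing care is the direction of monotonicity in $c$ of each radius. This determines whether the worst case is $c=m$ or $c=M$, and it is exactly what the two limits in \eqref{HT} are designed to control.
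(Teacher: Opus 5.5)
Your proposal is correct and follows essentially the paper's argument. The paper likewise uses \eqref{HT} and the monotonicity of the radii in $c$ to obtain $B_{\epsilon^2c^{1-\alpha}}(x)\subset B_{\epsilon^2M(\epsilon)^{1-\alpha}}(x)\subset B_R(x)$ and $B_{\gamma\epsilon c^{-\alpha/2}}(x)\subset B_{\gamma\epsilon m(\epsilon)^{-\alpha/2}}(x)\subset B_R(x)$, and from these inclusions concludes that all the sups, infs and averages are defined. Your additional checks fill in details the paper leaves implicit: that $m(\epsilon)<M(\epsilon)$, and that each bracket is bounded by $\|\ph\|_{\LL^\infty(B_R(x))}$ uniformly in $c$, so the outer infimum and supremum over $c$ are finite.
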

	\begin{proof}
		Thanks to \eqref{HT}, there exists an $\beps>0$ small such that for any $c\in [m(\epsilon),M(\epsilon)]$ and $\epsilon\in (0,\beps)$ we have
		$$B_{\epsilon^2 c^{1-\alpha}}(x)\subset B_{\epsilon^2M(\epsilon)^{1-\alpha}}(x)\subset B_R(x)\hspace{3mm}\mbox{and}\hspace{3mm}B_{\gamma \epsilon c^{-\alpha/2}}(x)\subset B_{\gamma \epsilon m(\epsilon)^{-\alpha/2}}(x)\subset B_R(x).$$
		Thus, $\sup_{B_{\epsilon^2 c^{1-\alpha}}(x)}\ph$ and $\inf_{B_{\epsilon^2 c^{1-\alpha}}(x)}\ph$ are well defined by the first inclusion. Moreover, $\sup_{B_{\gamma \epsilon c^{\alpha/2}}(x)}\ph$, $\inf_{B_{\gamma \epsilon c^{\alpha/2}}(x)}\ph$ and $\intn_{B_{\gamma \epsilon c^{\alpha/2}}(x)}\ph(y,t)\dy$ are
		also well defined by the second inclusion. In conclusion, the operator $\A_\epsilon[\ph](x,t)$ is well defined. 
	\end{proof}
	
	Now, we are ready to prove the asymptotic expansion for $C^2$ functions at points where $\nabla \varphi\not=0$.
	\begin{thm}\label{exp-c2}
		Let $d\geq 1$, $p>2$, $x\in\Rd$, $t>0$, $R>0$ and assume \eqref{HT} is satisfied. 
		Then, the following statements hold:
		\begin{enumerate}[label=\alph*), leftmargin=*]
			\item Let $\ph\in C^2(B_R(x))$ with $\gr \ph(x)\not=0$. Then, there exists $0<\beps<R$ such that for all $\epsilon\in(0,\beps)$ we have
			\begin{equation}\label{elliptic-formulac2}
				\A_\epsilon[\ph](x)=\ph(x)+\frac{\epsilon^2}{2}\left(\plap \ph(x)\right)^{\frac{1}{p-1}}+o(\epsilon^2).
			\end{equation}
			\item Let $\tau>0$ and $\ph\in C^2(B_R(x)\times [t-\tau,t+\tau])$ with $\gr\ph(x,t)\not=0$. Then, there exists $0<\beps<\min[R,\tau]$ such that for all $\epsilon\in(0,\beps)$ we have
			\begin{equation}\label{formula-c2}
				\A_\epsilon[\ph](x,t)=\ph(x,t+\epsilon^2/2)+\frac{\epsilon^2}{2}\left(\left(\plap \ph(x,t)\right)^{\frac{1}{p-1}}-\partial_t \ph(x,t)\right)+o(\epsilon^2).
			\end{equation}
		\end{enumerate}
	\end{thm}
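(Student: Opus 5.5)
The plan is to reduce both statements to Lemma \ref{exp-gradient}, Theorem \ref{exp-plapN} and the truncated infimum estimate \eqref{error-3} of Lemma \ref{infimum}, and then to split according to the sign of $\plapN\ph(x)$. I treat part a) first.

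Write $a=|\gr\ph(x)|>0$ and $b=\plapN\ph(x)$. For $c\in[m(\epsilon),M(\epsilon)]$ put $r=\epsilon^2c^{1-\alpha}$ and $\rho=\epsilon c^{-\alpha/2}$. By \eqref{HT} we have, uniformly in $c$,
\[
r\le\epsilon^2M^{1-\alpha}\to0 \qquad\text{and}\qquad \rho\le\epsilon m^{-\alpha/2}\to0 .
\]
Lemma \ref{exp-gradient} and Theorem \ref{exp-plapN} then give
\[
\alpha\sup_{B_r(x)}\ph+(1-\alpha)M_\rho[\ph](x)-\ph(x)=\epsilon^2\Big(\alpha c^{1-\alpha}(a+E_1)+(1-\alpha)c^{-\alpha}(b+E_2)\Big).
\]
Here $E_1,E_2$ are bounded by quantities $\eta(\epsilon)\to0$ that are uniform in $c$, since the errors are $o(1)$ as $r,\rho\to0$. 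The inf-part of the second bracket gives the same identity with $-a$ in place of $a$, and the same errors. Thus the first bracket equals $\ph(x)+\epsilon^2 I_\epsilon$ and the second equals $\ph(x)-\epsilon^2 S_\epsilon$, where
\[
I_\epsilon=\inf_{c\in[m,M]}\{\alpha c^{1-\alpha}a+(1-\alpha)c^{-\alpha}b\}+\text{err},\qquad
S_\epsilon=\inf_{c\in[m,M]}\{\alpha c^{1-\alpha}a+(1-\alpha)c^{-\alpha}(-b)\}+\text{err}.
\]

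Next I control the error terms uniformly in $c$. They are bounded by
\[
\eta(\epsilon)\big(\alpha c^{1-\alpha}+(1-\alpha)c^{-\alpha}\big)\le\eta(\epsilon)\big(M^{1-\alpha}+m^{-\alpha}\big).
\]
This is the \textbf{main obstacle}, because $M^{1-\alpha}$ and $m^{-\alpha}$ blow up. The remedy is not to use the whole interval. When $b\neq0$ the infimum (or supremum) is essentially attained near the optimizer $c^*=(1-\alpha)|b|/(\alpha a)$, which is a fixed positive number. So I first restrict to $c$ in a compact interval $[c^*/2,2c^*]$, where the errors are truly $o(1)$. Then I show directly that $c$ far from $c^*$ cannot improve the extremum. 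For large $c$ the term $\alpha c^{1-\alpha}(a+E_1)\ge\alpha c^{1-\alpha}a/2$ dominates, and for small $c$ the term $c^{-\alpha}(b+E_2)$ with $|E_2|<|b|/2$ dominates. This is valid once $\epsilon$ is small, since $E_1,E_2\to0$ uniformly. The case $b=0$ needs separate treatment: each bracket is then squeezed towards $0$, because with $b=0$ the convex function of $c$ is minimized near $c=m$, and one uses $\epsilon m^{-\alpha/2}\to0$.

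With the errors under control, I split by the sign of $b$. If $b>0$, then Lemma \ref{infimum} and \eqref{error-3} give $I_\epsilon\to a^\alpha b^{1-\alpha}$, while $S_\epsilon$ behaves like $\inf\{\dots-|b|c^{-\alpha}\}$. The latter is very negative (its infimum over $c\ge m$ tends to $-\infty$ as $m\to0$), so it is certainly $\le0$ for small $\epsilon$. Hence
\[
\max[\ph(x)+\epsilon^2I_\epsilon,\ph(x)]=\ph(x)+\epsilon^2 a^\alpha b^{1-\alpha}+o(\epsilon^2),\qquad
\min[\ph(x)-\epsilon^2S_\epsilon,\ph(x)]=\ph(x).
\]
Averaging gives $\ph(x)+\tfrac{\epsilon^2}{2}a^\alpha b^{1-\alpha}+o(\epsilon^2)$. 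Since $(\plap\ph)^{1/(p-1)}=a^\alpha(b)^{1-\alpha}$, this is \eqref{elliptic-formulac2}. The case $b<0$ is symmetric, and the case $b=0$ gives both brackets equal to $\ph(x)+o(\epsilon^2)$.

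For part b), I apply part a) at the fixed time $t$ to $\ph(\cdot,t)$, since $\A_\epsilon$ acts only in space. Then I Taylor expand in time:
\[
\ph(x,t)=\ph(x,t+\epsilon^2/2)-\tfrac{\epsilon^2}{2}\partial_t\ph(x,t)+o(\epsilon^2),
\]
using $\ph\in C^2$ in $t$. Substituting this into the expansion from part a) gives \eqref{formula-c2}.
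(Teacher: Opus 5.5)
Your overall structure matches the paper's: expansions from Lemma \ref{exp-gradient} and Theorem \ref{exp-plapN}, the truncated infimum \eqref{error-3}, a split by the sign of $\plapN\ph(x)$, and a Taylor expansion in $t$ for part b). However, the degenerate case $b=\plapN\ph(x)=0$ is not actually proved, and it is allowed here since only $\gr\ph(x)\neq0$ is assumed.

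\textbf{Why your argument fails for $b=0$.} Your remedy for the obstacle you identified needs a fixed positive optimizer. The minimizer of $c\mapsto\alpha c^{1-\alpha}a+(1-\alpha)c^{-\alpha}b$ is $c^*=b/a$, not $(1-\alpha)|b|/(\alpha a)$. For $b=0$ it collapses to $0$, so there is no fixed compact interval to localize to. The relevant region is small $c$, where the obstacle is worst. At $c=m$ the bracket contains the term $(1-\alpha)m^{-\alpha}E_2(\epsilon m^{-\alpha/2})$. The fact that $\epsilon m^{-\alpha/2}\to0$ only gives $E_2\to0$. For $\ph\in C^2$, Theorem \ref{exp-plapN} gives no rate, so the product need not vanish. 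For instance, take $d=1$, $x=0$, $\ph(y)=y+|y|^{2+s}$. Then $E_2(\rho)\sim\rho^{s}$, and with $m$ as in \eqref{m-M} the quantity $m^{-\alpha}(\epsilon m^{-\alpha/2})^{s}$ blows up whenever $s<\alpha$. So the claimed squeeze towards $0$ is not justified by what you wrote.

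\textbf{The missing observation.} This is what the paper uses: the errors are relative. They enter as $c^{1-\alpha}(a+E_1)$ and $c^{-\alpha}(b+E_2)$, with $|E_1|,|E_2|<\delta$ uniformly in $c\in[m,M]$ once $\epsilon$ is small. So for every $c$, each bracket lies between the same expression with coefficients $(a-\delta,b-\delta)$ and $(a+\delta,b+\delta)$. The factors $M^{1-\alpha}$ and $m^{-\alpha}$ never appear.

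\textbf{Fixing $b=0$.} You only need upper bounds on $I_\epsilon$ and $S_\epsilon$, since the outer $\max[\cdot,0]$ supplies the lower bound $0$. By \eqref{error-3},
$I_\epsilon,S_\epsilon\le(a+\delta)^\alpha\delta^{1-\alpha}+\alpha(a+\delta)m^{1-\alpha}+(1-\alpha)\delta M^{-\alpha}$,
which tends to $0$ as $\epsilon\to0$ and then $\delta\to0$. Equivalently, test with $c_\epsilon=\max\{m(\epsilon),\eta(\epsilon)\}$, for which $c_\epsilon^{-\alpha}\eta(\epsilon)\le\eta(\epsilon)^{1-\alpha}$ and $c_\epsilon\to0$.

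\textbf{The case $b\neq0$.} The same sandwich settles it directly: the lower bound $(a-\delta)^\alpha(b-\delta)^{1-\alpha}$ comes from the infimum over all $c>0$. So your compact localization is unnecessary. If you keep it, center it at $b/a$. Also take an interval $[c_0,C_1]$, depending on $a,b$, large enough that coercivity beats $a^\alpha b^{1-\alpha}$ outside it. The interval $[c^*/2,2c^*]$ is too small: at $c=2b/a$, the term $\alpha c^{1-\alpha}a/2$ equals only $\alpha2^{-\alpha}a^\alpha b^{1-\alpha}$.

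Part b) is fine.
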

	\begin{proof}
		Recall that the operator $\A_\epsilon$ is defined in \eqref{operator-A} and denote
		$$Q_\epsilon[\ph](x):= \frac{\A_\epsilon[\ph](x)-\ph(x)}{\epsilon^2/2}.$$

		We split the proof of $a)$ in several steps:
		
		\textit{Step 1: We first show that for all $\epsilon>0$ small enough the following identity holds:}
		\begin{equation}\label{exp-c2-step1}
			\begin{aligned}
				Q_\epsilon&[\ph](x) 
				= \max\Big[ 
				\inf_{c\in[m,M]} \Big\{ 
				\alpha\, c^{1-\alpha} \Big(|\nabla \ph(x)| + \frac{o(\epsilon^2 c^{1-\alpha})}{\epsilon^2 c^{1-\alpha}}\Big) 
				+ (1-\alpha)\, c^{-\alpha} \Big(\plapN \ph(x) + \frac{o(\epsilon^2 c^{-\alpha})}{\epsilon^2 c^{-\alpha}}\Big) 
				\Big\}, \, 0 
				\Big] \\
				&\qquad+ \min\Big[ 
				-\inf_{c\in[m,M]} \Big\{ 
				\alpha\, c^{1-\alpha} \Big(|\nabla \ph(x)| + \frac{o(\epsilon^2 c^{1-\alpha})}{\epsilon^2 c^{1-\alpha}}\Big)
				+ (1-\alpha)\, c^{-\alpha} \Big(-\plapN \ph(x) + \frac{o(\epsilon^2 c^{-\alpha})}{\epsilon^2 c^{-\alpha}}\Big) 
				\Big\}, \, 0 
				\Big].
			\end{aligned}
		\end{equation}
		To prove this assertion we start with the following direct computation
		\begin{align*}
			Q_\epsilon[\ph](x)
			&=
			\frac{1}{\epsilon^2}
			\max\Big[
			\inf_{c\in[m,M]} \Big\{
			\alpha \Big(
			\sup_{B_{\epsilon^2 c^{1-\alpha}}(x)} \ph
			- \ph(x)
			\Big)
			+ (1-\alpha)\Big(
			M_{\epsilon c^{-\alpha/2}}[\ph](x)
			- \ph(x)
			\Big)
			\Big\},\,0
			\Big]\\
			&\quad
			+ \frac{1}{\epsilon^2}
			\min\Big[
			\sup_{c\in[m,M]} \Big\{
			\alpha \Bigl(
			\inf_{B_{\epsilon^2 c^{1-\alpha}}(x)} \ph
			- \ph(x)
			\Bigr)
			+ (1-\alpha)\Big(
			M_{\epsilon c^{-\alpha/2}}[\ph](x)
			- \ph(x)
			\Big)
			\Big\},\,0
			\Big].
			\\
			&=
			\max\Bigg[
			\inf_{c\in[m,M]} \Bigg\{
			\alpha c^{1-\alpha}\Bigl(
			\frac{\sup_{B_{\epsilon^2 c^{1-\alpha}}(x)} \ph
				- \ph(x)}{\epsilon^2 c^{1-\alpha}}
			\Bigr)
			+ (1-\alpha)c^{-\alpha}\Bigl(
			\frac{M_{\epsilon c^{-\alpha/2}}[\ph](x)
				- \ph(x)}{\epsilon^2 c^{-\alpha}}
			\Bigr)
			\Bigg\},\,0
			\Bigg]
			\\
			&\quad+ \min\Bigg[
			-\inf_{c\in[m,M]} \Bigg\{
			\alpha c^{1-\alpha}\Bigl(
			\frac{\ph(x)-\inf_{B_{\epsilon^2 c^{1-\alpha}}(x)} \ph}{\epsilon^2 c^{1-\alpha}}
			\Bigr)
			+ (1-\alpha)c^{-\alpha}\Bigl(
			-\frac{M_{\epsilon c^{-\alpha/2}}[\ph](x)
				- \ph(x)}{\epsilon^2 c^{-\alpha}}
			\Bigr)
			\Bigg\},\,0
			\Bigg].
		\end{align*}
		From here \eqref{exp-c2-step1} follows  using the asymptotic expansions for $|\nabla\varphi|$ and $\plapN \varphi$ in Lemma \ref{exp-gradient} and Theorem \ref{exp-plapN}. 
		
		We aim to bound expression \eqref{exp-c2-step1} from above and from below. Let us fix $\delta>0$ small. Since $\gr\ph(x)\not=0$ and condition \eqref{HT} holds, there exists $\beps>0$ such that for all $\epsilon\in(0,\beps)$ we have
		$$-\delta<\frac{o(\epsilon^2c^{1-\alpha})}{\epsilon^2c^{1-\alpha}}<\delta\hspace{3mm}\mbox{and}\hspace{3mm}-\delta<\frac{o(\epsilon^2c^{-\alpha})}{\epsilon^2c^{-\alpha}}<\delta.$$
		
		\textit{Step 2: Lower bound. } We will show that $\displaystyle Q_\epsilon[\ph](x)\geq \left(\plap \ph(x)\right)^{\frac{1}{p-1}}+o_\delta(1)+o_\epsilon(1)$.
		
		Starting from \eqref{exp-c2-step1} we have,
		\begin{align*}
			Q_\epsilon[\ph](x)&\geq \max \Big[ 
			\inf_{c\in[m,M]} \Big\{ 
			\alpha\, c^{1-\alpha} \big( |\gr \ph(x)| - \delta \big) 
			+ (1-\alpha)\, c^{-\alpha} \big( \plapN \ph(x) - \delta \big) 
			\Big\}, 0 
			\Big] \\
			&\quad - \max \Big[ 
			\inf_{c\in[m,M]} \Big\{ 
			\alpha\, c^{1-\alpha} \big( |\gr \ph(x)| + \delta \big) 
			+ (1-\alpha)\, c^{-\alpha} \big( -\plapN \ph(x) + \delta \big) 
			\Big\}, 0 
			\Big]\\
			&\eqqcolon Q_{\epsilon,\delta}^+[\ph](x)-Q_{\epsilon,\delta}^-[\ph](x).
		\end{align*}
		
		We now aim to apply Lemma~\ref{infimum} to the two infima that appear above. To this end, we distinguish 
		different cases. If the normalized $p$-Laplacian is positive we deduce that the first maximum attains the value of the infimum, while the second one vanishes. If the normalized $p$-Laplacian is nonpositive, then the first maximum vanishes, whereas the second one attains the value of the infimum.  
		In conclusion, if we bound each term separately by the same quantity, we can deduce that, independently of the sign of the $p$-Laplacian at each point, the quotient $Q_\epsilon[\ph]$ is bounded from below. 
		
		Assume that $\plapN\ph(x)> 0$. Then we can choose $\delta>0$ small enough such that $\plapN\ph(x)-\delta>0$. Thus:
		\begin{align*}
			Q_{\epsilon,\delta}^+[\ph](x)&\geq  
			\inf_{c>0} \Big\{ 
			\alpha\, c^{1-\alpha} \big( |\gr \ph(x)| - \delta \big) 
			+ (1-\alpha)\, c^{-\alpha} \big( \plapN \ph(x) - \delta \big) 
			\Big\} \\
			&=  |\gr \ph(x)|^\alpha \big( \plapN \ph(x) \big)^{1-\alpha} + o_\delta(1).
		\end{align*}
		On the other hand, since $-\plapN\ph(x)+\delta<0$, the infimum term in $Q_{\epsilon,\delta}^-[\ph](x)$ is negative for $\epsilon$ small enough. Thus $Q_{\epsilon,\delta}^-[\ph](x)=0$.
		
		Now assume that $\plapN\ph(x)\leq 0$. By similar reasoning we obtain that $Q_{\epsilon,\delta}^+[\ph](x)=0$ and 
		\begin{align*}
			- Q_{\epsilon,\delta}^-[\ph](x)
			&\geq -  
			\big(|\gr \ph(x)| + \delta \big)^\alpha \big( -\plapN \ph(x) + \delta \big)^{1-\alpha} 
			- \alpha\, m(\epsilon)^{1-\alpha} \big( |\gr \ph(x)| + \delta \big)^\alpha \\
			&\quad - (1-\alpha)\, M(\epsilon)^{-\alpha} \big( -\plapN \ph(x) + \delta \big)^{1-\alpha}  \\
			&= -  |\gr \ph(x)|^\alpha \big( -\plapN \ph(x) \big)^{1-\alpha} + o_\delta(1) + o_\epsilon(1).
		\end{align*}
		
		Therefore, regardless of the sign of the normalized $p$-Laplacian, we conclude that
		\[
		\frac{\A_\epsilon[\ph](x)-\ph(x)}{\epsilon^2/2}
		\geq 
		|\gr\ph(x)|^\alpha\left(\plapN\ph(x)\right)^{1-\alpha}
		+o_\delta(1)+o_\epsilon(1).
		\]
		
		\textit{Step 3: upper bound. } We want to show that $\displaystyle Q_\epsilon[\ph](x) < \left(\plap\ph(x,t)\right)^{\frac{1}{p-1}} + o_\delta(1)+o_\epsilon(1).$
		
		It is standard to check that $Q_\epsilon[-\ph](x)=-Q_\epsilon[\ph](x)$ and $\left(\plap [-\ph](x)\right)^{\frac{1}{p-1}}=-\left(\plap \ph(x)\right)^{\frac{1}{p-1}}$. Thus, we apply Step 2 to the function $-\ph$ to get
		\[
		-Q_\epsilon[\ph](x)\geq -\left(\plap \ph(x)\right)^{\frac{1}{p-1}}+o_\delta(1)+o_\epsilon(1),
		\]
		which is precisely what we wanted to show.

		\textit{Step 4: Proof of $a)$.} By the arbitrariness of $\delta$, we conclude, from the estimates in Step 2 and Step 3,  that $Q_\epsilon[\ph](x)= \left(\plap\ph(x)\right)^{\frac{1}{p-1}} + o_\epsilon(1),$
		which completes the proof of~$a)$.
		
		\textit{Step 5: Proof of $b)$.} On the one hand, for every $t>0$ fixed, we use the previous result to obtain
		$$\frac{\A_\epsilon[\ph](x,t)-\ph(x,t)}{\epsilon^2/2}=\left(\plap \ph(x,t)\right)^{\frac{1}{p-1}}+o_\epsilon(1).$$
		On the other hand, by Taylor expansions
		$$ \frac{\ph(x,t+\epsilon^2/2)-\ph(x,t)}{\epsilon^2/2}=\partial_t \ph(x,t) + o_\epsilon(1).$$
		Therefore,  subtracting both expressions we conclude that 
		$$\frac{\A_\epsilon[\ph](x,t)-\ph(x,t+\epsilon^2/2)}{\epsilon^2/2}=\left(\left(\plap \ph(x,t)\right)^{\frac{1}{p-1}}-\partial_t \ph(x,t)\right)+o_\epsilon(1).$$
	\end{proof}
	
	One of the main novelties of this paper, compared with \cite{delTeso_Rossi_2024}, is that the previous results, besides being independent of the sign of the $p$-Laplacian, can also be extended to points where the gradient vanishes (and so does the $p$-Laplacian). Points at which the gradient is zero have traditionally posed difficulties when working with the $p$-Laplacian. For this reason, the proof is slightly different, and these points are treated separately.
	
	We will use the following crucial estimate
	$$\Big|\frac{M_{\rho}[\ph](x)-\ph(x)}{\rho^2}\Big|\leq \|D^2 \ph\|_{\LL^\infty(B_{\gamma\rho}(x))}.$$

	\begin{thm}\label{exp-c2-grad}
		Let the hypotheses of Theorem \ref{exp-c2} hold.  Then, the following statements hold:
		\begin{enumerate}[label=\alph*), leftmargin=*]
			\item Let $\ph\in C^2(B_R(x))$ with $\gr \ph(x)=0$. Then, there exists $0<\beps<R$ such that for all $\epsilon\in(0,\beps)$ we have
			\begin{equation}\label{elliptic-formulac2-grad}
				\A_\epsilon[\ph](x)=\ph(x)+o(\epsilon^2).
			\end{equation}
			\item  Let $\tau>0$ and $\ph\in C^2(B_R(x)\times [t-\tau,t+\tau])$ with $\gr\ph(x,t)=0$. Then, there exists $0<\beps<\min[R,\tau]$ such that for all $\epsilon\in(0,\beps)$ we have
			\begin{equation}\label{formula-c2-grad}
				\A_\epsilon[\ph](x,t)=\ph(x,t+\epsilon^2/2)-\frac{\epsilon^2}{2}\partial_t \ph(x,t)+o(\epsilon^2).
			\end{equation}
		\end{enumerate}
	\end{thm}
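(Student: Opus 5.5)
We will prove a) by showing that $Q_\epsilon[\ph](x)=\frac{\A_\epsilon[\ph](x)-\ph(x)}{\epsilon^2/2}\to 0$; part b) then follows exactly as in Step 5 of the proof of Theorem \ref{exp-c2}, by a Taylor expansion in time. Since $\plap\ph(x)=0$ when $\gr\ph(x)=0$, this is the expected expansion.

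The starting point is the identity from Step 1 of the proof of Theorem \ref{exp-c2}:
\[
Q_\epsilon[\ph](x)=\max\Big[\inf_{c\in[m,M]} F_\epsilon^+(c),0\Big]+\min\Big[-\inf_{c\in[m,M]}F_\epsilon^-(c),0\Big],
\]
where
\[
F_\epsilon^\pm(c)=\alpha c^{1-\alpha}\,G^\pm_\epsilon(c)\pm(1-\alpha)c^{-\alpha}\,\frac{M_{\epsilon c^{-\alpha/2}}[\ph](x)-\ph(x)}{\epsilon^2c^{-\alpha}},
\]
with $G^+_\epsilon(c)=\frac{\sup_{B_{\epsilon^2c^{1-\alpha}}(x)}\ph-\ph(x)}{\epsilon^2c^{1-\alpha}}$ and $G^-_\epsilon(c)$ the analogous quotient with $\ph(x)-\inf$. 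The first term is nonnegative and the second nonpositive, so it suffices to show that $(\inf_{c} F_\epsilon^\pm(c))_+\to0$.

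The key point is to bound the infimum from above by evaluating it at the single choice $c=M(\epsilon)$. We cannot invoke Theorem \ref{exp-plapN} here, since $\gr\ph(x)=0$. Instead we use the crucial estimate stated before the theorem, which gives
\[
\Big|\frac{M_{\epsilon c^{-\alpha/2}}[\ph](x)-\ph(x)}{\epsilon^2c^{-\alpha}}\Big|\le K:=\|D^2\ph\|_{\LL^\infty(B_{R'}(x))}
\]
on a fixed smaller ball. This is uniform in $c\in[m,M]$ because, by \eqref{HT}, the radii $\gamma\epsilon c^{-\alpha/2}\le\gamma\epsilon m^{-\alpha/2}\to0$. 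For the gradient part, Lemma \ref{exp-gradient} with $\gr\ph(x)=0$ gives $0\le G^\pm_\epsilon(c)=E_1(\epsilon^2c^{1-\alpha},\ph,x)$. Since $\ph\in C^2$, a second-order Taylor expansion yields $\sup_{B_r(x)}\ph-\ph(x)\le \frac{K}{2}r^2$, so $G^\pm_\epsilon(c)\le \frac K2\epsilon^2c^{1-\alpha}$. Combining the two bounds,
\[
0\le\Big(\inf_{c\in[m,M]}F^\pm_\epsilon(c)\Big)_+\le \alpha M^{1-\alpha}\cdot\tfrac K2\epsilon^2M^{1-\alpha}+(1-\alpha)M^{-\alpha}K .
\]
The second term tends to $0$ because $M(\epsilon)\to\infty$. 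The first equals $\frac{\alpha K}{2}\epsilon^2M^{2-2\alpha}=\frac{\alpha K}{2}(\epsilon^2M^{1-\alpha})^2\cdot\epsilon^{-2}$.

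\textbf{Main obstacle.} This first term is the delicate one, and the convergence $\epsilon^2M^{1-\alpha}\to0$ from \eqref{HT} alone is not enough to control it. We handle it in one of two ways. The first option is to use the more favourable estimate $G^\pm_\epsilon(c)=o(1)$ as the radius tends to $0$, and bound the first term by $\alpha M^{1-\alpha}\,o(1)$. This still requires a suitable choice of $c$. The better option is to optimize $c$ rather than take $c=M$: choose $c_\epsilon\in[m,M]$ with $c_\epsilon\to\infty$ slowly enough that $c_\epsilon^{1-\alpha}\sup_{r\le\epsilon^2c_\epsilon^{1-\alpha}}E_1(r)\to0$. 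By the Taylor bound, $c^{1-\alpha}G_\epsilon^\pm(c)\le \frac K2\epsilon^2c^{2-2\alpha}$, so it suffices that $c_\epsilon\le \epsilon^{-1/(1-\alpha)}\theta(\epsilon)$ with $\theta\to0$. For instance, $c_\epsilon=\min[M(\epsilon),\epsilon^{-1/(2(1-\alpha))}]$ makes the first term $O(\epsilon)$ and the second $O(c_\epsilon^{-\alpha})\to0$. Here $c_\epsilon\ge m$ holds for small $\epsilon$ since $m\to0$. This gives $Q_\epsilon[\ph](x)=o_\epsilon(1)$, proving \eqref{elliptic-formulac2-grad}. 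Then \eqref{formula-c2-grad} follows by subtracting $\frac{\ph(x,t+\epsilon^2/2)-\ph(x,t)}{\epsilon^2/2}=\partial_t\ph(x,t)+o(1)$.
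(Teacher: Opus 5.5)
Your argument is correct. It shares the paper's skeleton: the Step 1 identity for $Q_\epsilon$, a uniform bound on the $M_\rho$-quotient, and a trade-off in $c$. The difference is how you resolve that trade-off.

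The paper's route is as follows. It fixes $\delta>0$ and uses only $E_1=o(1)$ together with $\epsilon^2M(\epsilon)^{1-\alpha}\to0$ to make the gradient quotient at most $\delta$ uniformly for all $c\in[m,M]$. It then evaluates $\inf_{c\in[m,M]}\{\alpha c^{1-\alpha}\delta+(1-\alpha)c^{-\alpha}\overline{\kappa}\}$ with Lemma \ref{infimum}, which gives $\delta^\alpha\overline{\kappa}^{1-\alpha}+o_\epsilon(1)$, and finally lets $\delta\to0$. The near-optimal parameter there is implicitly the fixed value $c=\overline{\kappa}/\delta$.

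Your route uses $\nabla\ph(x)=0$ quantitatively, through the second-order bound $\sup_{B_r(x)}\ph-\ph(x)\le\frac K2 r^2$, and tests the infimum at one explicit $c_\epsilon\to\infty$. This buys three things:
\begin{itemize}
\item it avoids both the double limit $\epsilon\to0$, $\delta\to0$ and Lemma \ref{infimum};
\item it needs smallness of the gradient quotient only at $c_\epsilon$, not uniformly in $c$;
\item it gives an explicit rate $O(\epsilon+c_\epsilon^{-\alpha})$.
\end{itemize}
The paper's route, in exchange, needs only first-order information on the $\sup/\inf$ terms. In your write-up you could drop the non-committal ``first option'' and go straight to $c_\epsilon$.

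Your caution about $c=M$ is justified under \eqref{HT} alone. Under the explicit choice \eqref{m-M}, however, $\epsilon^2M^{2(1-\alpha)}=\epsilon^{2-4/p}$ and $M^{-\alpha}=\epsilon^{2-4/p}$, both of which tend to $0$. So $c=M$ already works there, and it reproduces exactly the $\epsilon^{2-\frac4p}$ term that appears in Theorem \ref{exp-c3-grad}.
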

	\begin{proof}
		Before starting, recall that, as in Step 1 of the proof of Theorem \ref{exp-c2}, we obtain
		\begin{align*}\label{ec*}
			Q_\epsilon[\ph](x)
			&=
			\max\Big[
			\inf_{c\in[m,M]} \Big\{
			\alpha c^{1-\alpha}
			\frac{\sup_{B_{\epsilon^2 c^{1-\alpha}}(x)} \ph - \ph(x)}
			{\epsilon^2 c^{1-\alpha}}
			+ (1-\alpha) c^{-\alpha}
			\frac{M_{\epsilon c^{-\alpha/2}}[\ph](x) - \ph(x)}
			{\epsilon^2 c^{-\alpha}}
			\Big\},\,0
			\Big]\\
			&\quad
			+ \min\Big[
			\sup_{c\in[m,M]} \Big\{
			\alpha c^{1-\alpha}
			\frac{\inf_{B_{\epsilon^2 c^{1-\alpha}}(x)} \ph - \ph(x)}
			{\epsilon^2 c^{1-\alpha}}
			+ (1-\alpha) c^{-\alpha}
			\frac{M_{\epsilon c^{-\alpha/2}}[\ph](x) - \ph(x)}
			{\epsilon^2 c^{-\alpha}}
			\Big\},\,0
			\Big].
		\end{align*}
		Where we have written $Q_\epsilon[\ph](x)=\frac{\A_\epsilon[\ph](x)-\ph(x)}{\epsilon^2/2}$ as in the proof of Theorem \ref{exp-c2}.
		
		Assume $\ph\in C^2(B_R(x))$ has $\gr\ph(x)=0$ and fix $\delta>0$. Then, by \eqref{HT}, and the fact that the quotient of $M_{\epsilon c^{-\alpha/2}}$ is bounded, the identity above (for every $\epsilon\in (0,\beps)$) leads us to
		\begin{align*}
			Q_\epsilon[\ph](x)&\leq \inf_{c\in[m,M]}\left\{\alpha c^{1-\alpha}\delta + (1-\alpha)c^{-\alpha}\overline{\kappa}\right\}\leq \delta^\alpha\overline{\kappa}^{1-\alpha}+\alpha \delta m(\epsilon)^{1-\alpha}+(1-\alpha)\overline{\kappa}M(\epsilon)^{-\alpha}.
		\end{align*}
		Where we have used Lemma \ref{infimum}. Again, with a similar computation we can find a lower bound
		\begin{align*}
			Q_\epsilon[\ph](x)&\geq -\inf_{c\in[m,M]}\left\{\alpha c^{1-\alpha}\delta+(1-\alpha)c^{-\alpha}\overline{\kappa}\right\}\geq -\delta^\alpha \overline{\kappa}^{1-\alpha}-\alpha\delta m(\epsilon)^{1-\alpha}-(1-\alpha)\overline{\kappa}M(\epsilon)^{-\alpha}.
		\end{align*}
		Hence, taking limits as $\delta\to 0^+$, the asymptotic expansion {\it a)} is proven. 
		
		If $\ph\in C^2(B_R(x)\times [t-\tau,t+\tau])$ with $\gr\ph(x,t)=0$ the asymptotic expansion
		{\it b)} follows by Taylor expansion for $\ph(x,t+\epsilon^2/2)$ as in Theorem \ref{exp-c2}.
	\end{proof}
	
	Now we show that, under stronger regularity assumptions, namely $\ph\in C^3$, one can obtain a more refined version of the asymptotic expansions provided by Theorems \ref{exp-c2} and \ref{exp-c2-grad}. Specifically, it becomes possible to estimate the error term appearing in those expressions in terms of powers of $\epsilon$.

	\begin{thm}\label{exp-c3}
		Let $d\geq 1$, $p>2$, $x\in\Rd$, $t>0$, $R>0$ and assume \eqref{m-M} is satisfied. Then, the following statements holds:
		\begin{enumerate}[label=\alph*), leftmargin=*]
			\item Let $\ph\in C^3(B_{R}(x))$ with $\gr \ph(x)\not=0$. Then, there exists $0<\beps<R$ and two positive constants $k_1$ and $k_2$ depending on $d$ and $p$ such that for all $\epsilon\in(0,\beps)$ we have
			\begin{equation}\label{elliptic-formulac3}
				\A_\epsilon[\ph](x)=\ph(x)+\frac{\epsilon^2}{2}\left(\plap \ph(x)\right)^{\frac{1}{p-1}}+\epsilon^2 E(\epsilon,\ph,x),
			\end{equation}
			with the bound
			\begin{equation}\label{error}
				\left|E(\epsilon,\ph,x)\right|\leq k_1 \epsilon^{2-\frac{4}{p}}\|D^2\ph\|_{\LL^\infty(B_{\beps}(x))}\\
				+ k_2 \epsilon^{\frac{2}{3p-4}}\left(\|\gr\ph\|_{\LL^\infty(B_{\beps}(x))}+\|D^3\ph\|_{\LL^\infty(B_{\beps}(x))}+\frac{|D^2\ph(x)|^2}{|\gr\ph(x)|}\right).
			\end{equation}
			\item Let $\tau>0$ and $\ph\in C^3(B_{R}(x)\times [t-\tau,t+\tau])$ with $\gr\ph(x,t)\not=0$. Then, there exists $0<\beps<\min[R,\tau]$ such that for all $\epsilon\in(0,\beps)$ we have
			\begin{equation}\label{formula-c3}
				\A_\epsilon[\ph](x,t)=\ph(x,t+\epsilon^2/2)+\frac{\epsilon^2}{2}\left(\left(\plap \ph(x,t)\right)^{\frac{1}{p-1}}-\partial_t \ph(x,t)\right)+\epsilon^2 \widehat{E}(\epsilon,\ph,x,t),
			\end{equation}
			where
			\begin{equation}\label{error-para}
				\begin{split}
					\left|\widehat{E}(\epsilon,\ph,x,t)\right|\leq& \epsilon^2 \|\partial_{tt}\ph\|_{\LL^\infty([t-\beps^2/2,t+\beps^2/2])}+ k_1 \epsilon^{2-\frac{4}{p}}\|D^2\ph\|_{\LL^\infty(B_{\beps}(x))}\\
					&+ k_2 \epsilon^{\frac{2}{3p-4}}\left(\|\gr\ph\|_{\LL^\infty(B_{\beps}(x))}+\|D^3\ph\|_{\LL^\infty(B_{\beps}(x))}+\frac{|D^2\ph(x,t)|^2}{|\gr\ph(x,t)|}\right).
				\end{split}
			\end{equation}
		\end{enumerate}
	\end{thm}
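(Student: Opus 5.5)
We follow the proof of Theorem \ref{exp-c2}, this time keeping track of every error term quantitatively. Write $Q_\epsilon[\ph](x)=(\A_\epsilon[\ph](x)-\ph(x))/(\epsilon^2/2)$. As in Step 1 there, we express $Q_\epsilon$ through the difference quotients $(\sup_{B_{r}}\ph-\ph(x))/r$ with $r=\epsilon^2c^{1-\alpha}$, and $(M_\rho[\ph](x)-\ph(x))/\rho^2$ with $\rho=\epsilon c^{-\alpha/2}$. We then replace the $o(\cdot)$ terms by the explicit remainders of Lemma \ref{exp-gradient} and Theorem \ref{exp-plapN}:
\[
|E_1|\le k\,\epsilon^2c^{1-\alpha}\|D^2\ph\|_{\LL^\infty},\qquad |E_2|\le k\,\epsilon c^{-\alpha/2}\Big(\|D^3\ph\|_{\LL^\infty}+\tfrac{|D^2\ph(x)|^2}{|\gr\ph(x)|}\Big).
\]
Both are valid uniformly for $c\in[m,M]$ once $\beps$ is chosen as in Theorem \ref{well-def}, so that all the balls lie in $B_{\beps}(x)$.

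Since $a\mapsto\max[a,0]$ and $a\mapsto\min[a,0]$ are $1$-Lipschitz, and $\inf_c,\sup_c$ are $1$-Lipschitz with respect to sup-norm perturbations, the first step is the bound
\[
\Big|Q_\epsilon[\ph](x)-\Big(\max\big[I^+,0\big]-\max\big[I^-,0\big]\Big)\Big|\le 2\sup_{c\in[m,M]}\Big(\alpha c^{1-\alpha}|E_1|+(1-\alpha)c^{-\alpha}|E_2|\Big).
\]
Here $I^\pm=\inf_{c\in[m,M]}\{\alpha c^{1-\alpha}|\gr\ph(x)|+(1-\alpha)c^{-\alpha}(\pm\plapN\ph(x))\}$ are the exact infima. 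The right-hand side is at most
\[
C\Big(\epsilon^2M^{2-2\alpha}\|D^2\ph\|+\epsilon\, m^{-3\alpha/2}\big(\|D^3\ph\|+|D^2\ph(x)|^2/|\gr\ph(x)|\big)\Big).
\]
With $\alpha=\frac{p-2}{p-1}$ and \eqref{m-M}, we have $M^{2-2\alpha}=\epsilon^{(-2+2/p)\cdot 2/(p-1)}=\epsilon^{-4/p}$, which gives the term $\epsilon^{2-4/p}$. Likewise $\epsilon\, m^{-3\alpha/2}=\epsilon^{1-\frac{3(p-2)}{3p-4}}=\epsilon^{\frac{2}{3p-4}}$. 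These exponents confirm that \eqref{m-M} is tuned exactly for this computation.

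Next we compare $\max[I^+,0]-\max[I^-,0]$ with $|\gr\ph(x)|^\alpha(\plapN\ph(x))^{1-\alpha}=(\plap\ph(x))^{1/(p-1)}$. We split into the cases $\plapN\ph(x)>0$ and $\plapN\ph(x)\le0$ as in Step 2. In each case exactly one of the two infima is nonnegative, and Lemma \ref{infimum}, estimate \eqref{error-3}, bounds its distance to the geometric mean by $\alpha|\gr\ph(x)|m^{1-\alpha}+(1-\alpha)|\plapN\ph(x)|M^{-\alpha}$. The other infimum is negative. To see this, note that $I^-$ is at most its value at the endpoint $c=m$, which is negative when $|\gr\ph|m^{1-\alpha}$ is small compared with $m^{-\alpha}|\plapN\ph|$; this is the subtle point. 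We expect this to be the main obstacle: if $\plapN\ph(x)$ is tiny, the "wrong" infimum may be positive. In that case we will bound it directly by its value at $c=m$, namely at most $\alpha m^{1-\alpha}|\gr\ph(x)|$, which is an error of the same type as in \eqref{error-3}. Now $m^{1-\alpha}=\epsilon^{\frac{2}{3p-4}}$, giving the $\|\gr\ph\|$ term. Also $M^{-\alpha}|\plapN\ph|\le C\epsilon^{(2-2/p)\frac{p-2}{p-1}}\|D^2\ph\|$, and we check that this is dominated by $\epsilon^{2-4/p}\|D^2\ph\|$ (for $\epsilon<1$ the exponent $2(p-2)/p$ equals $2-4/p$). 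Collecting all terms and dividing by $2$ yields \eqref{error}.

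For part b) we apply a) at fixed $t$. We combine it with the Taylor expansion $\ph(x,t+\epsilon^2/2)=\ph(x,t)+\frac{\epsilon^2}{2}\partial_t\ph(x,t)+\frac{\epsilon^4}{8}\partial_{tt}\ph(x,s)$ for some $s$ in the time interval. This adds exactly the term $\epsilon^2\|\partial_{tt}\ph\|$ to $\widehat E$, and \eqref{error-para} follows.
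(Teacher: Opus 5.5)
Your argument is correct and is essentially the paper's proof. You split the error into $\sup_{c\in[m,M]}\{\alpha c^{1-\alpha}|E_1|+(1-\alpha)c^{-\alpha}|E_2|\}$ plus the truncation error from Lemma \ref{infimum}, and \eqref{m-M} converts these into $\epsilon^{2-4/p}$ and $\epsilon^{2/(3p-4)}$.

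Your explicit bound $0\le\max[I^{\mp},0]\le\alpha m^{1-\alpha}|\gr\ph(x)|$ for the wrong-sign infimum makes rigorous a point the paper passes over quickly. The paper invokes \eqref{error-3}, which is stated only for $a,b\ge 0$, for that term too.

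One minor caveat, which the paper's statement shares: your claim that all balls lie in $B_{\beps}(x)$ is not literally true. The noise ball $B_{\gamma\epsilon c^{-\alpha/2}}(x)$ has radius up to $\gamma\epsilon^{(2p-2)/(3p-4)}$, which exceeds $\beps$ for $\epsilon$ close to $\beps$. The norms should therefore be taken on a fixed ball containing all the averaging balls (e.g.\ $B_R(x)$); nothing else in your argument changes.
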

	\begin{proof}
		We split the proof in several steps as in Theorem \ref{exp-c2}.
		
		\textit{Step 1:} We will show the following asymptotic expansion for $\A_\epsilon[\ph](x)$,
		\begin{equation}\label{exp-c3-step2}
			\A_\epsilon[\ph](x)-\ph(x)+\epsilon^2 E(\epsilon,\ph,x)=\frac{\epsilon^2}{2}\left(\plap\ph(x)\right)^{\frac{1}{p-1}}.
		\end{equation}
		On the one hand, we write the $p$-Laplacian raised to the (signed) power $1-\alpha$ as its positive part minus its negative part, and apply Lemma~\ref{infimum} to each of them,
		\begin{align*}
			|\gr\ph(x)|^\alpha \left(\plapN\ph(x)\right)^{1-\alpha}=& \max\Big[\inf_{c\in[m,M]}\left\{\alpha c^{1-\alpha}|\gr\ph(x)|+(1-\alpha)c^{-\alpha}\plapN\ph(x)\right\},0\Big]\\
			&-\max\Big[\inf_{c\in[m,M]}\left\{\alpha c^{1-\alpha}|\gr\ph(x)|+(1-\alpha)c^{-\alpha}\left(-\plapN\ph(x)\right)\right\},0\Big]+ E_3(\epsilon,\ph,x).
		\end{align*} 
		Here, $E_3(\epsilon,\ph,x)$ is the distance between the infimum of $c>0$ and the infimum of $c\in[m(\epsilon), M(\epsilon)]$, it can be computed from \eqref{error-3}. Observe that the previous formula holds since, as observed before, depending on the sign of the normalized $p$-Laplacian, one of the maximums always vanish when taking the infimum for $c>0$. On the other hand, using Lemma \ref{exp-gradient} and Theorem \ref{exp-plapN} we obtain
		\begin{align*}
			\frac{\epsilon^2}{2}  |\gr \ph(x)|^\alpha (\plapN \ph(x))^{1-\alpha} 
			=&\frac{\epsilon^2}{2} E_3(\epsilon, \ph,x)+ \frac{1}{2} \max \Big[ 
			\inf_{c \in [m,M]} \Big\{ 
			\alpha \big( \!\!\sup_{B_{\epsilon^2 c^{1-\alpha}}(x)} \!\! \ph - \ph(x) \big) 
			- \alpha \epsilon^2 c^{1-\alpha} E_1(\epsilon^2 c^{1-\alpha},\ph,x) \\
			&  \qquad \quad + (1-\alpha) \big( M_{\epsilon c^{-\alpha/2}}[\ph](x) - \ph(x) \big) 
			- (1-\alpha) \epsilon^2 c^{-\alpha} E_2(\epsilon c^{-\alpha/2},\ph,x) 
			\Big\}, 0 \Big] \\
			&  \qquad  + \frac{1}{2} \min \Big[ 
			\sup_{c \in [m,M]} \Big\{ 
			\alpha \big( \inf_{B_{\epsilon^2 c^{1-\alpha}}(x)} \ph - \ph(x) \big) + \alpha \epsilon^2 c^{1-\alpha} E_1(\epsilon^2 c^{1-\alpha},\ph,x) \\
			&  \qquad \quad + (1-\alpha) \big( M_{\epsilon c^{-\alpha/2}}[\ph](x) - \ph(x) \big) 
			- (1-\alpha) \epsilon^2 c^{-\alpha} E_2(\epsilon c^{-\alpha/2},\ph,x) 
			\Big\}, 0 \Big] \\
			=& \A_\epsilon[\ph](x) - \ph(x) + \epsilon^2 E(\epsilon, \ph,x).
		\end{align*}
		Thus, formula \eqref{exp-c3-step2} will be proven once we  obtain an estimate for the error.
		
		\textit{Step 2: Estimate the error $E(\epsilon,\ph,x)$.} Recal that $\alpha=\frac{p-2}{p-1}$, $m(\epsilon)=\epsilon^{\frac{2(p-1)}{3p-4}}$ and  $M(\epsilon)=\epsilon^{-2+\frac{2}{p}}$.
		From Step 1, we know that
		\begin{equation}
			\left|E(\epsilon,\ph,x)\right|\leq\sup_{c\in[m,M]}\Big\{\alpha c^{1-\alpha} \left|E_1(\epsilon^2 c^{1-\alpha},\ph,x)\right|+(1-\alpha)c^{-\alpha}\left|E_2(\epsilon c^{-\alpha/2},\ph,x)\right|\Big\}+\left|E_3(\epsilon,\ph,x)\right|.
		\end{equation}
		In order to conclude the proof, we bound each error separately. We start with $E_3$ using Lemma \ref{infimum} to get
		\begin{align*}
			\left|E_3(\epsilon,\ph,x)\right|&\leq \alpha \|\gr\ph\|_{\LL^\infty(B_{\beps}(x))} m(\epsilon)^{1-\alpha}+(1-\alpha)\|D^2\ph\|_{\LL^\infty(B_{\beps}(x))} M(\epsilon)^{-\alpha}.
		\end{align*} 
		Now,  we are going to find an estimate for the supremum term for which we need Lemma \ref{exp-gradient} and Theorem \ref{exp-plapN},
		\begin{align*}
			\sup_{c\in[m,M]} & \Big\{ \alpha c^{1-\alpha} \big| E_1(\epsilon^2 c^{1-\alpha}, \ph,x) \big| 
			+ (1-\alpha) c^{-\alpha} \big| E_2(\epsilon c^{-\alpha/2}, \ph,x) \big| \Big\} \\
			&\leq \sup_{c\in[m,M]} \Big\{ \alpha k_1 \epsilon^2 c^{2(1-\alpha)} 
			\| D^2 \ph \|_{\LL^\infty(B_{\beps}(x))} 
			+ (1-\alpha) k_2 \epsilon c^{-3\alpha/2} \Big( \| D^3 \ph \|_{\LL^\infty(B_{\beps}(x))} 
			+ \frac{| D^2 \ph(x) |^2}{|\gr \ph(x)|} \Big) \Big\} \\
			&\leq \alpha k_1 \epsilon^2 M(\epsilon)^{2(1-\alpha)} \| D^2 \ph \|_{\LL^\infty(B_{\beps}(x))} 
			+ (1-\alpha) k_2 \epsilon m(\epsilon)^{-3\alpha/2} 
			\Big( \| D^3 \ph \|_{\LL^\infty(B_{\beps}(x))} + \frac{| D^2 \ph(x) |^2}{|\gr \ph(x)|} \Big)\\
			&\leq k_1 \epsilon^{2-\frac{4}{p}} \| D^2 \ph \|_{\LL^\infty(B_{\beps}(x))} 
			+ k_2 \epsilon^{\frac{2}{3p-4}} 
			\Big( \| D^3 \ph \|_{\LL^\infty(B_{\beps}(x))} + \frac{| D^2 \ph(x) |^2}{|\gr \ph(x)|} \Big).
		\end{align*}
		By combining the bounds for the two components of $E(\epsilon,\ph,x)$, the desired estimate follows. 
		
		\textit{Step 3: Parabolic case.} Again from a Taylor expansion we obtain 
		$$ \frac{\ph(x,t+\epsilon^2/2)-\ph(x,t)}{\epsilon^2/2}=\partial_t \ph(x,t)+R(\epsilon,\ph,x,t),$$
		where $ \left|R(\epsilon,\ph,x,t)\right|\leq \frac{\epsilon^2}{4}\|\partial_{tt}\ph\|_{\LL^\infty([t-\tau,t+\tau])}$. 
		Then, using the elliptic case for any $t>0$ fixed we finish the proof as in Theorem \ref{exp-c2}.
	\end{proof}
	
	The importance of this result lies in the fact that it allows us to estimate the error uniformly with respect to the point $(x,t)$ in a fixed neighbourhood. 
	The difficulty that now arises is that, if the gradient vanishes at the point, one would require an estimate that does not blow up.
	\begin{thm}\label{exp-c3-grad}
		Assume the same hypotheses of Theorem \ref{exp-c3}.
		\begin{enumerate}[label=\alph*), leftmargin=*]
			\item If $\ph\in C^3(B_R(x))$ and its gradient can vanish or not. Then, there exists $0<\beps<R$ such that for all $\epsilon\in(0,\beps)$ 
			\begin{equation}\label{elliptic-formulac3-grad}
				\A_\epsilon[\ph](x)=\ph(x)+\frac{\epsilon^2}{2} \left(\plap \ph(x)\right)^{\frac{1}{p-1}}+\epsilon^2 E(\epsilon,\ph,x),
			\end{equation}
			with
			\begin{equation}\label{error-grad}
				\left|E(\epsilon,\ph,x)\right|\leq k_3 \left( \|D^2\ph\|_{\LL^\infty(B_{\beps}(x))}^{1-\alpha}\left|\gr \ph(x)\right|^\alpha+ \epsilon^{2-\frac{4}{p}} \|D^2\ph\|_{\LL^\infty(B_{\beps}(x))}+\epsilon^{\frac{2}{3p-4}}\left|\gr\ph(x)\right|\right)
			\end{equation}
			where $k_3$ is a constant that only depends on $p$ and $d$.
			
			\item Moreover, assume that $\ph\in C^3(B_R(x)\times [t-\tau,t+\tau])$, then, there exists $0<\beps<\min[R,\tau]$ such that for all $\epsilon\in(0,\beps)$,
			\begin{equation}\label{formula-c3.88}
				\A_\epsilon[\ph](x,t)=\ph(x,t+\epsilon^2/2)+\frac{\epsilon^2}{2}\left(\left(\plap \ph(x,t)\right)^{\frac{1}{p-1}}-\partial_t \ph(x,t)\right)+\epsilon^2 \widehat{E}(\epsilon,\ph,x,t),
			\end{equation}
			with an error that can be estimated as follows
			\begin{equation}\label{error-para-grad}
				\begin{split}
					\Big|\widehat{E}(\epsilon,\ph,x,t)\Big|\leq & \epsilon^2 \|\partial_{tt} \ph \|_{\LL^\infty([t-\beps^2/2,t+\beps^2/2])}\\
					& + k_3 \left( \|D^2\ph\|_{\LL^\infty(B_{\beps}(x))}^{1-\alpha} \left|\gr \ph(x)\right|^\alpha+ \epsilon^{2-\frac{4}{p}} \|D^2\ph\|_{\LL^\infty(B_{\beps}(x))}+\epsilon^{\frac{2}{3p-4}}\left|\gr\ph(x)\right|\right).
				\end{split}
			\end{equation}
		\end{enumerate}
	\end{thm}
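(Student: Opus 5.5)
The plan is to avoid the asymptotic expansion of $M_\rho$ from Theorem \ref{exp-plapN}, whose error blows up as $\gr\ph(x)\to0$. I would compare both $\A_\epsilon[\ph](x)-\ph(x)$ and $\frac{\epsilon^2}{2}(\plap\ph(x))^{\frac{1}{p-1}}$ directly with the quantity $|\gr\ph(x)|^\alpha\|D^2\ph\|^{1-\alpha}$, using only crude bounds. Write $K=\|D^2\ph\|_{\LL^\infty(B_{\beps}(x))}$ and $Q_\epsilon[\ph](x)=\frac{\A_\epsilon[\ph](x)-\ph(x)}{\epsilon^2/2}$, as in the proof of Theorem \ref{exp-c2}. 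It suffices to show
\[
|Q_\epsilon[\ph](x)|\le C\big(K^{1-\alpha}|\gr\ph(x)|^\alpha+\epsilon^{2-\frac4p}K+\epsilon^{\frac{2}{3p-4}}|\gr\ph(x)|\big).
\]
Indeed, since $|\plapN\ph(x)|\le (p-1+d)K$, we have $|(\plap\ph(x))^{\frac1{p-1}}|=|\gr\ph(x)|^\alpha|\plapN\ph(x)|^{1-\alpha}\le CK^{1-\alpha}|\gr\ph(x)|^\alpha$. Then $E=\frac12\big(Q_\epsilon-(\plap\ph)^{\frac1{p-1}}\big)$ is bounded by the same right-hand side, which gives \eqref{error-grad}.

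For the bound on $Q_\epsilon$, I would start from the representation in Step 1 of Theorem \ref{exp-c2}. By Lemma \ref{exp-gradient} with \eqref{error-1}, the gradient quotient satisfies
\[
\frac{\sup_{B_{r}(x)}\ph-\ph(x)}{r}\le |\gr\ph(x)|+k_1 r K,\qquad r=\epsilon^2c^{1-\alpha},
\]
and the same holds for the infimum version. For the $M_\rho$ quotient I would use the crude estimate $|M_\rho[\ph](x)-\ph(x)|\le \rho^2 \bar\kappa K$ stated before Theorem \ref{exp-c2-grad}, with $\rho=\epsilon c^{-\alpha/2}$. Since $\max[\cdot,0]$ and $\min[\cdot,0]$ are monotone, this gives
\[
|Q_\epsilon|\le \inf_{c\in[m,M]}\big\{\alpha c^{1-\alpha}(|\gr\ph(x)|+k_1\epsilon^2c^{1-\alpha}K)+(1-\alpha)c^{-\alpha}\bar\kappa K\big\},
\]
where the first term bounds the max part from above and the second bounds the min part from below.

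Next I would bound the infimum. First I split off the term $\alpha k_1\epsilon^2c^{2(1-\alpha)}K\le \alpha k_1\epsilon^2M^{2(1-\alpha)}K=C\epsilon^{2-\frac4p}K$, exactly as in Step 2 of Theorem \ref{exp-c3}. Then I apply \eqref{error-3} of Lemma \ref{infimum} with $a=|\gr\ph(x)|$ and $b=\bar\kappa K$. This bounds the remaining infimum by $a^\alpha b^{1-\alpha}+\alpha a m^{1-\alpha}+(1-\alpha)bM^{-\alpha}$. Here $m^{1-\alpha}=\epsilon^{\frac{2}{3p-4}}$, and $M^{-\alpha}=\epsilon^{(2-\frac2p)\frac{p-2}{p-1}}$. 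One checks that the latter is at most $\epsilon^{2-\frac4p}$: indeed $(2-\frac2p)\frac{p-2}{p-1}=\frac{2(p-2)}{p}$, so the two are equal. Collecting the terms gives the claimed bound, and part (a) follows.

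Part (b) then follows as in Step 3 of Theorem \ref{exp-c3}. The Taylor remainder for $\ph(x,t+\epsilon^2/2)$ contributes the $\|\partial_{tt}\ph\|$ term, and the elliptic estimate is applied at fixed $t$. The main obstacle is the first step: the $M_\rho$ term must be controlled by $K$ alone, with no gradient in the denominator. A second point to check is that $E_1$ needs only $C^2$, so the bound really involves only $K$ and $|\gr\ph(x)|$. No case distinction between $\gr\ph(x)=0$ and $\gr\ph(x)\neq0$ is needed.
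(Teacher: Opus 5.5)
Your proposal is correct and follows essentially the same route as the paper. You bound $Q_\epsilon[\ph](x)$ above by its max-part and below by its min-part, and control the sup/inf quotient via Lemma \ref{exp-gradient} and the $M_\rho$-quotient crudely by $\|D^2\ph\|$. You then apply \eqref{error-3} and use $|(\plap\ph(x))^{\frac{1}{p-1}}|\le C\,|\gr\ph(x)|^\alpha\|D^2\ph\|^{1-\alpha}$, with part (b) coming from the Taylor expansion in time.

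Your bookkeeping is cleaner than the paper's displayed chain. You keep the constants in the crude bounds for $M_\rho$ and $\plapN\ph$, which the paper suppresses. You also correctly obtain $\alpha\, m(\epsilon)^{1-\alpha}|\gr\ph(x)|=\alpha\,\epsilon^{\frac{2}{3p-4}}|\gr\ph(x)|$, matching the statement, where the paper's proof writes $\epsilon^{2-\frac{4}{p}}$.
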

	\begin{proof}
		Since the underlying ideas are the same as in the previous results, we present only the key details. Essentially, we need to bound from above and below the quotient $Q_\epsilon[\ph](x)$ as follows,
		\begin{align*}
			Q_\epsilon&[\ph](x)
			\leq
			\max\Bigg[
			\inf_{c\in[m,M]} \Bigg\{
			\alpha c^{1-\alpha} \frac{\sup_{B_{\epsilon^2 c^{1-\alpha}}(x)} \ph
				- \ph(x)}{\epsilon^2 c^{1-\alpha}}
			+ (1-\alpha)c^{-\alpha}
			\|D^2\ph\|_{\LL^\infty(B_{\beps}(x))}
			\Bigg\},\,0
			\Bigg]\\
			&\leq \inf_{c \in [m,M]}\left\{\alpha c^{1-\alpha}\left(| \gr\ph(x)|+k_1\alpha \epsilon^2 c^{2(1-\alpha)}\|D^2\ph\|_{\LL^\infty(B_{\beps}(x))}\right)+(1-\alpha)c^{-\alpha}\|D^2\ph\|_{\LL^\infty(B_{\beps}(x))}\right\}\\
			&\leq |\gr\ph(x)|^\alpha \|D^2\ph\|_{\LL^\infty(B_{\beps}(x))}^{1-\alpha}+\alpha \epsilon^{2-\frac{4}{p}}|\gr\ph(x)|+(1-\alpha)\epsilon^{2-\frac{4}{p}}\|D^2\ph\|_{\LL^\infty(B_{\beps}(x))}+k_1 \alpha \epsilon^{2-\frac{4}{p}} \|D^2\ph\|_{\LL^\infty(B_{\beps}(x))},
		\end{align*}
		where we used estimate \eqref{error-3} in the last line.
		
		The lower bound is analogous and therefore,
		the estimate follows since we have $$\Big|\plap\ph(x)^{\frac{1}{p-1}}\Big|\leq \left|\gr\ph(x)\right|^\alpha \|D^2\ph\|_{\LL^\infty(B_{\beps}(x))}^{1-\alpha}.$$
	\end{proof}
	
	\begin{rem}\rm\label{rem-error}
		This new error estimate is, a priori, weaker than the previous one, since it is not of order $o(\epsilon^2)$. Nevertheless, it is useful because it remains valid at points where the gradient vanishes, yielding the following error bound.
		$$\left|E(\epsilon,\ph,x,t)\right|\leq k_3 \epsilon^{2-\frac{4}{p}}\| D^2\ph\|_{\LL^\infty(B_{\beps}(x))}.$$
		Moreover, in a neighbourhood of a point at which the gradient vanishes, namely in a ball where $\left|\gr\ph(x,t)\right|\leq \epsilon^\gamma$ for some $\gamma$, we recover a sharp error estimate that is uniform and independent of the particular point within that neighbourhood, as Theorem \ref{exp-c3} 
		can be used for neighbourhoods of points where the gradient does not vanish.
	\end{rem}
	
	As a consequence of the previous asymptotic expansions, we establish a characterization of smooth subsolutions and supersolutions of equation \eqref{equation.intro}.
	\begin{cor}\label{characterizacion-1}
		Let $d\geq1$, $p>2$, $x\in\Rd$, $t>0$ and $R,\tau>0$ small enough. Let $\ph\in C^3(B_R(x)\times [t-\tau,t+\tau])$ and assume that \eqref{m-M} holds. Then, the following equivalences are satisfied:
		\begin{equation}
			\partial_t \ph(x,t)\geq \left(\plap\ph(x,t)\right)^{\frac{1}{p-1}} \Longleftrightarrow \A_\epsilon[\ph](x,t)\leq \ph(x,t+\epsilon^2/2)+o(\epsilon^2)\hspace{3mm}\mbox{as }\epsilon\to 0^+.
		\end{equation}
		\begin{equation}
			\partial_t \ph(x,t)\leq \left(\plap\ph(x,t)\right)^{\frac{1}{p-1}} \Longleftrightarrow \A_\epsilon[\ph](x,t)\geq \ph(x,t+\epsilon^2/2)+o(\epsilon^2)\hspace{3mm}\mbox{as }\epsilon\to 0^+.
		\end{equation}
	\end{cor}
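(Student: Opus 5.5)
The plan is to read both equivalences directly off the parabolic asymptotic expansion, without case distinctions. Theorem \ref{exp-c2}\,b) covers $\gr\ph(x,t)\neq0$ and Theorem \ref{exp-c2-grad}\,b) covers $\gr\ph(x,t)=0$; in the latter case $(\plap\ph(x,t))^{\frac{1}{p-1}}=0$. Together they give, for every $\ph\in C^3\subset C^2$,
\[
\A_\epsilon[\ph](x,t)=\ph(x,t+\epsilon^2/2)+\frac{\epsilon^2}{2}\Big(\big(\plap \ph(x,t)\big)^{\frac{1}{p-1}}-\partial_t\ph(x,t)\Big)+o(\epsilon^2)
\]
as $\epsilon\to0^+$. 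Set $D:=\big(\plap \ph(x,t)\big)^{\frac{1}{p-1}}-\partial_t\ph(x,t)$, which is a fixed real number independent of $\epsilon$. Then
\[
\A_\epsilon[\ph](x,t)-\ph(x,t+\epsilon^2/2)=\tfrac{\epsilon^2}{2}D+o(\epsilon^2).
\]

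\emph{Forward direction of the first equivalence.} Suppose $\partial_t\ph(x,t)\ge(\plap\ph(x,t))^{\frac{1}{p-1}}$, so that $D\le0$. Then
\[
\A_\epsilon[\ph](x,t)\le \ph(x,t+\epsilon^2/2)+o(\epsilon^2).
\]

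\emph{Converse of the first equivalence.} Suppose there is a function $\eta(\epsilon)=o(\epsilon^2)$ with $\A_\epsilon[\ph](x,t)-\ph(x,t+\epsilon^2/2)\le\eta(\epsilon)$. Combining this with the expansion gives $\tfrac{\epsilon^2}{2}D\le \eta(\epsilon)+o(\epsilon^2)$. Dividing by $\epsilon^2/2$ and letting $\epsilon\to0^+$ yields $D\le0$, which is exactly the desired inequality for $\partial_t\ph(x,t)$.

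\emph{Second equivalence.} It follows by the same argument with all inequalities reversed. Alternatively, apply the first equivalence to $-\ph$, using the odd symmetries $\A_\epsilon[-\ph]=-\A_\epsilon[\ph]$ (noted in Step 3 of Theorem \ref{exp-c2}) and $(\plap[-\ph])^{\frac{1}{p-1}}=-(\plap\ph)^{\frac{1}{p-1}}$.

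\emph{Main obstacle.} The only delicate point is to state precisely what ``$\le \ph+o(\epsilon^2)$'' means: there exists some function $\eta$ with $\eta(\epsilon)/\epsilon^2\to0$. The real difficulty, namely treating a vanishing gradient and an unknown sign of the $p$-Laplacian, is already absorbed in Theorems \ref{exp-c2} and \ref{exp-c2-grad}, since these give a single formula valid at every point. The $C^3$ assumption is not even needed beyond $C^2$. One could also invoke Theorem \ref{exp-c3-grad} for a quantitative error, but the $o(\epsilon^2)$ statements suffice here.
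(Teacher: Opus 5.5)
Your proposal is correct and follows the paper's proof: both read the two equivalences directly off the parabolic asymptotic expansion, with the sign of $D=(\plap\ph(x,t))^{\frac{1}{p-1}}-\partial_t\ph(x,t)$ deciding each direction. The only difference is that you use the qualitative $C^2$ expansions (Theorems \ref{exp-c2} and \ref{exp-c2-grad}) where the paper cites the $C^3$ versions (Theorems \ref{exp-c3} and \ref{exp-c3-grad}). This is harmless, and it shows that $C^2$ regularity suffices, which is what Corollary \ref{characterizacion-2} implicitly needs when it applies this result to $C^2$ test functions.
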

	\begin{proof}
		The proof is straightforward and relies on the asymptotic expansions established in Theorems \ref{exp-c3} and \ref{exp-c3-grad}. Indeed, the sign of $\left(\plap\ph(x,t)\right)^{\frac{1}{p-1}}-\partial_t\ph(x,t)$ determines whether the operator $\A_\epsilon[\ph](x,t)$ is asymptotically smaller or larger than $\ph(x,t+\epsilon^2/2)$.
	\end{proof}
	
	To conclude this section, we provide asymptotic mean value characterizations of viscosity solutions, see Definitions \ref{viscosity-equation} and \ref{viscosity-operator}, to equation \eqref{equation.intro}. Note that no a priori regularity assumptions on $u$ are made (beyond continuity).

	\begin{cor}\label{characterizacion-2}
		Let $d\geq 1$, $p>2$, and let $\Omega\subset \Rd$ be an open set. Assume that \eqref{m-M} holds. Then a lower semicontinuous and upper semicontinuous function $u$ is a viscosity solution to
		\begin{equation*}
			(\partial_t u(x,t))^{p-1}=\plap u(x,t)\hspace{5mm}\mbox{for all }(x,t)\in \Omega\times \RR^+,
		\end{equation*}
		if and only if $u$ is a viscosity solution to the AMVF,
		\begin{equation}
			u(x,t+\epsilon^2/2)=\A_\epsilon[u](x,t)+o(\epsilon^2)\hspace{3mm}\mbox{as }\epsilon\to 0^+.
		\end{equation}
	\end{cor}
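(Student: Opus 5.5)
The plan is to reduce the statement to Corollary \ref{characterizacion-1}, applied at the level of test functions. Both notions of viscosity solution (Definitions \ref{viscosity-equation} and \ref{viscosity-operator} in Appendix \ref{ap:viscosity}) are formulated via smooth test functions touching $u$ from above or below at a point $(x_0,t_0)\in\Omega\times\RR^+$. I will show that, for each such test function, the pointwise condition required by one definition holds if and only if the condition required by the other holds. It suffices to treat the subsolution case, where $\ph$ touches $u$ from above at $(x_0,t_0)$; the supersolution case follows symmetrically, or by applying the argument to $-u$ and using the oddness $\A_\epsilon[-\ph]=-\A_\epsilon[\ph]$ noted in Step 3 of the proof of Theorem \ref{exp-c2}.

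First I fix conventions. A viscosity subsolution of the PDE requires $\partial_t\ph(x_0,t_0)\le(\plap\ph(x_0,t_0))^{\frac{1}{p-1}}$ for every $\ph\in C^2$ (which we may take $C^3$, by the usual density reduction) such that $u-\ph$ has a strict local maximum at $(x_0,t_0)$ with $\ph(x_0,t_0)=u(x_0,t_0)$. Here I use the equivalent form $(\partial_t u)^{p-1}=\plap u\iff\partial_t u=(\plap u)^{1/(p-1)}$, which holds because the signed power is monotone. A viscosity subsolution of the AMVF requires, for the same class of test functions, $\A_\epsilon[\ph](x_0,t_0)\ge\ph(x_0,t_0+\epsilon^2/2)+o(\epsilon^2)$. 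With these conventions, the second equivalence in Corollary \ref{characterizacion-1} gives exactly that the two conditions coincide for each fixed $C^3$ test function. Both directions of the corollary therefore follow at once.

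The step I expect to be the main obstacle is making sure the test-function classes match. Concretely, there are two issues.
\begin{itemize}
\item \emph{Regularity class.} If the definitions use $C^2$ test functions, I would instead invoke Theorems \ref{exp-c2} and \ref{exp-c2-grad}. These give $\A_\epsilon[\ph](x_0,t_0)=\ph(x_0,t_0+\epsilon^2/2)+\frac{\epsilon^2}{2}\big((\plap\ph)^{\frac{1}{p-1}}-\partial_t\ph\big)(x_0,t_0)+o(\epsilon^2)$ for every $C^2$ function, whether or not the gradient vanishes, and that expansion suffices to obtain the equivalence.
\item \emph{Vanishing gradient.} Some definitions for the $p$-Laplacian treat points with $\nabla\ph(x_0,t_0)=0$ separately. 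Since $p>2$, the operator $(\plap\ph)^{1/(p-1)}$ extends continuously by $0$ at such points, and Theorem \ref{exp-c2-grad} yields exactly the expansion with $\plap\ph=0$. Hence no special treatment is needed and the equivalence persists.
\end{itemize}
Finally, touching (non-strict) versus strict extrema is handled by the standard perturbation $\ph+|x-x_0|^4+|t-t_0|^2$. This perturbation changes neither $\partial_t\ph$, $\nabla\ph$ nor $D^2\ph$ at $(x_0,t_0)$, so both conditions are unaffected.
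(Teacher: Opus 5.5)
Your proposal is correct and matches the paper's proof, which also argues test function by test function via Corollary \ref{characterizacion-1} in both directions; your use of Theorems \ref{exp-c2} and \ref{exp-c2-grad} for $C^2$ test functions is slightly more careful, since the paper applies Corollary \ref{characterizacion-1}, stated for $C^3$ functions, to $C^2$ test functions. The pairing you fix for the AMVF (touching from above with $\A_\epsilon[\ph](x_0,t_0)\ge\ph(x_0,t_0+\epsilon^2/2)+o(\epsilon^2)$) is the one the paper's proof actually uses, although Definition \ref{viscosity-operator} in the appendix is printed with the two inequalities interchanged.
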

	\begin{proof}
		Let $\ph\in C^2(B_R(x)\times [t-\tau,t+\tau])$ be a test function touching $u$ from below at $(x,t)$. Then, we have that $\partial_t\ph(x,t)\geq \left(\plap \ph(x,t)\right)^{\frac{1}{p-1}}$ and by Corollary \ref{characterizacion-1} we obtain 
		$$\ph(x,t+\epsilon^2/2)\geq \A_\epsilon[\ph](x,t)+o(\epsilon^2),$$
		which means that $u$ is a supersolution of the AMVF. 
		If the test function $\ph$ touches $u$ from above, the conclusion follows analogously.
		
		The converse implication is proved in the same way. Let $\ph$ be a $C^2$ test function touching $u$ from below at $(x,t)$. Then $ \A_\epsilon[\ph](x,t)\leq \ph(x,t+\epsilon^2/2)+o(\epsilon^2)$ and Corollary \ref{characterizacion-1} yields $$\partial_t\ph(x,t)\geq \left(\plap\ph(x,t)\right)^{\frac{1}{p-1}}.$$
		The case where $\ph$ touches $u$ from above is analogous.
	\end{proof}


	\section{Dynamic Programming Principle in bounded domains}\label{sec4}

	Given $\epsilon>0$, the mean value operator $\A_\epsilon$ defined in \eqref{operator-A} induces the following Dynamic Programming Principle (DPP) associated with \eqref{problem-intro-1},
	\begin{equation}\label{DPP}
		\left\{\begin{array}{cll} \ue(x,t+\epsilon^2/2)=\A_\epsilon[\ue](x,t), &\mbox{in }\Omega\times (-\epsilon^2/2,\infty),
		\\[4pt] \ue(x,0)=u_0(x), &\mbox{in } \Omega\times (-\epsilon^2/2,0],
		\\[4pt] \ue(x,t)=g(x), &\mbox{in }\Rd\setminus \Omega\times (-\epsilon^2/2,\infty).\end{array}\right.
	\end{equation}
	where $\Omega\subset \Rd$ is a bounded smooth domain, and the initial and exterior data belong to the space of continuous and bounded functions, namely $g\in C_b(\Rd\setminus\Omega)$ and $u_0\in C_b(\overline{\Omega})$. Note that the initial datum in time is extended to negative times. An equivalent way to state 
	this problem is the following:
	\begin{equation*}
		\left\{\begin{array}{cll} \ue(x,t)=\A_\epsilon[\ue](x,t-\epsilon^2/2), &\mbox{in }\Omega\times (0,\infty),
		\\[4pt] \ue(x,t)=u_0(x), &\mbox{in } \Omega\times (-\epsilon^2/2,0],
		\\[4pt] \ue(x,t)=g(x), &\mbox{in }\Rd\setminus \Omega\times (-\epsilon^2/2,\infty).\end{array}\right.
	\end{equation*}
	Depending on the computations to be carried out, we will choose the formulation of the DPP that is most convenient.
	
	The existence and uniqueness of solutions to this parabolic DPP are established via an explicit Euler time-discretization scheme, which, roughly speaking, proceeds as follows. For any initial time $t_0\in (-\epsilon^2/2,0]$, we define for all $j\in \NN$ $$t_j=t_0+j\frac{\epsilon^2}{2} \hspace{4mm}\mbox{and}\hspace{4mm} u_{\epsilon}^j=\ue(\cdot,t_j).$$
	The solution at the next time step is then obtained from the previous one by 
	$$u_{\epsilon}^{j+1}=\A_\epsilon[u_{\epsilon}^j] \quad \textup{in} \quad \Omega.$$
	
	Recall that this method allow us to define the solution at any time $\overline{t}>0$ such that there exist $t_0\in(-\epsilon^2/2,0]$ and $j\in\NN$ such that $\overline{t}=t_0+j\frac{\epsilon^2}{2}$. From here to the end, unless otherwise indicated and for simplicity in notation, we choose the mesh that results from choosing $t_0=0$.
	
	\begin{rem} {\rm
			The only delicate point in proving the existence of solutions to \eqref{DPP} is to show that the operator $\A_\epsilon$ maps the space of bounded measurable functions into itself. This can be established by an argument similar to the one used in \cite{delTeso_Rossi_2024}. An alternative approach would be to use viscosity solutions of the DPP, recently introduced in \cite{delTeso_Rossi_ruizC_2024}. However, this is not necessary here, since solutions solve the problem pointwise.}
	\end{rem}

	
	\subsection{Properties of solutions to the DPP}\label{sec4.1}
	We next present several properties of the solutions to \eqref{DPP} arising from the explicit Euler time-discretization scheme
	that we used to show existence and uniqueness.
	
	\begin{thm}[Comparison Principle]\label{comparison}
		Let $d\geq 1$, $p>2$, $\Omega\subset \Rd$ bounded and $\epsilon > 0$. Let $g_1,g_2\in C_b(\Rd\setminus\Omega)$ and $u_{0,1},u_{0,2}\in C_b(\overline{\Omega})$. Take $\ua,\ub:\Rd\times \RR\longrightarrow \RR$ two bounded measurable functions such that
		$$\left\{\begin{array}{cll} \ua(x,t+\epsilon^2/2)\geq\A_\epsilon[\ua](x,t), &\mbox{in }\Omega\times (-\epsilon^2/2,\infty),\\[4pt] \ua(x,t)\geq u_{0,1}(x), &\mbox{in } \Omega\times (-\epsilon^2/2,0],\\[4pt] \ua(x,t)\geq g_1(x), &\mbox{in }\Rd\setminus \Omega\times (-\epsilon^2/2,\infty).\end{array}\right.$$
		$$\left\{\begin{array}{cll} \ub(x,t+\epsilon^2/2)\leq\A_\epsilon[\ub](x,t), &\mbox{in }\Omega\times (-\epsilon^2/2,\infty),\\[4pt] \ub(x,t)\leq u_{0,2}(x), &\mbox{in } \Omega\times (-\epsilon^2/2,0],\\[4pt] \ub(x,t)\leq g_2(x), &\mbox{in }\Rd\setminus \Omega\times (-\epsilon^2/2,\infty).\end{array}\right.$$
		
		If $u_{0,1}\geq u_{0,2}$ in $\Omega$ and $g_1\geq g_2$ in $\Rd\setminus\Omega$, then:
		\begin{enumerate}[label=\alph*)]
			\item If $\ub(x,t_j)\leq \ua(x,t_j)$ for all $x\in\Rd$, then, functions are still ordered in the next step, that is $$\ub(x,t_{j+1})\leq \ua(x,t_{j+1}).$$
			\item 	$\ub(x,t)\leq \ua(x,t)$ for all $(x,t)\in \Rd\times (-\epsilon^2/2,\infty)$.
		\end{enumerate}
	\end{thm}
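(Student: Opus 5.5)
The plan rests on one structural fact: the operator $\A_\epsilon$ is monotone. If $\phi\le\psi$ pointwise on the relevant balls, then $\A_\epsilon[\phi](x)\le\A_\epsilon[\psi](x)$. I would check this directly from \eqref{operator-A}, working from the inside out.

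\begin{itemize}
\item The quantities $\sup_{B_r(x)}\phi$, $\inf_{B_r(x)}\phi$ and $\fint_{B_r(x)}\phi$ are each nondecreasing in $\phi$.
\item Hence $M_\rho[\phi](x)$ is nondecreasing, since $\beta,1-\beta\ge0$.
\item The convex combinations with weights $\alpha,1-\alpha\ge0$ are nondecreasing.
\item Taking $\inf_{c}$ or $\sup_{c}$ over $[m,M]$ preserves the order.
\item The outer $\max[\cdot,\phi(x)]$ and $\min[\cdot,\phi(x)]$ are nondecreasing in both arguments.
\end{itemize}

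Summing the two halves gives monotonicity. Theorem \ref{well-def} guarantees that all these quantities are well defined for bounded measurable functions once $\epsilon$ is small; for general $\epsilon$ I would simply take the balls in $\Rd$, where the functions are defined.

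For part a), fix $j$ and assume $\ub(\cdot,t_j)\le\ua(\cdot,t_j)$ on all of $\Rd$. Take $x\in\Omega$. Note that $t_j\in(-\epsilon^2/2,\infty)$ for the mesh $t_j=t_0+j\epsilon^2/2$ with $t_0\in(-\epsilon^2/2,0]$. Then the sub- and supersolution inequalities together with monotonicity give
\[
\ub(x,t_{j+1})\le\A_\epsilon[\ub](x,t_j)\le\A_\epsilon[\ua](x,t_j)\le\ua(x,t_{j+1}).
\]
Here $\A_\epsilon[\cdot](x,t_j)$ acts only on the spatial slice at time $t_j$. For $x\in\Rd\setminus\Omega$, and since $t_{j+1}>0>-\epsilon^2/2$, the exterior conditions give
\[
\ub(x,t_{j+1})\le g_2(x)\le g_1(x)\le\ua(x,t_{j+1}).
\]
So the order holds on all of $\Rd$ at $t_{j+1}$.

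For part b), fix an arbitrary $(x,t)$ with $t>-\epsilon^2/2$. Write $t=t_0+j\epsilon^2/2$ with $t_0\in(-\epsilon^2/2,0]$ and $j\in\NN$, and induct on $j$ along this mesh. For the base case $j=0$: if $x\in\Omega$, then $\ub(x,t_0)\le u_{0,2}(x)\le u_{0,1}(x)\le\ua(x,t_0)$; if $x\notin\Omega$, the exterior data give the same ordering. So the ordering holds on all of $\Rd$ at $t_0$, and part a) supplies the inductive step. Since every $t>-\epsilon^2/2$ lies on exactly one such mesh, the conclusion holds everywhere.

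The only point needing care is the monotonicity of the nested max/inf structure in $\A_\epsilon$. It is routine, but one must note that the terms $\phi(x)$ inside the max and min are also ordered, which is why the hypothesis of part a) is required on all of $\Rd$ and not just on $\Omega$.
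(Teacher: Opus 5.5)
Your proposal is correct and follows the paper's proof. In both, monotonicity of $\A_\epsilon$ (which you check componentwise rather than through the max-difference inequality of Lemma \ref{claim-1}) gives the one-step ordering in $\Omega$, the exterior data give it outside $\Omega$, and induction along the mesh $t_0+j\epsilon^2/2$ yields b). One small correction to your closing remark: the terms $\phi(x)$ are evaluated at $x\in\Omega$, so the ordering outside $\Omega$ is needed because the balls $B_{\epsilon^2c^{1-\alpha}}(x)$ and $B_{\gamma\epsilon c^{-\alpha/2}}(x)$ leave $\Omega$ (and there $g_2\le g_1$ already supplies it).
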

	
	\begin{rem}\label{remark-comparison} {\rm
			Notice that, the previous result can be stated on time intervals $(-\epsilon^2/2,T)$ for any $0<T\leq \infty$ instead of $(-\epsilon^2/2,\infty)$ 
			in order to use the comparison principle in bounded time inetervals.
			
			Moreover, to apply this comparison principle, it is enough to compare the exterior data in a neighborhood of the domain. More precisely, it suffices to impose the condition $g_1\geq g_2$ in the set 
			\[
			\partial\Omega_\epsilon := \Omega_\epsilon \setminus \Omega\qquad \mbox{with } \Omega_\epsilon:=(\Omega + B_{\epsilon^\gamma}(0)),
			\]
			where $\epsilon^\gamma=\max\left\{\epsilon^2 M(\epsilon)^{1-\alpha}, \, \epsilon \, m(\epsilon)^{-\alpha/2}\right\}$.}
	\end{rem}
	
	Before starting the proof, we need the following two lemmas. 
	\begin{lem}\label{claim-1}
		The following inequality always holds for all $A_1,A_2,B_1,B_2\in \RR$:
		$$\max\left[A_1,A_2\right]-\max\left[B_1,B_2\right]\leq \max\left[A_1-B_1, A_2-B_2\right].$$
	\end{lem}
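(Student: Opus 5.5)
The plan is to reduce the inequality to the elementary fact that the maximum of two numbers is attained by one of them, and then split into two cases according to which entry realizes $\max[A_1,A_2]$.

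\emph{Case $\max[A_1,A_2]=A_1$.} Since $\max[B_1,B_2]\ge B_1$, we get
\[
\max[A_1,A_2]-\max[B_1,B_2]\le A_1-B_1\le \max\left[A_1-B_1,A_2-B_2\right].
\]

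\emph{Case $\max[A_1,A_2]=A_2$.} The argument is symmetric: use $\max[B_1,B_2]\ge B_2$ to bound the left-hand side by $A_2-B_2$, which again is at most $\max[A_1-B_1,A_2-B_2]$.

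No step is a real obstacle; the only point of care is to subtract the \emph{matching} $B_i$ in each case, so that we use the lower bound $\max[B_1,B_2]\ge B_i$ with the correct index. Equivalently, one may note that $\max[A_1,A_2]\le \max[B_1,B_2]+\max[A_1-B_1,A_2-B_2]$, because each $A_i=B_i+(A_i-B_i)$ is bounded by the right-hand side. Taking the maximum over $i$ and rearranging then gives the claim.

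For later use in the comparison principle, the same argument applied to $-A_i,-B_i$ yields the companion inequality
\[
\min[A_1,A_2]-\min[B_1,B_2]\le \max\left[A_1-B_1,A_2-B_2\right].
\]
The analogous statements for infima and suprema over $c\in[m,M]$ follow by the same pointwise reasoning.
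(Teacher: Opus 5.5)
Your proof is correct and is the same as the paper's: the paper also fixes the index $i$ with $A_i=\max[A_1,A_2]$ and uses $\max[B_1,B_2]\ge B_i$ to get $A_i-\max[B_1,B_2]\le A_i-B_i\le \max[A_1-B_1,A_2-B_2]$. Your extra min, inf and sup variants are also valid; for the min version, note that you need to apply the lemma to $-B_i,-A_i$, i.e.\ with the roles of $A$ and $B$ swapped.
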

	
	\begin{proof}
		Denote $A_i=\max\left[A_1,A_2\right]$ and $B_j=\max\left[B_1,B_2\right]$. Then, for any $j\in\left\{1,2\right\}$
		$$A_i-B_j\leq A_i-B_i\leq \max\left[A_1-B_1,A_2-B_2\right].$$
	\end{proof}

	\begin{lem}\label{claim-2}
		For all $\epsilon>0$, the operator $\A_\epsilon$ is a mean value operator and preserves order, that is, if we have two ordered (bounded and measurable) functions  $u\leq v$ in $\RR^d$, then $\A_\epsilon[u]\leq \A_\epsilon[v]$ in $\RR^d$.
	\end{lem}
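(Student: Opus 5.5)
The plan is to unpack $\A_\epsilon$ into its elementary building blocks and check that each one is monotone and that the ways they are combined preserve monotonicity.

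\emph{Mean value operator.} By this I understand two facts. First, $\A_\epsilon$ commutes with constants: $\A_\epsilon[u+k]=\A_\epsilon[u]+k$ for $k\in\RR$. Second, it reproduces constants: $\A_\epsilon[k]=k$. Both follow directly from the definition. Each of $\sup_{B}$, $\inf_{B}$ and $\fint_B$ satisfies $L[u+k]=L[u]+k$, and hence so does $M_\rho$, which is a convex combination ($\beta/2+\beta/2+(1-\beta)=1$) of such terms. The expressions inside the $\inf_c$ and $\sup_c$ carry weights $\alpha+(1-\alpha)=1$, so they also shift by $k$. This shift commutes with $\inf_{c\in[m,M]}$ and $\sup_{c\in[m,M]}$. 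Since $\ph(x)$ also shifts by $k$, both outer $\max$ and $\min$ shift by $k$, and the final combination has weights $\tfrac12+\tfrac12$. Taking $u\equiv0$ (where every piece equals $0$) and using the translation property gives $\A_\epsilon[k]=k$. Well-definedness for bounded measurable functions and small $\epsilon$ is guaranteed by Theorem \ref{well-def}.

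\emph{Monotonicity.} Fix $x$ and suppose $u\le v$ pointwise. Then for every $c\in[m(\epsilon),M(\epsilon)]$:
\begin{itemize}
\item $\sup_{B_{\epsilon^2c^{1-\alpha}}(x)}u\le \sup_{B_{\epsilon^2c^{1-\alpha}}(x)}v$, and likewise for $\inf$;
\item $\fint_{B}u\le\fint_B v$, since the average is an integral against a positive measure;
\item $M_{\epsilon c^{-\alpha/2}}[u](x)\le M_{\epsilon c^{-\alpha/2}}[v](x)$, because $\beta\ge0$ and $1-\beta\ge0$ (as $p>2$).
\end{itemize}
Since $\alpha,1-\alpha\in(0,1)$, each bracketed quantity for $u$ is at most the corresponding one for $v$, for every $c$. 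Taking $\inf_c$ (respectively $\sup_c$) of both sides preserves this inequality. Finally, $(a,b)\mapsto\max[a,b]$ and $(a,b)\mapsto\min[a,b]$ are nondecreasing in each argument, and $u(x)\le v(x)$. Adding the two halves with positive weights $\tfrac12$ gives $\A_\epsilon[u](x)\le\A_\epsilon[v](x)$. One could phrase the $\max$ step through Lemma \ref{claim-1}, since $\max[A_1,A_2]-\max[B_1,B_2]\le\max[A_1-B_1,A_2-B_2]\le0$ when $A_i\le B_i$; the analogous statement for $\min$ follows by applying this to the negatives.

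\emph{Main obstacle.} There is no genuine obstacle; the argument is a chain of monotone operations. The only care needed is the order of quantifiers: the inequality must hold for each fixed $c$ before passing to $\inf_c$ or $\sup_c$. One must also remember that the balls stay inside the region where $u,v$ are defined, which holds for $\epsilon<\beps$ as in Theorem \ref{well-def}. In the parabolic setting the same argument applies with $t$ frozen.
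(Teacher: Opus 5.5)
Your proof is correct and follows essentially the same route as the paper: $\sup$, $\inf$, averages and $M_\rho$ are monotone, and this monotonicity survives $\inf_c$, $\sup_c$ and the outer $\max$/$\min$. The paper packages the last step differently: it rewrites the $\min$ term as $-\max$ of the negatives and applies Lemma \ref{claim-1} to get an explicit upper bound for $2(\A_\epsilon[u]-\A_\epsilon[v])$, a form it reuses in Lemma \ref{contraction}; you chain the monotone operations directly, and you also verify commutation with constants, which the paper uses later without proving it here.
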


	\begin{proof}
		Recall that, using $\sup\left\{a\right\}=-\inf\left\{-a\right\}$, we can rewrite $A_\epsilon$ as follows
		\begin{align*}
			\A_\epsilon[u](x)=& \frac{1}{2}\max\Big[\inf_{c\in[m,M]}\Big\{\alpha \sup_{B_{\epsilon^2 c^{1-\alpha}}(x)} u + (1-\alpha)M_{\epsilon c^{-\alpha/2}}[u](x,t)\Big\},u(x)\Big]\\
			&- \frac{1}{2}\max\Big[\inf_{c\in[m,M]}\Big\{\alpha\sup_{B_{\epsilon^2 c^{1-\alpha}}(x)}(-u)+(1-\alpha)M_{\epsilon c^{-\alpha/2}}[-u](x)\Big\},-u(x)\Big].
		\end{align*}
		Therefore, using Lemma \ref{claim-1} several times we can conclude that
		\begin{align*}
			2 & \bigl(\A_\epsilon[u](x)-\A_\epsilon[v](x)\bigr) \\
			& \le
			\max\Big[
			\sup_{c\in[m,M]} \Big\{
			\alpha\Big(
			\sup_{B_{\epsilon^2 c^{1-\alpha}}(x)} u
			- \sup_{B_{\epsilon^2 c^{1-\alpha}}(x)} v
			\Big)
			+ (1-\alpha)\Big(
			M_{\epsilon c^{-\alpha/2}}[u](x)
			- M_{\epsilon c^{-\alpha/2}}[v](x)
			\Big)
			\Big\},\,u(x)-v(x)
			\Big]\\
			&\quad
			+ \max\Big[
			\sup_{c\in[m,M]} \Big\{
			\alpha\Big( 
			\inf_{B_{\epsilon^2 c^{1-\alpha}}(x)} \!\! u
			- \!\! \inf_{B_{\epsilon^2 c^{1-\alpha}}(x)} \!\! v
			\Big)
			+ (1-\alpha)\Big(
			M_{\epsilon c^{-\alpha/2}}[u](x)
			- M_{\epsilon c^{-\alpha/2}}[v](x)
			\Big)
			\Big\},\,u(x)-v(x)
			\Big].
		\end{align*}
		Hence, $\A_\epsilon[u]-\A_\epsilon[v]\leq 0$ since $u\leq v$.
	\end{proof}

	\begin{proof}[Proof Theorem \ref{comparison}]
		\textit{ a).} On the one hand, if $x\in \Rd\setminus\Omega$ there is nothing to prove since 
		$$\ub(x,t)\leq g_2(x)\leq g_1(x)\leq \ua(x,t).$$ 
		
		On the other hand, let $x\in\Omega$ and assume that $\ub(x,t_j)\leq \ua(x,t_j)$ for some $j\in\NN$. Then, by Lemma \ref{claim-2} the operator $\A_\epsilon$ is monotone and therefore, $$\ub(x,t_{j+1})-\ua(x,t_{j+1})\leq \A_\epsilon[\ub](x,t_j)-\A_\epsilon[\ua](x,t_j)\leq 0.$$
		
		\textit{ b).}
		If $(x,t)\in \Rd\setminus\Omega\times \RR^+$ or $(x,t)\in \Omega\times \RR^-\cup \left\{0\right\}$ then 
		$$\ub(x,t)\leq g_2(x)\leq g_1(x)\leq \ua(x,t)\hspace{3mm}\mbox{and}\hspace{3mm} \ub(x,t)\leq u_{0,2}(x)\leq u_{0,1}(x)\leq \ua(x,t),$$
		respectively. Finally, if $(x,t)\in \Omega\times \RR^+$, then there exists $t_0\in(-\epsilon^2/2,0]$ and $j\in\NN$ such that $t=t_0+j\frac{\epsilon^2}{2}$. Applying part $a)$ iteratively $j$ times, we obtain
		$$\ub(x,t_0)\leq \ua(x,t_0)\Rightarrow \ub(x,t_0+\frac{\epsilon^2}{2})\leq \ua(x,t_0+\frac{\epsilon^2}{2})\Rightarrow...\Rightarrow \ub(x,t_0+j\frac{\epsilon^2}{2})\leq \ua(x,t_0+j\frac{\epsilon^2}{2}),$$
		and the proof is completed. 
	\end{proof}
	
	\begin{lem}[Contraction in $\LL^\infty$]\label{contraction}
		Let $d\geq 1$, $p>2$, $\epsilon >0$, and $\Omega\subset\Rd$ bounded. Let $g_1,g_2\in C_b(\Rd\setminus\Omega)$ and $u_0,v_0\in C_b(\overline{\Omega})$. Take $u,v:\Rd\times\RR\longrightarrow \RR$ two bounded measurable functions satisfying
		\begin{equation*}
			\left\{\begin{array}{cll} u(x,t+\epsilon^2/2)=\A_\epsilon[u](x,t), &\mbox{in }\Omega\times (-\epsilon^2/2,\infty),\\[4pt] u(x,t)=u_0(x), &\mbox{in } \Omega\times (-\epsilon^2/2,0],\\[4pt] u(x,t)=g_1(x), &\mbox{in }\Rd\setminus \Omega\times (-\epsilon^2/2,\infty).\end{array}\right.
		\end{equation*}
		\begin{equation*}
			\left\{\begin{array}{cll} v(x,t+\epsilon^2/2)=\A_\epsilon[v](x,t), &\mbox{in }\Omega\times (-\epsilon^2/2,\infty),\\[4pt] v(x,t)=v_0(x), &\mbox{in } \Omega\times (-\epsilon^2/2,0],\\[4pt] v(x,t)=g_2(x), &\mbox{in }\Rd\setminus \Omega\times (-\epsilon^2/2,\infty).\end{array}\right.
		\end{equation*}
		For fixed $t_0\in (-\epsilon^2/2,0]$ define $u^j(x)=u(x,t_0+j\frac{\epsilon^2}{2})$ and $v^j(x)=v(x,t_0+j\frac{\epsilon^2}{2})$. Then, 
		$$\|u^{j+1}-v^{j+1}\|_{\LL^\infty(\RR^d)}\leq \| u^j-v^j\|_{\LL^\infty(\RR^d)}.$$
		Moreover, for every $\overline{t}>0$, 
		$$\|u(\cdot,\overline{t})-v(\cdot,\overline{t})\|_{\LL^\infty(\RR^d)}\leq \max\Big\{\|u_0-v_0\|_{\LL^\infty(\Omega)},\|g_1-g_2\|_{\LL^\infty(\RR^d\setminus\Omega)}
		\Big\} .$$
	\end{lem}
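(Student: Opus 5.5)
The approach is to prove that $\A_\epsilon$ is non-expansive in the sup norm and then iterate along the time mesh, as in the proof of Theorem \ref{comparison}.

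\emph{Step 1: one-step contraction.} Fix $j$ and set $K=\|u^j-v^j\|_{\LL^\infty(\RR^d)}$. On $\Rd\setminus\Omega$ we have $u^{j+1}-v^{j+1}=g_1-g_2$. For $j\ge 0$ the time $t_0+j\epsilon^2/2\ge t_0>-\epsilon^2/2$, so at step $j$ the exterior values are also $g_1,g_2$; hence $|g_1-g_2|\le K$ there, and the exterior bound at step $j+1$ holds. In $\Omega$, $u^{j+1}=\A_\epsilon[u^j]$ and $v^{j+1}=\A_\epsilon[v^j]$. Since $u^j\le v^j+K$ pointwise, Lemma \ref{claim-2} gives $\A_\epsilon[u^j]\le\A_\epsilon[v^j+K]$.

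The key observation is that $\A_\epsilon$ commutes with constants: $\A_\epsilon[w+K]=\A_\epsilon[w]+K$. This holds because every ingredient in \eqref{operator-A} does. The sup and inf over balls commute with constants. $M_\rho$ does too, since its weights satisfy $\beta/2+\beta/2+(1-\beta)=1$. The convex combinations with weights $\alpha,1-\alpha$ preserve this, as do the $\inf_c$ and $\sup_c$. Finally, the max and min with $w(x)$ and the outer average with weights $1/2,1/2$ shift by $K$ as well. Hence $u^{j+1}-v^{j+1}\le K$ in $\Omega$. Swapping the roles of $u$ and $v$ gives the reverse inequality, which proves the first claim.

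The only care needed is measurability and well-posedness of $\A_\epsilon$ on bounded measurable functions, which is taken from Theorem \ref{well-def} and the preceding remark. If one wants to avoid the sup versus ess sup distinction, one can work with pointwise suprema, since all inequalities above hold pointwise.

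\emph{Step 2: iteration.} For $\overline t>0$, write $\overline t=t_0+j\epsilon^2/2$ with $t_0\in(-\epsilon^2/2,0]$. Applying Step 1 $j$ times gives $\|u(\cdot,\overline t)-v(\cdot,\overline t)\|\le\|u^0-v^0\|$. Since $u^0=u_0$, $v^0=v_0$ in $\Omega$ and $u^0=g_1$, $v^0=g_2$ outside $\Omega$, we get $\|u^0-v^0\|\le\max\{\|u_0-v_0\|_{\LL^\infty(\Omega)},\|g_1-g_2\|_{\LL^\infty(\Rd\setminus\Omega)}\}$.

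No step is a real obstacle. The only point requiring verification is the translation invariance of $\A_\epsilon$ together with the bookkeeping of the exterior values at each step.
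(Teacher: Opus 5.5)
Your proof is correct, but it takes a different route from the paper's.

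The paper proves the one-step estimate by unpacking $\A_\epsilon[u^j](x)-\A_\epsilon[v^j](x)$ directly. It uses Lemma \ref{claim-1} to move the difference inside the outer max/min and the $\inf_c$/$\sup_c$. It then bounds each ingredient separately, $\alpha\,|\sup_B u^j-\sup_B v^j|\le\alpha\|u^j-v^j\|$ and $(1-\alpha)\,|M_{\epsilon c^{-\alpha/2}}[u^j]-M_{\epsilon c^{-\alpha/2}}[v^j]|\le(1-\alpha)\|u^j-v^j\|$, and the weights summing to one give the contraction.

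You instead use the principle that a monotone operator commuting with constants is non-expansive in the sup norm. From $u^j\le v^j+K$, Lemma \ref{claim-2} together with $\A_\epsilon[w+K]=\A_\epsilon[w]+K$ gives $u^{j+1}-v^{j+1}\le K$; the paper records this translation property itself only later, in Remark \ref{properties-A}. Swapping $u$ and $v$ gives the other inequality.

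Your argument is shorter and would apply verbatim to any monotone, translation-invariant scheme. It also avoids a point the paper's display glosses over. The paper bounds $|\A_\epsilon[u^j]-\A_\epsilon[v^j]|$ by absolute values of maxima of differences. Lemma \ref{claim-1} as stated is only one-sided, so that step tacitly needs either a two-sided form, $|\max[A_1,A_2]-\max[B_1,B_2]|\le\max[|A_1-B_1|,|A_2-B_2|]$, or an unstated symmetrization. Your one-sided bound plus swap handles this cleanly.

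The paper's computation is more self-contained: it does not invoke monotonicity separately, and it shows exactly where the weights $\alpha,1-\alpha$ and $\beta/2,\beta/2,1-\beta$ enter. You also make explicit two things the paper leaves implicit. First, $|g_1-g_2|\le K$ on $\Rd\setminus\Omega$ because the step-$j$ exterior values are again $g_1,g_2$. Second, the norm must be read as a pointwise supremum, since the operator involves pointwise sup/inf over balls.
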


	\begin{proof}
		Let $x\in\Omega$, then using $u^{j+1}(x)=\A_\epsilon[u^j](x)$ and the definition of the operator \eqref{operator-A}, we get
		\begin{align*}
			\big|&u^{j+1}(x)-v^{j+1}(x)\big|
			=
			\left|\A_\epsilon[u](x,t_j)-\A_\epsilon[v](x,t_j)\right|\\
			&\le
			\frac{1}{2}
			\Big|
			\max\Big[
			\sup_{c\in[m,M]} \Big\{
			\alpha\Big(
			\sup_{B_{\epsilon^2 c^{1-\alpha}}(x)} u^j
			- \sup_{B_{\epsilon^2 c^{1-\alpha}}(x)} v^j
			\Big)
			+ (1-\alpha)\Big(
			M_{\epsilon c^{-\alpha/2}}[u^j]
			- M_{\epsilon c^{-\alpha/2}}[v^j]
			\Big)
			\Big\},\,u^j-v^j
			\Big]
			\Big|\\
			&\quad
			+ \frac{1}{2}
			\Big|
			\max\Big[
			\sup_{c\in[m,M]} \Big\{
			\alpha\Big(
			\sup_{B_{\epsilon^2 c^{1-\alpha}}(x)} (-u^j)
			- \sup_{B_{\epsilon^2 c^{1-\alpha}}(x)} (-v^j)
			\Big)
			\\
			&\qquad \qquad + (1-\alpha)\Big(
			M_{\epsilon c^{-\alpha/2}}[-u^j]
			- M_{\epsilon c^{-\alpha/2}}[-v^j]
			\Big)
			\Big\},\,u^j-v^j
			\Big]
			\Big|.
		\end{align*}
		Now, we bound each term in the above expression separately. We have 
		\begin{equation*}
			\alpha \left|\sup_{B_{\epsilon^2 c^{1-\alpha}}(x)} u^j- \sup_{B_{\epsilon^2 c^{1-\alpha}}(x)} v^j\right|\leq \alpha \sup_{B_{\epsilon^2 c^{1-\alpha}}(x)}\left|u^j-v^j\right|\leq \alpha \|u^j-v^j\|_{\LL^\infty(\RR^d)},
		\end{equation*}
		and
		\begin{align*}
			(1-\alpha)\left|M_{\epsilon c^{-\alpha/2}}[u^j](x)-M_{\epsilon c^{-\alpha/2}}[v^j](x)\right|&\leq (1-\alpha)\beta \sup_{B_{\gamma \epsilon c^{-\alpha/2}}(x)}\left|u^j-v^j\right|\\
			&\qquad+(1-\alpha)(1-\beta)\fint_{B_{\gamma \epsilon c^{-\alpha/2}}}|u^j(y)-v^j(y)|\dy\\
			&\leq (1-\alpha)\|u^j-v^j\|_{\LL^\infty(\RR^d)}.
		\end{align*}
		Proceeding analogously with the second term, we obtain $$\left|u^{j+1}(x)-v^{j+1}(x)\right|\leq \|u^j-v^j\|_{\LL^\infty(\Rd)}.$$ Since the point $x$ is arbitrary, we may take the supremum over $x\in\Omega$ to ensure the $L^\infty$ bound in $\Omega$. If $x\in \Rd\setminus\Omega$, the bound follows from the definition of the solutions outside $\Omega$.
		
		For the second part, let $\overline{t}=t_0+j\frac{\epsilon^2}{2}$ for some $t_0\in(-\epsilon^2/2,0]$ and $j\in\NN$. Applying the the first part iteratively $j_n$ times, we obtain
		$$\|u^{j}-v^{j}\|_{\LL^\infty(\RR^d)}\leq \|u^{j-1}-v^{j-1}\|_{\LL^\infty(\RR^d)}\leq...\leq \max\Big\{\|u_0-v_0\|_{\LL^\infty(\Omega)},\|g_1-g_2\|_{\LL^\infty(\RR^d\setminus\Omega)}\Big\} .$$
	\end{proof}
	
	
	\subsection{Convergence as $\epsilon\to 0^+$.}\label{sec4.2}
	Throughout this part of the section, we show that the solutions to the DPP \eqref{DPP} converge, as $\epsilon\to 0^+$, to the viscosity solution of the parabolic problem \eqref{problem-intro-1}.

	Since existence and uniqueness for the DPP have been already established, our next task is to show that the solutions of the DPP are uniformly bounded in $\Rd\times \RR^+$ independently of $\epsilon$.
	\begin{lem}\label{bounded}
		Let $d\geq 1$, $p>2$, $\epsilon >0 $, $\Omega\subset\Rd$ bounded. Assume $u_0\in C_b(\Omega)$, $g\in C_b(\Rd\setminus\Omega)$. 
		Then, if $\ue$ is the solution of the DPP \eqref{DPP}, there exists $\epsilon_0>0$ such that for all $\epsilon\in(0,\epsilon_0)$
		\begin{equation}\label{bounded-ec1}
			\|\ue \|_{\LL^{\infty}(\Rd\times \RR^+)}\leq C.
		\end{equation}
		Where $C>0$ is a constant depending only on $d$, $p$, $\Omega$, $g$ and $u_0$ but not on $\epsilon$. 
	\end{lem}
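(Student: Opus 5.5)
The plan is to compare $\ue$ with constant functions, using the comparison principle (Theorem \ref{comparison}). An alternative is to iterate the monotonicity of $\A_\epsilon$ directly (Lemma \ref{claim-2}). The key observation is that $\A_\epsilon$ fixes constants. Indeed, if $\ph\equiv K$, then every supremum, infimum and average in \eqref{operator-A} equals $K$. In particular $M_{\rho}[K]=K$, since $\beta/2+\beta/2+(1-\beta)=1$. Both the infimum and the supremum over $c$ are therefore $\alpha K+(1-\alpha)K=K$, and
\[
\A_\epsilon[K](x)=\tfrac12\max[K,K]+\tfrac12\min[K,K]=K .
\]
This requires $\A_\epsilon$ to be well defined. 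By Theorem \ref{well-def} and \eqref{HT}, this holds for $\epsilon<\epsilon_0$, where $\epsilon_0$ is chosen so that all balls involved have radius at most $\max\{\epsilon^2M(\epsilon)^{1-\alpha},\gamma\epsilon m(\epsilon)^{-\alpha/2}\}\le 1$. This is where the restriction $\epsilon\in(0,\epsilon_0)$ in the statement comes from.

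Set $C:=\max\{\|u_0\|_{\LL^\infty(\Omega)},\|g\|_{\LL^\infty(\Rd\setminus\Omega)}\}$. This is finite because $u_0$ and $g$ are bounded continuous. Define $\ua\equiv C$ and $\ub\equiv -C$. By the previous paragraph, $\ua(x,t+\epsilon^2/2)=C=\A_\epsilon[\ua](x,t)$, so $\ua$ is a supersolution of \eqref{DPP}. Moreover $\ua\ge u_0$ in $\Omega\times(-\epsilon^2/2,0]$ and $\ua\ge g$ in $(\Rd\setminus\Omega)\times(-\epsilon^2/2,\infty)$. Symmetrically, $\ub$ is a subsolution lying below both data.

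Now apply Theorem \ref{comparison}(b) twice. The first application uses the pair $(\ua,\ue)$ with data $(C,C)$ dominating $(u_0,g)$. The second uses the pair $(\ue,\ub)$ with data $(u_0,g)$ dominating $(-C,-C)$. Since $\ue$ solves \eqref{DPP} with equality, it is both a sub- and a supersolution. This gives $-C\le\ue\le C$ in $\Rd\times(-\epsilon^2/2,\infty)$, which yields \eqref{bounded-ec1} with $C$ depending only on the data. It is in fact independent of $d$, $p$ and $\Omega$, which is consistent with the statement.

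One technical point needs care. The comparison theorem is stated with data in $C_b$ and requires $u_{0,1}\ge u_{0,2}$ and $g_1\ge g_2$. Constant data are trivially in $C_b$, so this hypothesis is met. The only possible obstacle is checking that $\ue$ is bounded measurable, which the comparison theorem assumes. This is given by the existence result (Theorem \ref{thm-DPPI}(a)). Alternatively, it follows from an induction on time steps $u^{j+1}_\epsilon=\A_\epsilon[u^j_\epsilon]$ combined with Lemma \ref{claim-2}, which yields $-C\le u^j_\epsilon\le C$ directly at each step without invoking measurability beyond what is needed to define $\A_\epsilon$.
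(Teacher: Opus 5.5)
Your proposal is correct and follows the paper's argument: $\A_\epsilon$ fixes constants because its weights sum to one, so constants are sub- and supersolutions of the DPP, and Theorem~\ref{comparison} gives the bound. The only difference is cosmetic: you use the sharper constant $\max\{\|u_0\|_{\LL^\infty(\Omega)},\|g\|_{\LL^\infty(\Rd\setminus\Omega)}\}$, whereas the paper uses $2\bigl(\|g\|_{\LL^\infty(\Rd\setminus\Omega)}+\|u_0\|_{\LL^\infty(\Omega)}\bigr)$.
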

	\begin{proof}
		The key observation is that constants act as smooth barriers for solutions to the DPP.
		
		\textit{Lower bound.}  
		Define the constant function
		\[
		\underline{v}(x,t)=-2\bigl(\|g\|_{\LL^\infty(\Rd\setminus\Omega)}+\|u_0\|_{\LL^\infty(\Omega)}\bigr).
		\]
		Clearly, \( \underline{v}(x,0)\leq u_0(x) \) for \( x\in\Omega \), and \( \underline{v}(x,t)\leq g(x) \) for \( x\in\Rd\setminus\Omega \). Moreover, since the coefficients of the operator \( \A_\epsilon \) sum up to one, we have $\A_\epsilon[\underline{v}](x,t)=\underline{v}(x,t)=\underline{v}(x,t+\epsilon^2/2)$.
		Thus, \( \underline{v} \) is a subsolution of \eqref{DPP}, and by the comparison principle,
		\[
		\underline{v}(x,t)\leq \ue(x,t)\qquad\text{for all }(x,t)\in \Rd\times\RR^+.
		\]
		
		\textit{Upper bound.}  
		Consider $\overline{v}(x,t)=2\bigl(\|g\|_{\LL^\infty(\Rd\setminus\Omega)}+\|u_0\|_{\LL^\infty(\Omega)}\bigr)$, and repeat the same argument as above.
	\end{proof}
	
	The final ingredients in the convergence analysis of the solutions to the DPP are the following estimates for the $\liminf$ and $\limsup$ of $\ue$ with respect to the initial and boundary data. We first address the case of the spatial boundary data.
	
	\begin{rem}\label{plap-radial-fun}\rm
		Before proceeding, let us recall the meaning of the uniform exterior ball condition for a bounded domain $\Omega$: there exists a radius $R>0$ such that for all $x_0\in\partial\Omega$ there exists a point $z_0\in\Rd\setminus\Omega$ such that $\overline{B_R(z_0)}\bigcap \overline{\Omega}=\left\{x_0\right\}$.
		
		Futhermore, we will need the following computation:
		Let $f_1(x)=|x|^{-\alpha}$ and $f_2(x)=|x|^\beta$ be two radial functions in $\Rd$ with $\alpha,\beta>0$. If $p>2$, then the $p-$Laplacian of these functions 
		is given by the following explicit expressions
		\begin{align*}
			\plap |x|^{-\alpha}&=\alpha^{p-1} (\alpha(p-1)+p-d)|x|^{-\alpha(p-1)-p},\\
			\plap |x|^\beta&=\beta^{p-1}(\beta(p-1)+d-p)|x|^{\beta(p-1)-p}.
		\end{align*}
		In particular, if $\alpha=\frac{p+d-2}{p-1}$, then all the coefficients in the first expression are positive. Moreover, if we choose $\beta=\frac{3p-2}{p-1}>\frac{p}{p-1}$, the $p-$Laplacian of $f_2$ can be extended continuously so that it takes the value $0$ at the origin.
	\end{rem}

	\begin{lem}\label{bound-g}
		Let $d\geq 1$, $p>2$, $\epsilon\in(0,1)$, and let $\Omega\subset\Rd$ be a bounded domain which satisfies the uniform exterior ball condition. Assume $u_0\in C_b(\Omega)$, $g\in C_b(\Rd\setminus\Omega)$ and \eqref{HT}. If $\ue$ is the solution of the DPP, then for every $T>0$ and $x_0\in \partial\Omega$
		\begin{align*}
			\liminf_{\epsilon\to 0^+, y\to x_0, s\to T}\ue(y,s)\geq g(x_0),\\
			\limsup_{\epsilon\to 0^+, y\to x_0, s\to T}\ue(y,s)\leq g(x_0).
		\end{align*}
	\end{lem}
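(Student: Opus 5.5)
We prove the $\liminf$ inequality by constructing an explicit barrier and applying the comparison principle, Theorem \ref{comparison}. The $\limsup$ inequality then follows by the symmetric argument with $-\ue$, using the odd symmetry $\A_\epsilon[-\ph]=-\A_\epsilon[\ph]$ noted in Step 3 of Theorem \ref{exp-c2}.

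\textbf{Setup.} Fix $x_0\in\partial\Omega$, $T>0$ and $\eta>0$. By the uniform exterior ball condition pick $z_0\in\Rd\setminus\Omega$ with $\overline{B_R(z_0)}\cap\overline\Omega=\{x_0\}$. By continuity of $g$ choose $\delta>0$ with $g\ge g(x_0)-\eta$ on $B_\delta(x_0)\setminus\Omega$.

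\textbf{Barrier.} Following Remark \ref{plap-radial-fun}, take the radial profile $w(x)=a-b|x-z_0|^{-\alpha_0}$ with $\alpha_0=\frac{p+d-2}{p-1}$. Then $-|x|^{-\alpha_0}$ has $\plap<0$ away from the centre, so $w$ is a smooth, strictly $p$-superharmonic radial function on $\{|x-z_0|>R/2\}$ with nonvanishing gradient. We want a subsolution, so we use instead the function
\[
\underline v(x,t)=g(x_0)-\eta-K\big(R^{-\alpha_0}-|x-z_0|^{-\alpha_0}\big)-\theta\,(T+1-t)^{-1}\cdot 0 ,
\]
or more simply $\underline v(x,t)=g(x_0)-\eta-K\big(R^{-\alpha_0}-|x-z_0|^{-\alpha_0}\big)-\kappa t$. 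Note that $-|x-z_0|^{-\alpha_0}$ with the minus sign in front of $K$ has $\plap\underline v=-K^{p-1}\plap(|x|^{-\alpha_0})\cdot(-1)$. The signs will be fixed so that $(\plap\underline v)^{1/(p-1)}\ge \partial_t\underline v+\mu$ strictly, for some $\mu>0$, on a neighbourhood $\Omega_\epsilon\times[0,T+1]$ (the time slope $-\kappa$ gives extra room if needed). Choose $K$ large so that $\underline v\le -C$ (with $C$ the uniform bound of Lemma \ref{bounded}, and also $\le \inf u_0$) outside $B_\delta(x_0)$. Then $\underline v\le g$ on $\Rd\setminus\Omega$ and $\underline v\le u_0$ at $t\le0$, while $\underline v(x_0,t)\to g(x_0)-\eta$ as $t\to 0$. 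To handle $s\to T$ with $\kappa t$ present, take $\kappa$ small relative to $\eta/T$ instead.

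\textbf{DPP subsolution.} Since $\underline v$ is $C^3$ with gradient bounded away from zero on the relevant set $\{R/2\le|x-z_0|\le \diam\Omega+R+1\}$, Theorem \ref{exp-c3}(b) applies. Its error bound is uniform there because $\|\gr\underline v\|$, $\|D^2\underline v\|$, $\|D^3\underline v\|$ and $1/|\gr\underline v|$ are all controlled. This gives
\[
\A_\epsilon[\underline v](x,t)\ge \underline v(x,t+\epsilon^2/2)+\tfrac{\epsilon^2}{2}\mu-C\epsilon^{2+\sigma}\ge \underline v(x,t+\epsilon^2/2)
\]
for all $x\in\Omega$ and all $\epsilon$ small, where $\sigma=\min\{2-4/p,\,2/(3p-4)\}>0$. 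So $\underline v$ is a subsolution in the sense of Theorem \ref{comparison}, with boundary data checked on $\partial\Omega_\epsilon$ as allowed by Remark \ref{remark-comparison}. Comparison yields $\ue\ge\underline v$. Letting $\epsilon\to0$, $y\to x_0$, $s\to T$ gives $\liminf\ue\ge g(x_0)-\eta-\kappa T$, and sending $\eta,\kappa\to0$ concludes.

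\textbf{Main obstacle.} The hard part is getting the sign and strictness of $(\plap\underline v)^{1/(p-1)}-\partial_t\underline v$ uniformly positive while keeping the error of Theorem \ref{exp-c3} uniform over all $x\in\Omega$. This is why the paper's choice of $\alpha$ in Remark \ref{plap-radial-fun} is used: it makes every coefficient have a definite sign, so $\plap$ of the radial part is bounded away from $0$ on the annulus. We may also need to add the $|x|^{\beta}$ profile to absorb the time derivative near $t\to T$, and to check that the balls used by $\A_\epsilon$ (of radius $\epsilon^\gamma$) never approach the singularity $z_0$. The latter holds since $\dist(z_0,\Omega)\ge R$.
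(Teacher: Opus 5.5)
\textbf{There is a genuine gap: the initial-data inequality fails near $x_0$.}

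Your barrier $\underline v(x,t)=g(x_0)-\eta-K\big(R^{-\alpha_0}-|x-z_0|^{-\alpha_0}\big)-\kappa t$ satisfies $\underline v(x,0)\to g(x_0)-\eta$ as $x\to x_0$ from inside $\Omega$. Taking $K$ large only pushes it down outside $B_\delta(x_0)$. Theorem~\ref{comparison}, however, needs $\underline v\le u_0$ on all of $\Omega\times(-\epsilon^2/2,0]$, and no compatibility between $u_0$ and $g$ is assumed. If $u_0(x_0)<g(x_0)-\eta$, the inequality fails at points of $\Omega$ near $x_0$.

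This cannot be fixed by tuning constants. Any subsolution lying below $\ue$ with values close to $g(x_0)-\eta$ near $(x_0,0)$ would force $u_0(y)=\ue(y,0)\ge g(x_0)-\eta-o(1)$ for $y\in\Omega$ near $x_0$, which is false in general. Your remark that $\underline v(x_0,t)\to g(x_0)-\eta$ as $t\to 0$ shows the construction targets the corner $(x_0,0)$, where the conclusion fails. The lemma is stated for $s\to T>0$ precisely to avoid this. The decreasing term $-\kappa t$ also points the wrong way: it lowers the barrier over time instead of letting it start low and rise.

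\textbf{The missing idea is a time-dependent term that is very negative at $t=0$ and vanishes at $t=T$.} Take
\[
-\big(\|u_0\|_{\LL^\infty(\Omega)}+\|g\|_{\LL^\infty(\Rd\setminus\Omega)}\big)(1-t/T)^2,
\]
which is the paper's term $(-t^2/T^2+2t/T-1)(\cdots)$. For $t\le 0$ it is at most $-(\|u_0\|+\|g\|)$, so $\underline v\le u_0-\eta$ in $\Omega$ whatever the data. At $t=T$ it vanishes, so the barrier still tends to $g(x_0)-\eta$ as $(y,s)\to(x_0,T)$.

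Its time derivative is positive, bounded for $t\le T$ by a constant multiple of $(\|u_0\|+\|g\|)/T$. This is absorbed by taking $K$ large, because for $x\in\Omega$
\[
\big(\plap K|x-z_0|^{-\alpha_0}\big)^{\frac{1}{p-1}}=K\alpha_0(2p-2)^{\frac{1}{p-1}}|x-z_0|^{-\alpha_0-\frac{p}{p-1}}\ge K\alpha_0(2p-2)^{\frac{1}{p-1}}(\diam\Omega+R)^{-\alpha_0-\frac{p}{p-1}}.
\]
This is where the choice of $\alpha_0$, which makes the $p$-Laplacian of the radial part strictly positive and bounded below, is genuinely needed.

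With this change the rest of your outline goes through: the error bound uniform on the annulus, the exterior check only on $\partial\Omega_\epsilon$ with an $\eta/2$ margin, and the oddness $\A_\epsilon[-\ph]=-\A_\epsilon[\ph]$ for the $\limsup$. On the exterior check, note that $\underline v\le g$ on all of $\Rd\setminus\Omega$ is false inside $B_R(z_0)$, so you do need the restriction to $\partial\Omega_\epsilon$. Also, Theorem~\ref{exp-c3} assumes \eqref{m-M}, not just \eqref{HT}.
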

	\begin{proof}
		Fix $T>0$ and $x_0\in\partial\Omega$. By the uniform exterior ball condition, there exist $R>0$ and $z_0\in\Rd\setminus\Omega$ such that $\overline{B_R(z_0)}\cap \overline{\Omega}=\{x_0\}$. We define the lower barrier
		\[
		\wb(x,t)=k \left(|x-z_0|^{-\alpha}-R^{-\alpha}\right)+g(x_0)-\eta+\left(-\frac{t^2}{T^2}+\frac{2t}{T}-1\right)\left(\|u_0\|_{\LL^\infty(\Omega)}+\|g\|_{\LL^\infty(\Rd\setminus\Omega)}\right),
		\]
		where $\alpha=\frac{p+d-2}{p-1}$ and $k>0$ is a constant to be chosen. We will show that for every $\eta>0$ there exists $\beps\in(0,1)$ such that $\wb$ is a lower barrier for $\ue$ for every $0<\epsilon<\beps$. To this end, we prove that $\wb$ is a subsolution of the DPP and then apply Theorem~\ref{comparison}. Observe that the function $\wb(x,t)$ is smooth outside a neighborhood of the point $z_0$; hence, in particular, it is smooth in $\Omega_\epsilon$ (see Remark~\ref{remark-comparison}).
		
		\begin{itemize}[leftmargin=*]
			
			\item First, we show that $\wb(x,t+\epsilon^2/2)\leq \A_\epsilon[\wb](x,t)$ in $\Omega\times (-\epsilon^2/2,T)$.
			
			Since $\wb\in C^\infty(\Omega_\epsilon\times \RR)$ and $\gr\wb (x) =k\alpha |x-z_0|^{-\alpha-2} (x-z_0)\neq 0$ 
			for $x\in \Omega_\epsilon$, we may apply Theorem~\ref{exp-c2}. Moreover, using Remark~\ref{plap-radial-fun} to compute the $p$-Laplacian of $\wb$, we obtain
			\begin{align*}
				\frac{\A_\epsilon[\wb](x,t)-\wb(x,t+\epsilon^2/2)}{\epsilon^2/2}
				&= \bigl(\plap \wb(x,t)\bigr)^{\frac{1}{p-1}}-\partial_t \wb(x,t)+o_\epsilon(1)\\
				&= k\alpha\bigl(\alpha(p-1)+p-d\bigr)^{\frac{1}{p-1}}|x-z_0|^{-\alpha-\frac{p}{p-1}}
				+\frac{2t}{T^2}-\frac{2}{T}+o_\epsilon(1)\\
				&\geq k\alpha\bigl(\alpha(p-1)+p-d\bigr)^{\frac{1}{p-1}}
				(\diam(\Omega)+R)^{-\alpha-\frac{p}{p-1}}\\
				&\quad -\frac{2}{T}\bigl(\|u_0\|_{\LL^\infty(\Omega)}+\|g\|_{\LL^\infty(\Rd\setminus\Omega)}\bigr)+o_\epsilon(1)\\
				&\geq 1,
			\end{align*}
			where in the last step $k$ has been chosen sufficiently large, depending only on $T$, $u_0$, $g$, $p$, $d$, and $\Omega$.
			
			\item Next, we show that $\wb(x,t)\leq u_0(x)$ in $\Omega\times (-\epsilon^2/2,0]$.
			
			Since the first term of $\wb$ is negative in $\Omega$ and the parabola in $t$ that appears in the definition of $\wb$ attains its maximum in this interval at $t=0$, we clearly have
			\[
			\wb(x,t)\leq g(x_0)-\|u_0\|_{\LL^\infty(\Omega)}-\|g\|_{\LL^\infty(\Rd\setminus\Omega)}
			\leq u_0(x).
			\]
			
			\item Finally, we show that $\wb(x,t)\leq g(x)$ in $\partial\Omega_\epsilon\times (-\epsilon^2/2,T)$.
			
			If $x\in\partial\Omega\subset\Rd\setminus B_R(z_0)$, then by construction $\wb(x,t)\leq g(x_0)-\eta$. Hence, there exists $r>0$, depending on $g$ and $\eta$, such that
			\[
			\wb(x,t)\leq g(x)-\eta/2
			\qquad\text{for all }x\in \partial\Omega\cap B_r(x_0).
			\]
			
			Since $g$ is bounded on $\partial\Omega$, by choosing $k$ sufficiently large if necessary, we also have
			\[
			\wb(x,t)\leq g(x)-\eta/2
			\qquad\text{for all }x\in \partial\Omega\setminus B_r(x_0).
			\]
			
			Therefore, we conclude that
			$$\wb(x,t)\leq g(x)-\eta/2 \qquad \mbox{for all } x\in \partial\Omega\times (-\epsilon^2/2,T).$$
			
			Now we extend this bound to an $\epsilon$-neighborhood of the boundary, namely $\partial\Omega_\epsilon$ (see Remark~\ref{remark-comparison}). Since $\wb$ and $g$ are continuous and bounded, given $x\in \partial\Omega_\epsilon$ we choose $\widehat{x}\in\partial \Omega$ such that $\dist(x,\partial\Omega)=|x-\widehat{x}|<\epsilon^\gamma$. Then
			\begin{equation*}
				\wb(x,t)\leq \wb(\widehat{x},t)+\|\gr \wb\|_{\LL^\infty(\Omega_\epsilon)}\epsilon^\gamma
				\leq g(\widehat{x})-\eta/2 +\|\gr \wb\|_{\LL^\infty(\Omega_\epsilon)}\epsilon^\gamma
				\leq g(x) -\eta/8,
			\end{equation*}
			where in the last step we used that $g$ is continuous and that $\|\gr\wb\|_{\LL^\infty(\Omega_\epsilon)}\epsilon^\gamma\leq \eta/8$ for any $0<\epsilon<\beps$, with $\beps$ depending on $\eta$ small enough.
		\end{itemize}
		
		Altogether, $\wb$ is a subsolution of the DPP~\eqref{DPP}. By the comparison principle,
		$\wb(x,s)\leq \ue(x,s)$ for all $x\in\overline{\Omega}$ and $s\in(-\epsilon^2/2,T]$.
		Therefore,
		\[
		\liminf_{\epsilon\to 0^+,\, y\to x_0,\, s\to T}\ue(y,s) \geq \liminf_{\epsilon\to 0^+,\, y\to x_0,\, s\to T} \wb(y,s)= g(x_0)-\eta.
		\]
		Since $\eta>0$ is arbitrary, the result follows.

		The $\limsup$ estimate is proved analogously by considering the upper barrier
		\[
		\wa(x,t)=k\bigl(R^{-\alpha}-|x-z_0|^{-\alpha}\bigr)+g(x_0)+\eta
		+\left(\frac{t^2}{T^2}-\frac{2t}{T}+1\right)
		\left(\|u_0\|_{\LL^\infty(\Omega)}+\|g\|_{\LL^\infty(\Rd\setminus\Omega)}\right).
		\]
	\end{proof}
	
	Next, we deal with the initial data.

	\begin{lem}\label{bound-u0}
		Let $d\geq 1$, $p>2$, $\epsilon\in(0,1)$, and let $\Omega\subset \Rd$ be a bounded domain satisfying the uniform exterior ball condition. Assume $u_0\in C_b(\overline{\Omega})$, $g\in C_b(\Rd\setminus\Omega)$ and \eqref{HT}. Let  $\ue$ be the solution of the DPP, then for all $x_0\in\Omega$ we have 
		\begin{align*}
			\liminf_{\epsilon\to 0^+, y\to x_0, s\to 0^+} \ue(y,s)\geq u_0(x_0).\\
			\limsup_{\epsilon\to 0^+, y\to x_0, s\to 0^+} \ue(x,s)\leq u_0(x_0).
		\end{align*} 
	\end{lem}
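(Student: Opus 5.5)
We argue by barriers and the comparison principle (Theorem~\ref{comparison}), in the same spirit as Lemma~\ref{bound-g}. We only give the $\liminf$ estimate; the $\limsup$ one then follows by applying it to $-\ue$. This is legitimate because $\A_\epsilon[-\ph]=-\A_\epsilon[\ph]$, so $-\ue$ solves the DPP with data $-u_0$ and $-g$.

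Fix $x_0\in\Omega$ and $\eta>0$. By continuity of $u_0$ there is $\delta>0$ with $B_{2\delta}(x_0)\subset\Omega$ and $u_0(x)\ge u_0(x_0)-\eta$ for $|x-x_0|<\delta$. We take as lower barrier the smooth radial function
\[
\wb(x,t)=u_0(x_0)-\eta-L\,|x-x_0|^{\beta}-K t,\qquad \beta=\frac{3p-2}{p-1},
\]
with constants $L,K>0$ to be chosen. The exponent $\beta$ is the one from Remark~\ref{plap-radial-fun}; with it $\plap|x-x_0|^\beta$ is continuous, vanishes at $x_0$, and satisfies $|\plap \wb|\le C(L,\Omega)$ on bounded sets. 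The initial condition is checked as follows. Set $\Lambda=2(\|u_0\|_{\LL^\infty}+\|g\|_{\LL^\infty})$ and choose $L\ge \Lambda\delta^{-\beta}$. Then $\wb(\cdot,t)\le u_0$ in $\Omega$ for $t\le 0$: near $x_0$ this holds by the choice of $\delta$ (note $-Kt\ge0$ only for $t\le0$, and on $(-\epsilon^2/2,0]$ it is at most $K\epsilon^2/2$, which we absorb by replacing $\eta$ with $2\eta$ for $\epsilon$ small), and away from $x_0$ it holds because $L|x-x_0|^\beta\ge\Lambda$. The same choice of $L$ gives $\wb\le -\|g\|_{\LL^\infty}\le g$ in $\Rd\setminus\Omega$, and in fact on $\partial\Omega_\epsilon$, since $|x-x_0|\ge 2\delta$ there.

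The main step is the interior subsolution inequality $\wb(x,t+\epsilon^2/2)\le\A_\epsilon[\wb](x,t)$ in $\Omega\times(-\epsilon^2/2,T)$, and it must hold \emph{uniformly} in $x$. Theorem~\ref{exp-c2} gives only a pointwise $o(\epsilon^2)$, so we use the uniform estimates of Theorem~\ref{exp-c3-grad}(b) (equivalently Remark~\ref{rem-error}). Since $\wb$ is $C^3$ near $\overline\Omega$, except possibly for the $D^3$ term at $x_0$, which Theorem~\ref{exp-c3-grad} does not use, we have
\[
\frac{\A_\epsilon[\wb](x,t)-\wb(x,t+\epsilon^2/2)}{\epsilon^2/2}\ge -\big|(\plap\wb)^{\frac1{p-1}}\big| + K - C\Big(\|D^2\wb\|^{1-\alpha}|\gr\wb(x)|^{\alpha}+\epsilon^{2-\frac4p}\|D^2\wb\|+\epsilon^{\frac{2}{3p-4}}|\gr\wb(x)|\Big).
\]
Every term on the right except $K$ is bounded by a constant $C_1$ that depends on $L,\Omega,p,d$ but not on $(x,t)$ or on $\epsilon\in(0,1)$. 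Choosing $K>C_1$ makes the right-hand side positive, so $\wb$ is a subsolution. This is the delicate point we expect to be the main obstacle: the constant $C_1$ must be genuinely uniform, including at points where $\gr\wb$ is small, and this is exactly the situation that Theorem~\ref{exp-c3-grad} was designed to handle.

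By Theorem~\ref{comparison} (in the version of Remark~\ref{remark-comparison}) we get $\ue\ge\wb$. Hence
\[
\liminf_{\epsilon\to0^+,\,y\to x_0,\,s\to0^+}\ue(y,s)\ge \lim\wb(y,s)=u_0(x_0)-2\eta,
\]
and letting $\eta\to0$ proves the $\liminf$ estimate. The upper barrier $\wa=u_0(x_0)+\eta+L|x-x_0|^\beta+Kt$ gives the $\limsup$ estimate in the same way.
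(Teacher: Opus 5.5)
Your argument is correct and is essentially the paper's proof: the same radial lower barrier $u_0(x_0)-\eta-L|x-x_0|^{\beta}$ with $\beta=\frac{3p-2}{p-1}$, a time term with large slope, and the comparison principle (Theorem~\ref{comparison}). The differences are refinements. You check the interior inequality with the uniform bound of Theorem~\ref{exp-c3-grad}, needing only a bounded error that $K$ then dominates; this makes the uniformity in $x$ explicit, whereas the paper cites the pointwise $o_\epsilon(1)$ of Theorems~\ref{exp-c2} and~\ref{exp-c2-grad}. Your linear time term also vanishes at $t=0$, which the paper's $k_1(1-e^{t+1})$ as written does not.
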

	\begin{proof}
		Let us fix $x_0\in\Omega$. We define the lower barrier
		$$
		\wb(x,t)=k_1(1-e^{t+1})-k_2\left(\|u_0\|_{\LL^\infty(\Omega)}+\|g\|_{\LL^\infty(\Rd\setminus \Omega)}\right)|x-x_0|^\beta+u_0(x_0)-\eta.
		$$
		Here $k_1,k_2>0$ are constants to be determined and $\beta=\frac{3p-2}{p-1}=\frac{p}{p-1}+2$.
		
		Note that, by the choice of $\beta$ sufficiently large, $\wb\in C^2(\Rd\times\RR^+)$. Moreover, the $p$-Laplacian of $\wb$ at the point $x_0$ can be extended continuously so as to take the value $0$.
		
		We will show that, for every $\eta>0$, there exists an $\beps\in(0,1)$ such that the function $\wb$ is a lower barrier for $\ue$ for every $\epsilon\in(0,\beps)$. To this end, we want to show that $\wb$ is subsolution to the DPP and after that, apply the Comparison Principle. We split the proof into three steps.
		
		\medskip
		\textit{Step 1: Choice of $k_2$ to ensure the boundary and initial inequalities.}
		\begin{itemize}[leftmargin=*]
			\item Let us show that $\wb(x,t)\leq u_0(x)$ in $\Omega\times (-\epsilon^2/2,0]$.
			
			Note that $\wb(x,t)\leq u_0(x_0)-\eta$ since the first term in $\wb$ is nonpositive and the second term is nonnegative. Hence, by the continuity of $u_0$, there exists a radius $r>0$ depending only on $\eta$ and $u_0$, such that
			$$
			\wb(x,t)\leq u_0(x)-\frac{\eta}{2}
			\qquad \text{for all } x\in B_r(x_0)\subset\Omega.
			$$
			If instead $x\in\Omega\setminus B_r(x_0)$, we have $-|x-x_0|^\beta\leq -r^\beta$, and therefore, if $k_2\geq 2r^{-\beta}$,
			$$
			\wb(x,t)\leq -2\left(\|u_0\|_{\LL^\infty(\Omega)}+\|g\|_{\LL^\infty(\Rd\setminus\Omega)}\right)+u_0(x_0)-\eta\leq u_0(x)-\eta \qquad \text{for all } x\in\Omega\setminus B_r(x_0).
			$$
			\item Let show now that $\wb(x,t)\leq g(x)$ in $\partial\Omega_\epsilon\times (-\epsilon^2/2,0]$, remember Remark \ref{remark-comparison}.
			
			As in the proof of Lemma \ref{bound-g}, we first see that the previous inequality holds for any $x\in \partial\Omega$. Since $x_0\in\Omega$, we have $|x-x_0|^\beta\geq \left(\frac{\dist(x_0,\partial\Omega)}{2}\right)^\beta$. Thus, if $k_2\geq \left(\frac{2}{\dist(x_0,\partial\Omega)}\right)^\beta$, it follows that
			$$
			\wb(x,t)\leq -\left(\|u_0\|_{\LL^\infty(\Omega)}+\|g\|_{\LL^\infty(\Rd\setminus\Omega)}\right)+u_0(x_0)-\eta\leq g(x)-\eta\qquad \mbox{for all }x\in\partial\Omega.
			$$
			By the continuity of $g$ and $\wb$ we could obtain, as in the proof of the last lemma, the inequality in a neiboorhood of the boundary. Let $x\in \partial\Omega_\epsilon$, we choose $\widehat{x}\in\partial\Omega$ such that $\dist(x,\partial\Omega)=|x-\widehat{x}|<\epsilon^\gamma$, then
			$$\wb(x,t)\leq \wb(\widehat{x},t)+\|\gr \wb\|_{\LL^\infty(\Omega_\epsilon)}\epsilon^\gamma
			\leq g(\widehat{x})-\eta/2 +\|\gr \wb\|_{\LL^\infty(\Omega_\epsilon)}\epsilon^\gamma
			\leq g(x) -\eta/8.$$
		\end{itemize}
		
		Collecting the above conditions on constant $k_2$ to be large enough, we fix it as follows
		$$
		k_2:=2\max\left\{
		2r^{-\beta},
		\left(\frac{2}{\dist(x_0,\partial\Omega)}\right)^\beta
		\right\}.
		$$
		
		\medskip
		
		\textit{Step 2: Choice of $k_1$ to ensure the inequality for the equation.}
		
		Let show that $\A_\epsilon[\wb](x,t)\geq \wb(x,t+\epsilon^2/2)$ in $\Omega\times(-\epsilon^2/2,\infty)$. Let $x\in\Omega$ and $t\in(-\epsilon^2/2,\infty)$. Since $\wb\in C^2(\Rd\times\RR)$, we use Theorem \ref{exp-c2} if $\gr\wb\not=0$, or Theorem \ref{exp-c2-grad} if $\gr\wb=0$ to obtain  
		\begin{align*}
			\frac{\A_\epsilon[\wb](x,t)-\wb(x,t+\epsilon^2/2)}{\epsilon^2/2}
			&=\left(\plap \wb(x,t)\right)^{\frac{1}{p-1}}-\partial_t \wb(x,t)+o_\epsilon(1)\\
			&= -C_{p,d} k_2
			|x-x_0|^{\beta-\frac{p}{p-1}}\left(\|u_0\|_{\LL^\infty(\Omega)}+\|g\|_{\LL^\infty(\Rd\setminus\Omega)}\right)+k_1 e^{t+1}+o_\epsilon(1)\\
			&\geq -C_{p,d}
			\diam(\Omega)^2 k_2
			\left(\|u_0\|_{\LL^\infty(\Omega)}+\|g\|_{\LL^\infty(\Rd\setminus\Omega)}\right)
			+k_1+o_\epsilon(1).
		\end{align*}
		
		Here, we have used Remark \ref{plap-radial-fun} to compute the $p$-Laplacian and $C_{d,p}=\frac{3p-2}{p-1}(2p-2+d)^{\frac{1}{p-1}}$. Choosing constant $k_1$ large, $$
		k_1:=C_{d,p}
		\diam(\Omega)^2 k_2
		\left(\|u_0\|_{\LL^\infty(\Omega)}+\|g\|_{\LL^\infty(\Rd\setminus\Omega)}\right)+2,
		$$
		we ensure that the right-hand side is greater than or equal to $1$, obtaining $$\A_\epsilon[\wb](x,t)\geq \wb(x,t+\epsilon^2/2).$$
		
		\medskip
		\textit{Step 3: Conclusion.}
		
		From the previous steps, $\wb$ is a subsolution of the DPP. By the comparison principle,
		\[
		\wb(x,t)\leq \ue(x,t)\quad \text{for all } x\in\overline{\Omega},\ t\in(-\epsilon^2/2,\infty).
		\]
		Therefore,
		$$
		\liminf_{\epsilon\to 0^+,\, y\to x_0,\, s\to 0^+}\ue(y,s)
		\geq \liminf_{\epsilon\to 0^+,\, y\to x_0,\, s\to 0^+}\wb(y,s)
		=u_0(x_0)-\eta.
		$$
		Since $\eta>0$ is arbitrary, the claim follows. 
		
		The corresponding $\limsup$ inequality is obtained by repeating the above arguments with the upper barrier
		$$
		\wa(x,t)=k_1(e^{t+1}-1)
		+k_2\left(\|u_0\|_{\LL^\infty(\Omega)}+\|g\|_{\LL^\infty(\Rd\setminus\Omega)}\right)|x-x_0|^\beta
		+u_0(x_0)+\eta.
		$$
	\end{proof}
	
	Finally, recalling the definition of the operator $\A_\epsilon $ given in~\eqref{operator-A}, we can immediately state the following properties.
	
	\begin{rem}\label{properties-A}\rm
		Let $\ph, \ph_1, \ph_2:\Rd\times \RR^+\to \RR$ be measurable functions such that $\ph_1\geq\ph_2$, and let $\xi$ be a constant. Then the following identities hold:
		$$
		\A_\epsilon[\ph_1](x,t)\geq\A_\epsilon[\ph_2](x,t)
		\quad \text{and} \quad
		\A_\epsilon[\ph+\xi](x,t)=\A_\epsilon[\ph](x,t)+\xi.
		$$
	\end{rem}
	
	Now, we are ready to prove the uniform convergence, as $\epsilon\to 0^+$, of the solutions of the DPP to the viscosity solution of problem~\eqref{problem-intro-1}. 
	
	\begin{thm}\label{convergence}
		Let $d\geq 1$, $p>2$, $\epsilon >0$, and let $\Omega\subset\Rd$ be a bounded domain satisfying the uniform exterior ball condition. Let $u_0\in C_b(\Omega)$, $g\in C_b(\Rd\setminus\Omega)$ and assume that \eqref{m-M} holds. Let $\ue$ be the solution of the DPP~\eqref{DPP-main-1} (or equivalently \eqref{DPP}). For all $x\in\overline{\Omega}$ and $t\geq 0$ we consider
		\begin{align*}
			\ua(x,t)&:=\limsup_{\epsilon\to 0^+,\, y\to x,\, s\to t}\ue(y,s),\\
			\ub(x,t)&:=\liminf_{\epsilon\to 0^+,\, y\to x,\, s\to t}\ue(y,s).
		\end{align*}
		Then, it holds that 
		$$\ua(x,t)=\ub(x,t),$$ for all $(x,t)\in\overline{\Omega}\times\RR^+$, and consequently the limit as $\epsilon \to 0^+$ of $u_\epsilon$
		exists. 
		This limit coincides with $u$ the viscosity solution of~\eqref{problem-intro-1},
		\begin{equation}\label{convergence-1}
			\ue(x,t)\to u(x,t)=\ua(x,t)=\ub(x,t)
			\quad \mbox{uniformly in } \overline{\Omega}\times\RR^+
			\quad \mbox{as } \epsilon\to 0^+.
		\end{equation}
	\end{thm}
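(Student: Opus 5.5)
We follow the Barles--Souganidis half-relaxed limits strategy. The plan is to show that $\ua$ is a viscosity subsolution and $\ub$ a viscosity supersolution of \eqref{problem-intro-1}, including the initial and boundary conditions, and then conclude with the comparison principle for viscosity solutions of the doubly nonlinear equation established in \cite{Lindgren_Hynd_2016}.

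First, Lemma \ref{bounded} guarantees that $\ua$ and $\ub$ are finite. By construction, $\ua$ is upper semicontinuous, $\ub$ is lower semicontinuous, and $\ub\le\ua$. Lemma \ref{bound-g} gives $\ub\ge g\ge\ua$ on $\partial\Omega\times(0,\infty)$, and Lemma \ref{bound-u0} gives $\ub(\cdot,0)\ge u_0\ge \ua(\cdot,0)$ in $\Omega$. So both half-relaxed limits attain the data in the classical sense, and there are no generalized boundary conditions to deal with.

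The core step is the interior subsolution property for $\ua$; the supersolution property for $\ub$ is symmetric. Let $\ph\in C^2$ touch $\ua$ from above at $(x_0,t_0)\in\Omega\times(0,\infty)$, strictly after a standard modification, say by adding $|x-x_0|^4+(t-t_0)^2$. By the usual argument, there are $\epsilon_j\to0$ and points $(x_j,t_j)\to(x_0,t_0)$ that are approximate maximizers of $u_{\epsilon_j}-\ph$ over a small closed cylinder, with error $\epsilon_j^3$. Such maximizers exist even though $\ue$ is merely measurable. Writing $c_j=u_{\epsilon_j}(x_j,t_j)-\ph(x_j,t_j)$, we get $u_{\epsilon_j}\le \ph+c_j+\epsilon_j^3$ near $(x_0,t_0)$. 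The DPP \eqref{DPP-main-1} together with the monotonicity and translation invariance of Remark \ref{properties-A} then yields
$$\ph(x_j,t_j)+c_j=u_{\epsilon_j}(x_j,t_j)=\A_{\epsilon_j}[u_{\epsilon_j}](x_j,t_j-\epsilon_j^2/2)\le \A_{\epsilon_j}[\ph](x_j,t_j-\epsilon_j^2/2)+c_j+\epsilon_j^3.$$
This is legitimate since $\A_\epsilon$ only samples balls of radius at most $\epsilon^\gamma\to0$, so the comparison is only needed in a shrinking neighbourhood. Next we apply the expansion of Theorem \ref{thm:AMVFparab-intro} at the moving points $(x_j,t_j-\epsilon_j^2/2)$. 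This gives $0\le \frac{\epsilon_j^2}{2}\big((\plap\ph)^{\frac1{p-1}}-\partial_t\ph\big)(x_j,\cdot)+\epsilon_j^2\widehat E+\epsilon_j^3$. Dividing by $\epsilon_j^2$ and letting $j\to\infty$ gives $\partial_t\ph(x_0,t_0)\le(\plap\ph(x_0,t_0))^{\frac1{p-1}}$, which is the subsolution inequality, provided $\widehat E\to0$ uniformly along the sequence.

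The main obstacle is exactly this uniformity of the error at moving points, especially when $\gr\ph(x_0,t_0)=0$. First reduce to $C^3$ test functions, which is standard for parabolic viscosity theory (or approximate $\ph$ in $C^2$). If $\gr\ph(x_0,t_0)\ne0$, Theorem \ref{exp-c3} b) gives a bound that is uniform near $(x_0,t_0)$, since $|\gr\ph|$ stays bounded below there. If $\gr\ph(x_0,t_0)=0$, we use Theorem \ref{exp-c3-grad}: its error bound is $k_3(\|D^2\ph\|^{1-\alpha}|\gr\ph(x_j)|^\alpha+\epsilon^{2-4/p}\|D^2\ph\|+\epsilon^{\frac2{3p-4}}|\gr\ph(x_j)|)+O(\epsilon^2)$, and this tends to $0$ because $\gr\ph(x_j,t_j)\to0$. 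Moreover, $(\plap\ph)^{1/(p-1)}(x_j)\to0=(\plap\ph)^{1/(p-1)}(x_0)$ by the bound $|(\plap\ph)^{1/(p-1)}|\le|\gr\ph|^\alpha\|D^2\ph\|^{1-\alpha}$. This is where Remark \ref{rem-error} is used. Finally, the comparison principle of \cite{Lindgren_Hynd_2016}, applied to the subsolution $\ua$ and supersolution $\ub$ with the same data, gives $\ua\le\ub$. Hence $\ua=\ub=u$ is continuous and is the unique viscosity solution. The equality of the half-relaxed limits then yields local uniform convergence of $\ue$ on $\overline\Omega\times[0,T]$ by the standard argument, and uniform convergence on $\overline\Omega\times\RR^+$ to the extent the barriers are uniform in $T$.
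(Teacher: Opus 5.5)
Your proposal is correct and follows essentially the same route as the paper: half-relaxed limits $\ua,\ub$, Lemmas \ref{bound-g} and \ref{bound-u0} for the boundary and initial data, and quasi-maximizers of $u_{\epsilon_j}-\ph$ combined with the monotonicity and translation invariance of $\A_\epsilon$. As in the paper, you then use the uniform error bounds of Theorems \ref{exp-c3} and \ref{exp-c3-grad}, split according to whether $\gr\ph(x_0,t_0)$ vanishes, and close with the comparison principle of \cite{Lindgren_Hynd_2016}. The differences are cosmetic: an $\epsilon_j^3$ slack instead of $e^{-1/\epsilon_n}$, your explicit remark that $\A_\epsilon$ only sees an $\epsilon^\gamma$-neighbourhood, and your caution that the argument gives uniform convergence on $\overline\Omega\times[0,T]$ rather than on all of $\overline\Omega\times\RR^+$, which is more careful than the paper's statement.
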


	\begin{proof}
		Define $\ua$ and $\ub$ as above for all $x\in\overline{\Omega}$ and $t\geq 0$. By construction, we have $\ub(x,t)\leq \ua(x,t)$. 
		Hence, uniform convergence will be established once we prove the reverse inequality. 
		This follows from the fact that $\ub(x,t)$ is a supersolution of \eqref{problem-intro-1}
		and $\ua(x,t)$ is a subsolution using the comparison principle for the parabolic problem (see Section~4 of~\cite{Lindgren_Hynd_2016}). 
		Therefore, if this holds, we conclude that the functional limit exists and coincides with $u$.
		
		Before proceeding, we list several facts that must be taken into account:
		
		On the one hand, if $x\in\partial\Omega$ and $t>0$, Lemma~\ref{bound-g} yields
		\[
		\ub(x,t)=\liminf_{\epsilon\to 0^+,\, y\to x,\, s\to t}\ue(y,s)
		\geq g(x)
		\geq \limsup_{\epsilon\to 0^+,\, y\to x,\, s\to t}\ue(y,s)
		=\ua(x,t).
		\]
		On the other hand, if $x\in\Omega$ and $t=0$, Lemma~\ref{bound-u0} implies
		\[
		\ub(x,0)=\liminf_{\epsilon\to 0^+,\, y\to x,\, s\to 0^+}\ue(y,s)
		\geq u_0(x)
		\geq \limsup_{\epsilon\to 0^+,\, y\to x,\, s\to 0^+}\ue(y,s)
		=\ua(x,0).
		\]
		
		Therefore, we only need to compare $\ua$ and $\ub$ at the points $x\in\Omega$ and $t>0$. First, let us show that 
		$\ua$ is a viscosity subsolution of~\eqref{problem-intro-1}.
		Let $(x_0,t_0)\in\Omega\times\RR^+$ and let $R,\tau>0$ be sufficiently small.  
		The upper semicontinuous function $\ua$ is a viscosity subsolution if, see Definition \ref{viscosity-dirichlet}, for every test function
		$\ph\in C^\infty(B_R(x_0)\times[t_0-\tau,t_0+\tau])$ such that $\ph(x_0,t_0)=\ua(x_0,t_0)$ and $\ph(x,t)>\ua(x,t)$ for all $(x,t)\neq(x_0,t_0)$, the following inequality holds:
		\[
		|\partial_t\ph(x_0,t_0)|^{p-2}\partial_t\ph(x_0,t_0)\leq \plap\ph(x_0,t_0).
		\]
		
		The condition above implies that $(x_0,t_0)$ is a strict maximum of $\ua-\ph$.  
		By~\cite{Barles_Souganidis_1991}, there exists a sequence
		$(\epsilon_n,y_n,t_n)\to(0^+,x_0,t_0)$ such that $(y_n,t_n)$ are "quasi-maximum" points of $u_{\epsilon_n}-\ph$, namely,
		\begin{equation}\label{quasi-max}
			u_{\epsilon_n}(x,t)-\ph(x,t)
			\leq u_{\epsilon_n}(y_n,t_n)-\ph(y_n,t_n)+e^{-1/\epsilon_n},
		\end{equation}
		for all $(x,t)\in B_R(x_0)\times[t_0-\tau,t_0+\tau]$.
		
		Notice that, for $n$ large enough, $(y_n,t_n)\in\Omega\times\RR^+$. Set
		$\xi_n:=u_{\epsilon_n}(y_n,t_n)-\ph(y_n,t_n)$. Using~\eqref{quasi-max}, we obtain
		\begin{align*}
			0
			&=\frac{u_{\epsilon_n}(y_n,t_n)-\A_{\epsilon_n}[u_{\epsilon_n}](y_n,t_n-\epsilon_n^2/2)}{\epsilon_n^2/2} \\
			&=\frac{\ph(y_n,t_n)+\xi_n-\A_{\epsilon_n}[u_{\epsilon_n}](y_n,t_n-\epsilon_n^2/2)}{\epsilon_n^2/2} \\
			&\geq \frac{\ph(y_n,t_n)+\xi_n-\A_{\epsilon_n}[\ph+\xi_n+e^{-1/\epsilon_n}](y_n,t_n-\epsilon_n^2/2)}{\epsilon_n^2/2} \\
			&=\frac{\ph(y_n,t_n)-e^{-1/\epsilon_n}-\A_{\epsilon_n}[\ph](y_n,t_n-\epsilon_n^2/2)}{\epsilon_n^2/2} \\
			&=\partial_t\ph(y_n,t_n-\epsilon_n^2/2)
			-\bigl(\plap\ph(y_n,t_n-\epsilon_n^2/2)\bigr)^{\frac{1}{p-1}}
			+E(\epsilon,\ph, y_n, t_n)
			-2\frac{e^{-1/\epsilon_n}}{\epsilon_n^2}.
		\end{align*}
		
		In the last line we have used Theorems~\ref{exp-c2}, \ref{exp-c2-grad}, \ref{exp-c3} and~\ref{exp-c3-grad}. The uniformity of the asymptotic error for $C^3$ test functions is crucial here, since it allows us to control the error uniformly in $y_n$. That is, if $\nabla \phi(x_0,t_0)\neq 0$, then there exists a neighbourhood of this point in which $\|\nabla \phi\|^{-1}$ is uniformly bounded. Moreover, for $\epsilon$ small enough, the points $(y_\epsilon,t_\epsilon)$ all lie within this neighbourhood, and estimate \eqref{error-para} is valid for all of them. 
		
		If, on the contrary, $\nabla \phi(x_0,t_0)=0$, then there exists a neighbourhood in which $\|\nabla \phi\|$ is small and, once the points $(y_\epsilon,t_\epsilon)$ all fall within this neighbourhood. Hence, estimate \eqref{error-para-grad} is valid for all of them.
		
		Taking limits as $n\to\infty$, by the regularity of $\ph$ we obtain
		\[
		0\geq \partial_t\ph(x_0,t_0)
		-\bigl(\plap\ph(x_0,t_0)\bigr)^{\frac{1}{p-1}}\Longrightarrow |\partial_t\ph(x_0,t_0)|^{p-2}\partial_t\ph(x_0,t_0)\leq \plap\ph(x_0,t_0).
		\]
		
		The proof of the fact that $\ub$ is a viscosity supersolution of~\eqref{problem-intro-1} follows in a similar way.
	\end{proof}
	
	
		\section{The associated game}
	As we said in Section \ref{Sec2.5}, the DPP we studied in Section \ref{sec4} is related to a game. Here, our goal is to describe the game and show that it has a value that coincides with the solution to the DPP. 
	We are dealing with a two-player (that we call Player $I$ and Player $II$) zero-sum game. There is a parameter $\epsilon>0$ and an initial position $(x_0,t_0)\in\Omega\times (0,\infty)$. Then, the players update the position according to the following rules. 
	\begin{enumerate}[label=(\roman*)]
		
		\item[(I)] At each round, a fair coin is tossed with probability $1/2$ for heads and $1/2$ for tails.
		
		\item[(II)(a)] If the outcome of (I) is heads, Player $I$ may either keep the spatial position $x_{k+1}=x_k$ and update the time $t_{k+1}=t_k-\epsilon^2/2$, or initiate the following procedure. Player $II$ selects a parameter $c\in[m(\epsilon),M(\epsilon)]$, after which a biased coin is tossed with probabilities $\alpha=(p-2)/(p-1)$ for heads and $1-\alpha$ for tails:
		\begin{itemize}[leftmargin=1em]
			\item If heads occurs, Player $I$ chooses $x_{k+1}\in B_{\epsilon^2 c^{1-\alpha}}(x_k)$ and $t_{k+1}=t_k-\epsilon^2/2$.
			\item If tails occurs, a round of tug-of-war with noise is played in $B_{\gamma\epsilon c^{-\alpha/2}}(x_k)$ with probabilities $\beta$ and $1-\beta$ (this game was described in Section \ref{Sec2.5}), which determines $x_{k+1}$, and $t_{k+1}=t_k-\epsilon^2/2$.
		\end{itemize}
		
		\item[(II)(b)] If the outcome of (I) is tails, the same procedure as in (II)(a) is followed, but with the roles of Player $I$ and Player $II$ interchanged.
		
		\item[(III)] The procedure in (I) and (II) is repeated until the game ends, this happens the first time  
		the spacial position lies outside $\Omega$ of when time is exhausted. When the game ends, the final payoff os the game is given by: if $t_{k+1}\le 0$ with $x_{k+1}\in\Omega$, Player $II$ pays $u_0(x_{k+1})$ to Player $I$; if $x_{k+1}\notin\Omega$ with $t_{k+1}>0$, Player $II$ pays $g(x_{k+1})$ to Player $I$.
	\end{enumerate}
	
	The game described above generates a sequence of states
	\[
	P=\left\{(x_0,t_0), (x_1,t_1), \ldots , (x_\tau,t_\tau)\right\},
	\]
	where $(x_0,t_0)\in \Omega \times (0,\infty)$ is the initial state of the game and $(x_\tau,t_\tau)$ is the final state (when either $x_\tau\in \Rd\setminus \Omega$ or $t_\tau\leq 0$). Observe that the game always finishes in a finite number of plays, since for any $t_0>0$ and $\epsilon>0$ there exists $k\in \NN$ such that $t_k=t_0-k\epsilon^2/2\leq 0$. Now, we write the final payoff function as 
	$$\Phi(x_\tau,t_\tau)=\left\{\begin{array}{ll} u_0(x_\tau) & \mbox{if }(x_\tau,t_\tau)\in\Omega \times (-\infty,0],\\[4pt] g(x_\tau) &\mbox{if } (x_\tau,t_\tau)\in \Rd\setminus \Omega\times \RR.\end{array}\right.$$
	
	A strategy for Player~$I$ is a function $S_I$ defined on the partial histories of the game that specifies the choices that Player~$I$ will make. On the one hand, if the first fair coin lands heads, it determines whether Player~$I$ remains at the same position $x_1 = x_0$ or proceeds to play the game described in~(II). That is, for any choice of $c \in [m(\epsilon), M(\epsilon)]$, $S_I$ determines the next position $x_1 \in B_{\epsilon^2 c^{1-\alpha}}(x_0)$ if the second biased coin lands heads with probability $\alpha$, as well as the next position when Player~$I$ plays tug-of-war with probabilities $\beta$ and $1-\beta$.
	
	On the other hand, if the first coin lands tails, $S_I$ specifies how Player~$I$ chooses $c \in [m(\epsilon), M(\epsilon)]$ when Player~$II$ decides not to remain at the same position and also chooses the next preferred position according to the second biased coin toss (when they play tug-of-war with noise).
	
	Similarly, a strategy for Player~$II$ is a function $S_{II}$, also defined on the partial histories, that specifies how this player acts at every round of the game.
	
	Once the two players have fixed their strategies, $S_I$ and $S_{II}$, everything depends on the initial position and the coin tosses, which induce a probability measure on the space of sequences of states. With this probability, we can compute the expected outcome when playing with strategies $S_I$ and $S_{II}$ starting from $(x_0,t_0)$,
	\[
	\mathbb{E}_{S_I, S_{II}}^{(x_0,t_0)}\left(\Phi(x_\tau,t_\tau)\right).
	\]
	
	Since Player~$I$ tries to maximize the expected outcome and Player~$II$ aims to minimize it, the value for Player~$I$ and Player~$II$ are
	defined as
	\[
	u^\epsilon_I(x_0,t_0)
	=
	\sup_{S_{I}}
	\left(
	\inf_{S_{II}}
	\mathbb{E}_{S_I, S_{II}}^{(x_0,t_0)}
	(\Phi(x_\tau,t_\tau))
	\right) \quad \mbox{and}\quad
	u^\epsilon_{II}(x_0,t_0)
	=
	\inf_{S_{II}}
	\left(
	\sup_{S_{I}}
	\mathbb{E}_{S_I, S_{II}}^{(x_0,t_0)}
	(\Phi(x_\tau,t_\tau))
	\right),
	\]
	respectively. Notice that it triavially holds $u^\epsilon_I\leq u^\epsilon_{II}$. We say that the game has a value when $u^\epsilon_I=u^\epsilon_{II}$.
	
	Before providing a rigorous proof that $\ue$, the solution to the DPP \eqref{DPP}, is the value function for this game, we present a heuristic interpretation. Assume for a moment that the first fair coin lands heads. Then Player~$I$, who seeks to maximize the payoff, decides whether to remain at the same position (thereby reducing the remaining time) or to move according to the game described in~(II)(a) and~(II)(b).
	If Player~$I$ decides to move, once Player~$II$ chooses $c\in[m(\epsilon),M(\epsilon)]$, if the first player selects the next position the expected value is given by 
	\[
	\sup_{B_{\epsilon^2 c^{1-\alpha}}(x_0)}\ue(\cdot,t_1),
	\]
	while when tug-of-war with noise is played, the expected outcome is given by
	\begin{align*}
		M_{\epsilon c^{-\alpha/2}}[\ue](x_0,t_1)
		&=\frac{\beta}{2}\Big( \sup_{B_{\gamma \epsilon c^{-\alpha/2}}(x_0)}\ue(\cdot,t_1)
		+ \inf_{B_{\gamma \epsilon c^{-\alpha/2}}(x_0)}\ue(\cdot,t_1) \Big) + (1-\beta)\intn_{B_{\gamma \epsilon c^{-\alpha/2}}(x_0)} \ue(y,t_1)\dy.
	\end{align*}
	
	Thus, taking into account that Player~$II$ wants to minimize the payoff, if the first coin toss is heads, 
	we obtain as expected outcome of one round of the game
	\[
	\max\left[\inf_{c\in[m,M]}\left\{\alpha\sup_{B_{\epsilon^2 c^{1-\alpha}(x_0)}}\ue(\cdot,t_1) + (1-\alpha) M_{\epsilon c^{-\alpha/2}}[\ue](x_0,t_1)\right\}, \, \ue(x_0,t_1)\right],
	\]
	that is, the first term of the operator $\A_\epsilon$. A similar argument yields the second term.

	\begin{thm}\label{game-thm}
		Let the assumptions of Theorem \ref{convergence} hold. Then, the game defined in {\rm (I), (II)(a), (II)(b)} and {\rm (III)} has a 
		value which is characterized by the unique solution $\ue$ to the DPP \eqref{DPP}, that is
		$$\ue(x,t)=u_I^\epsilon(x,t)=u_{II}^\epsilon(x,t)\qquad\mbox{for }(x,t)\in \Rd\times \RR.$$
		
		In particular, the value of the game converges uniformly in $\overline{\Omega}\times\RR^+$ as $\epsilon\to 0^+$ to the unique solution to the parabolic problem \eqref{problem-intro-1}.
	\end{thm}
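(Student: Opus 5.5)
We follow the standard scheme for parabolic tug-of-war games (as in \cite{MPRparabolico}, \cite{libroBR}): prove $u^\epsilon_{II}\le \ue$ and $\ue\le u^\epsilon_I$. Combined with the trivial inequality $u^\epsilon_I\le u^\epsilon_{II}$, this gives equality. The key simplification compared with elliptic games is that every round decreases time by exactly $\epsilon^2/2$. Starting from $(x_0,t_0)$, the game therefore ends after at most $N=\lceil 2t_0/\epsilon^2\rceil$ rounds, deterministically. So no concern about games that never end arises, and no penalization of strategies is needed: optional stopping for a bounded stopping time suffices. Outside $\Omega\times(0,\infty)$ the identity holds by definition, since there $\ue=\Phi$.

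\textbf{Step 1: $u^\epsilon_{II}\le \ue$.} Fix $\eta>0$. We let Player $II$ follow a strategy $S^0_{II}$ that is almost optimal in the DPP, choosing at each round $k$ so as to lose at most $\eta 2^{-(k+1)}$. Concretely, in a heads round, if Player $I$ decides to move, Player $II$ chooses $c$ attaining the infimum in the first term of \eqref{operator-A} (evaluated on $\ue(\cdot,t_k-\epsilon^2/2)$) up to $\eta 2^{-(k+1)}$, and in the tug-of-war part chooses a point nearly attaining $\inf_{B}\ue(\cdot,t_{k+1})$. In a tails round, Player $II$ decides between staying (value $\ue(x_k,t_{k+1})$) and moving so as to realize the $\min$ in the second term. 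In that branch Player $I$ picks $c$, and Player $II$ chooses a near-minimizing point in $B_{\epsilon^2c^{1-\alpha}}(x_k)$ when the $\alpha$-coin allows, and near-minimizing points in tug-of-war. Fix an arbitrary strategy $S_I$. We then show that
\[M_k=\ue(x_k,t_k)+\eta 2^{-k}\]
is a supermartingale with respect to the natural filtration. Conditioning on the history, the expected value of $\ue(x_{k+1},t_{k+1})$ is bounded by the corresponding term of $\A_\epsilon[\ue](x_k,t_k-\epsilon^2/2)$ plus $\eta2^{-(k+1)}$. This works because Player $I$'s choices (stay or move, $c$ in tails rounds, points) can only yield values at most the sup/inf expressions appearing in \eqref{operator-A}, while Player $II$'s are near-optimal. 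The DPP \eqref{DPP-main-1} then gives $\mathbb E[M_{k+1}\mid\mathcal F_k]\le M_k$. Since $\tau\le N$ is bounded, optional stopping yields $\mathbb E_{S_I,S^0_{II}}[\Phi(x_\tau,t_\tau)]\le \ue(x_0,t_0)+\eta$. Taking the supremum over $S_I$, then using $\inf_{S_{II}}\le$, and letting $\eta\to0$ gives $u^\epsilon_{II}\le \ue$.

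\textbf{Step 2: $\ue\le u^\epsilon_I$.} This is symmetric: Player $I$ plays almost optimally, and $\ue(x_k,t_k)-\eta2^{-k}$ is a submartingale for any $S_{II}$.

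\textbf{Main obstacle.} The delicate point is measurability. The near-optimal selections (the choices of $c$ and of points nearly attaining sup/inf over balls) must be made measurably, so that the strategies define a legitimate probability measure on histories and conditional expectations make sense. A related issue is that the averages $\fint\ue(y,t_{k+1})\,dy$ need $\ue(\cdot,t_{k+1})$ to be measurable. I would handle this as in \cite{MPRparabolico} and \cite{delTeso_Rossi_2024}. The infima over $c$ can be restricted to a countable dense subset of $[m,M]$, or taken as monotone limits, since the balls are nested in $c$. Points can be selected from a countable partition of the ball, using that $\ue(\cdot,t)$ is bounded and measurable (Theorem \ref{thm-DPPI}(a)). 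Once Steps 1 and 2 give $\ue=u^\epsilon_I=u^\epsilon_{II}$, the convergence statement follows directly from Theorem \ref{convergence}.
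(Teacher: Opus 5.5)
Your proposal is correct and follows the paper's proof: near-optimal strategies read off from the DPP, the sub/supermartingales $\ue(x_k,t_k)\mp\eta 2^{-k}$, optional stopping (which you rightly justify by the deterministic bound on the number of rounds, a point the paper leaves implicit), the trivial inequality $u^\epsilon_I\le u^\epsilon_{II}$, and then Theorem~\ref{convergence}. Your measurability discussion goes beyond the paper's proof, which does not address it. One caution: restricting $c$ to a countable dense subset of $[m,M]$ is not automatic, because for discontinuous $\ue(\cdot,t)$ the map $c\mapsto\alpha\sup_{B_{\epsilon^2c^{1-\alpha}}(x)}\ue(\cdot,t)+(1-\alpha)M_{\epsilon c^{-\alpha/2}}[\ue](x,t)$ need not be upper semicontinuous.
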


	\begin{proof}
		\textit{Step 1:} First, we prove that $\ue \leq u^\epsilon_I$. 
		Take $\eta>0$ arbitrary. Given any strategy for the second player $S_{II}$, we choose a strategy $S_I^*$ for the first player starting from the point $(x_k,t_k)$ following the solution of the DPP~\eqref{DPP} as follows.
		
		On the one hand, if the first fair coin lands heads, Player~$I$ decides to remain at the same position if
		\[
		\ue(x_k,t_{k+1})>
		\inf_{c\in[m,M]}\left\{
		\alpha \sup_{B_{\epsilon^2 c^{1-\alpha}}(x_k)} \ue(\cdot,t_{k+1})
		+
		(1-\alpha) M_{\epsilon c^{-\alpha/2}}[\ue](x_k,t_{k+1})
		\right\}.
		\]
		Otherwise, Player~$II$ chooses any constant $c\in[m(\epsilon),M(\epsilon)]$ and Player~$I$ chooses the next position as follows. If the second biased coin lands heads, Player~$I$ chooses
		\begin{equation*}
			x_{k+1}^*\in B_{\epsilon^2 c^{1-\alpha}}(x_k)
			\quad \mbox{such that }\;
			\ue(x_{k+1}^*,t_{k+1})
			\geq
			\sup_{B_{\epsilon^2 c^{1-\alpha}}(x_k)}
			\ue(\cdot,t_{k+1})
			-\frac{\eta}{2^{k+1}}.
		\end{equation*}
		
		If the second biased coin lands tails, a tug-of-war with noise is played and, in this case Player~$I$ chooses
		\begin{equation} \label{ggg}
			\widehat{x}_{k+1}\in B_{\gamma \epsilon c^{-\alpha/2}}(x_k)
			\quad \mbox{such that }\;
			\ue(\widehat{x}_{k+1},t_{k+1})
			\geq
			\sup_{B_{\gamma \epsilon c^{-\alpha/2}}(x_k)}
			\ue(\cdot,t_{k+1})
			-
			\frac{\eta}{2^{k+1}}.
		\end{equation}
		
		On the other hand, if the first fair coin lands tails, and the second player chooses to play, Player~$I$ selects $c^*\in[m(\epsilon),M(\epsilon)]$ such that
		\begin{align*}
			\alpha\inf_{B_{\epsilon^2 (c^*)^{1-\alpha}}(x_k)}&\ue(\cdot,t_{k+1})
			+(1-\alpha)M_{\epsilon (c^*)^{-\alpha/2}}[\ue](x_k,t_{k+1})\\
			&\geq
			\sup_{c \in [m,M]}\left\{
			\alpha \inf_{B_{\epsilon^2 c^{1-\alpha}}(x_k)}\ue(\cdot,t_{k+1})
			+(1-\alpha)M_{\epsilon c^{-\alpha/2}}[\ue](x_k,t_{k+1})
			\right\}
			-\frac{\eta}{2^{k+1}},
		\end{align*}
		and next, in case it becomes necessary as a point for the tug-of-war with noise, a point such that \eqref{ggg} holds. 
		
		Fix the strategies $S_I^*$ and any $S_{II}$ and consider the sequence of random variables
		\[
		M_k=\ue(x_k,t_k)-\frac{\eta}{2^{k}}.
		\]
		We have that $(M_k)_{k\geq 0}$ is a submartingale, in fact,
		\begin{align*}
			&\mathbb{E}^{(x_0,t_0)}_{S_I^*, S_{II}}[M_{k+1}\mid (x_k,t_k)]
			=\frac{1}{2}\max\Bigg[
			\ue(x_k,t_{k+1})-\frac{\eta}{2^{k+1}}, \,
			\alpha \ue(x_{k+1}^*,t_{k+1})
			\\
			&\qquad \qquad \qquad \quad + (1-\alpha)\left(
			\frac{\beta}{2}\ue(\widehat{x}_{k+1},t_{k+1})
			+ \frac{\beta}{2}\ue(x_{k+1}^{II},t_{k+1})
			\right)
			+ (1-\alpha)(1-\beta)
			\fint_{B_{\gamma \epsilon c^{-\alpha/2}}(x_k)}
			\ue(y,t_{k+1})\,dy
			\Bigg]
			\\
			&\qquad\qquad \qquad \qquad  + \frac{1}{2}\min\Bigg[
			\ue(x_k,t_{k+1})-\frac{\eta}{2^{k+1}}, \,
			\alpha\inf_{B_{\epsilon^2 (c^*)^{1-\alpha}}(x_k)}\ue(\cdot,t_{k+1})
			+(1-\alpha)M_{\epsilon (c^*)^{-\alpha/2}}[\ue](x_k,t_{k+1})
			\Bigg]
			\\
			&\geq
			\A_\epsilon[\ue](x_k,t_k-\epsilon^2/2)
			-2\frac{\eta}{2^{k+2}}
			-\frac{\eta}{2^{k+2}}\left(\alpha+(1-\alpha)\beta\right)
			\\
			&\geq
			\ue(x_k,t_k)-\frac{\eta}{2^k}
			= M_k.
		\end{align*}
		
		Once we have that $(M_k)_{k\geq 0}$ is a submartingale, we apply the Optional Stopping Theorem 
		(cf.\cite{Williams}) in order to obtain
		\[
		\mathbb{E}^{(x_0,t_0)}_{S^*_I,S_{II}}
		[M_{\tau}]
		\geq
		M_0
		=
		\ue(x_0,t_0)-\eta.
		\]
		
		Since $S_{II}$ is any possible strategy, we have
		\[
		\inf_{S_{II}} \mathbb{E}^{(x_0,t_0)}_{S^*_I,S_{II}}
		[\Phi(x_\tau,t_\tau)] = \inf_{S_{II}} \mathbb{E}^{(x_0,t_0)}_{S^*_I,S_{II}}
		[\ue (x_\tau,t_\tau)]\geq \inf_{S_{II}} \mathbb{E}^{(x_0,t_0)}_{S^*_I,S_{II}}
		[M_{\tau}]
		\geq
		\ue(x_0,t_0)-\eta,
		\]
		and then, we conclude that
		\begin{align*}
			\ue(x_0,t_0)-\eta
			\leq
			\inf_{S_{II}}
			\mathbb{E}^{(x_0,t_0)}_{S_I^*, S_{II}}[\Phi(x_\tau,t_\tau)]
			\leq
			\sup_{S_I}
			\left(
			\inf_{S_{II}}
			\mathbb{E}^{(x_0,t_0)}_{S_I, S_{II}}[\Phi(x_\tau,t_\tau)]
			\right)
			=
			u_I^\epsilon(x_0,t_0).
		\end{align*}
		The first step is completed by the arbitrariness of $\eta$.
		
		\medskip
		\textit{Step 2:} We show that $u_{II}^\epsilon \leq \ue$.  
		The proof is analogous to Step~1. Let $\eta>0$ be arbitrary and, given any possible strategy $S_I$ for Player~$I$, we choose a strategy $S^*_{II}$ for the second player as follows.
		
		If the first fair coin lands heads and Player~$I$ decides to play, Player~$II$ chooses $c^*\in[m(\epsilon),M(\epsilon)]$ such that
		\begin{align*}
			\alpha\sup_{B_{\epsilon^2 (c^*)^{1-\alpha}}(x_k)}&\ue(\cdot,t_{k+1})
			+(1-\alpha)M_{\epsilon (c^*)^{-\alpha/2}}[\ue](x_k,t_{k+1})\\
			&\leq
			\inf_{c \in [m,M]}\left\{
			\alpha \sup_{B_{\epsilon^2 c^{1-\alpha}}(x_k)}\ue(\cdot,t_{k+1})
			+(1-\alpha)M_{\epsilon c^{-\alpha/2}}[\ue](x_k,t_{k+1})
			\right\}
			+\frac{\eta}{2^{k+1}}.
		\end{align*}
		
		On the other hand, if the first fair coin lands tails, Player~$II$ decides to remain at the same position if
		\[
		\ue(x_k,t_{k+1})<
		\sup_{c\in[m,M]}\left\{
		\alpha \inf_{B_{\epsilon^2 c^{1-\alpha}}(x_k)} \ue(\cdot,t_{k+1})
		+
		(1-\alpha) M_{\epsilon c^{-\alpha/2}}[\ue](x_k,t_{k+1})
		\right\}.
		\]
		
		Otherwise, Player~$I$ chooses any constant $c\in[m(\epsilon),M(\epsilon)]$ and Player~$II$ chooses the next position as follows. If the second biased coin lands heads,
		\begin{equation*}
			x_{k+1}^*\in B_{\epsilon^2 c^{1-\alpha}}(x_k)
			\quad \mbox{such that }\;
			\ue(x_{k+1}^*,t_{k+1})
			\leq
			\inf_{B_{\epsilon^2 c^{1-\alpha}}(x_k)}
			\ue(\cdot,t_{k+1})
			+\frac{\eta}{2^{k+1}}.
		\end{equation*}
		If the second biased coin lands tails, a tug-of-war with noise is played and Player~$II$ aims for
		\begin{equation*}
			\widehat{x}_{k+1}\in B_{\gamma \epsilon c^{-\alpha/2}}(x_k)
			\quad \mbox{such that }\;
			\ue(\widehat{x}_{k+1},t_{k+1})
			\leq
			\inf_{B_{\gamma \epsilon c^{-\alpha/2}}(x_k)}
			\ue(\cdot,t_{k+1})
			+
			\frac{\eta}{2^{k+1}}.
		\end{equation*}
		
		Fix the strategies $S_I$ and $S^*_{II}$ and define the random variables
		\[
		M_k=\ue(x_k,t_k)+\frac{\eta}{2^{k}}.
		\]
		As in Step~1, we show that $(M_k)_{k\geq 0}$ is a supermartingale, and applying again the Optimal Stopping Theorem, we obtain 
%
		\[
		\mathbb{E}^{(x_0,t_0)}_{S_I,S_{II}^*}[M_{\tau}]\leq
		M_0= \ue(x_0,t_0)+\eta,
		\]
		and hence
		\begin{align*}
			\ue(x_0,t_0)+\eta
			\geq
			\sup_{S_{I}}
			\mathbb{E}^{(x_0,t_0)}_{S_I, S_{II}^*}[\Phi(x_\tau,t_\tau)]\geq
			\inf_{S_{II}}
			\left(
			\sup_{S_{I}}
			\mathbb{E}^{(x_0,t_0)}_{S_I, S_{II}}[\Phi(x_\tau,t_\tau)]
			\right)
			=
			u_{II}^\epsilon(x_0,t_0).
		\end{align*}
		Since $\eta$ is arbitrary we have obtained 
		$
		\ue(x_0,t_0)
			\geq u_{II}^\epsilon(x_0,t_0)$. 
			
			Now, we observe that we trivially have $
		u_{I}^\epsilon (x_0,t_0)
			\leq u_{II}^\epsilon(x_0,t_0)$ and hence we conclude that
			$$
			\ue(x_0,t_0)
			=  u_{I}^\epsilon(x_0,t_0) = u_{II}^\epsilon(x_0,t_0),
			$$
			proving that the game has a value that coincides with the unique solution to the DPP \eqref{DPP-main-1}.
		
		The proof is finished using that the solution to the DPP 
		\eqref{DPP-main-1} converges as $\epsilon \to 0$. In fact, from our results 
		in Section \ref{sec4} it holds that $\ue \to u$ uniformly as $\epsilon\to 0^+$ (Theorem \ref{convergence}) where $u$ is the unique viscosity solution of the parabolic problem \eqref{problem-intro-1}.
	\end{proof}

	
	\section{The problem in the whole space}\label{sec5}
	
	We now present an analogous result for the convergence of the Dynamic
	Programming Principle in the whole space. Let $\epsilon \in (0,1)$ and
	consider the following problem:
	\begin{equation}\label{DPPII}
		\left\{
		\begin{array}{cl}
			\ue(x,t+\epsilon^2/2)=\A_\epsilon[\ue](x,t), & \mbox{in } \Rd \times (-\epsilon^2/2,\infty),\\[4pt]
			\ue(x,0)=u_0(x), & \mbox{in } \Rd\times (-\epsilon^2/2,0].
		\end{array}
		\right.
	\end{equation}
	Here $\A_\epsilon$ is given by~\eqref{operator-A}, and the initial datum
	$u_0 \in C_b^\lambda(\Rd)$ for some $\lambda>0$. Equivalently, we may write the problem as follows
	\begin{equation*}
		\left\{
		\begin{array}{cl}
			\ue(x,t)=\A_\epsilon[\ue](x,t-\epsilon^2/2), & \mbox{in } \Rd \times (0,\infty),\\[4pt]
			\ue(x,0)=u_0(x), & \mbox{in } \Rd\times (-\epsilon^2/2,0].
		\end{array}
		\right.
	\end{equation*}
	
	The main goal of this section is to show that the solutions to~\eqref{DPPII}
	converge, as $\epsilon \to 0^+$, to a viscosity solution of
	\begin{equation}\label{problemII}
		\left\{
		\begin{array}{cl}
			(\partial_t u  (x,t) )^{p-1}=\plap u(x,t),
			& \mbox{in } \Rd \times (0,\infty),\\[4pt]
			u(x,0)=u_0(x), & \mbox{in } \Rd.
		\end{array}
		\right.
	\end{equation}
	Note that, since uniqueness for solutions to \eqref{problemII} is not known as far as we are aware, the arguments 
	from the previous section cannot be repeated, and we must proceed in a different way. The strategy to prove 
	convergence is as follows: if the family $\{\ue\}_{\epsilon>0}$ is
	uniformly bounded and equicontinuous, then by the Arzelà--Ascoli theorem it
	admits a subsequence that converges to a function that will be a viscosity solution of
	\eqref{problemII}. This compactness argument proves convergence along subsequences $\epsilon_j \to 0+$.
	To obtain convergence to the whole family $\{\ue\}_{\epsilon>0}$ we need to assume an exponential decay
	for the initial datum. 
	
	\begin{rem}{\rm \label{rem-wholespace} \
			\begin{enumerate}[label=\roman*)] 
				\item All asymptotic expansions involving the operator $\A_\epsilon$ for
				$C^2$ and $C^3$ functions remain valid, since their proofs rely on local
				arguments.
				\item For each fixed $\epsilon>0$, existence and uniqueness of solutions to
				\eqref{DPPII} follow from the explicit Euler time-discretization scheme,
				as discussed in Section~\ref{sec4.1}. Moreover, the proofs of
				Theorem~\ref{comparison} and Lemma~\ref{contraction} can be repeated with
				minor modifications.
			\end{enumerate}
		}
	\end{rem}
	
	Using calculations analogous to those in Section~\ref{sec:heuristic}, we show how
	problem~\eqref{problemII} can be discretized. Let $\epsilon>0$, $\tau=\epsilon^2/2$
	and $p>2$. Then
	\[
	\frac{u(x,t+\tau)-u(x,t)}{\tau}=\Delta_{p,\epsilon}u(x,t):=\frac{\A_\epsilon[u](x,t)-u(x,t)}{\epsilon^2/2}.
	\]
	
	We now establish uniform boundedness and equicontinuity in space for solutions at a fixed time.

	\begin{thm}
		Let $d\geq 1$, $p>2$, and $\epsilon\in(0,1)$.
		Let $\ue$ be the solution of the DPP~\eqref{DPPII} with initial datum $u_0\in B(\Rd)$. Then for any $t>0$ and $y\in\Rd$
		\begin{equation}\label{eq5-1}
			\|\ue(\cdot+y,t)-\ue(\cdot,t)\|_{\LL^\infty(\Rd)}
			\leq \|u_0(\cdot+y)-u_0(\cdot)\|_{\LL^{\infty}(\Rd)}.
		\end{equation}
		In particular, if $u_0\in UC_b(\Rd)$, then $\ue(\cdot,t)\in UC_b(\Rd)$.
		Moreover, 
		\begin{equation}\label{eq5-2}
			\| \ue(\cdot,t)\|_{\LL^\infty(\Rd)}\leq \|u_0\|_{\LL^\infty(\Rd)},
		\end{equation}
		for every $t>0$. 
	\end{thm}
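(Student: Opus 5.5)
The plan is to exploit two structural properties of $\A_\epsilon$: it commutes with spatial translations, and it is nonexpansive in $\LL^\infty$. Combined with the explicit Euler structure of \eqref{DPPII}, both estimates should follow by induction on the time steps.

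Fix $y\in\Rd$ and set $v_\epsilon(x,t):=\ue(x+y,t)$. Since every ingredient of \eqref{operator-A} is translation invariant — the balls $B_{\epsilon^2c^{1-\alpha}}(x)$ and $B_{\gamma\epsilon c^{-\alpha/2}}(x)$, the averages, the sup/inf over $c\in[m,M]$, and the point value — we have $\A_\epsilon[\ue(\cdot+y,t)](x)=\A_\epsilon[\ue(\cdot,t)](x+y)$. Hence $v_\epsilon$ solves \eqref{DPPII} with initial datum $u_0(\cdot+y)$ on $\Rd\times(-\epsilon^2/2,0]$.

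Next, I will apply the whole-space version of Lemma \ref{contraction}, which Remark \ref{rem-wholespace} says carries over; equivalently, I can redo its one-step estimate. Fix $t_0\in(-\epsilon^2/2,0]$ and set $u^j=\ue(\cdot,t_0+j\epsilon^2/2)$ and $v^j=v_\epsilon(\cdot,t_0+j\epsilon^2/2)$. Lemma \ref{claim-1}, used as in the proof of Lemma \ref{contraction}, bounds $|\A_\epsilon[u^j](x)-\A_\epsilon[v^j](x)|$ by a convex combination (weights $\alpha$, $(1-\alpha)\beta$, $(1-\alpha)(1-\beta)$, together with the point value) of quantities each at most $\|u^j-v^j\|_{\LL^\infty}$. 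Thus $\|u^{j+1}-v^{j+1}\|_{\LL^\infty(\Rd)}\le\|u^j-v^j\|_{\LL^\infty(\Rd)}$, with no boundary term now. Iterating down to $j=0$ gives \eqref{eq5-1}, since every $t>0$ is of the form $t_0+j\epsilon^2/2$.

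For \eqref{eq5-2}, I will compare with constants. By Remark \ref{properties-A}, or because the coefficients of $\A_\epsilon$ sum to one, $\A_\epsilon[\xi]=\xi$ for any constant $\xi$. So $\pm\|u_0\|_{\LL^\infty}$ are solutions with initial data dominating or dominated by $u_0$, and monotonicity (Lemma \ref{claim-2}) applied step by step, i.e. Theorem \ref{comparison} in the whole space, yields $|\ue|\le\|u_0\|_{\LL^\infty}$. Alternatively, this follows from the contraction with $v\equiv0$.

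The uniform continuity statement follows directly from \eqref{eq5-1}: the modulus of continuity satisfies $\Lambda_{\ue(\cdot,t)}\le\Lambda_{u_0}$, and boundedness comes from \eqref{eq5-2}.

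The only delicate point is measurability and well-definedness, i.e. that $\A_\epsilon$ maps bounded measurable functions to bounded measurable functions on all of $\Rd$. I take this from the existence discussion in Section \ref{sec4.1}. I also note that because $u_0\in B(\Rd)$, the norms here are genuine suprema, so the pointwise bounds pass to suprema with no null sets to worry about.
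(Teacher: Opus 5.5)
Your proposal is correct and follows essentially the paper's proof: the paper also applies the whole-space version of Lemma~\ref{contraction} to $\ue$ and its translate $\ue(\cdot+y,\cdot)$ along the grid $t=t_0+j\epsilon^2/2$, and it obtains the sup bound by the same computation, which amounts to your comparison with the zero solution or with constants. The paper leaves two of your points implicit: the check that $\A_\epsilon$ commutes with translations, and the reading of the norms as genuine suprema. The second is in fact necessary, because $\A_\epsilon$ takes suprema and infima over balls, which see null sets, so for discontinuous $u_0\in B(\Rd)$ the estimate stated with essential suprema can fail.
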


	\begin{proof}
		We begin proving \eqref{eq5-1}. Fix $y\in\Rd$ and $t>0$. Then there exist $t_0\in(-\epsilon^2/2,0]$
		and $j\in \NN$ such that $t=t_0+j\epsilon^2/2$.
		Applying Lemma~\ref{contraction} with $u(x,t)=\ue(x,t)$ and
		$v(x,t)=\ue(x+y,t)$ yields~\eqref{eq5-1}.
		
		Since the supremum of the initial datum is bounded in $\Rd$, the same computation implies inequality \eqref{eq5-2} for all $t>0$.
	\end{proof}
	
	The next goal is to prove a time-regularity estimate. First, we establish this inequality for discretized times of the mesh.
	\begin{thm}
		Let $d\geq 1$, $p>2$, $\epsilon\in(0,1)$, and $\lambda\in(0,2)$.
		Let $\ue$ be the solution of the DPP~\eqref{DPPII} with initial datum $u_0\in C_b^\lambda(\Rd)$. Write $t_k=t_0+ \,k\tau$ with $t_0\in(\epsilon^2/2,0]$ and $\tau=\epsilon^2/2$, then for every
		$k,j\in\NN$,
		\[
		\|\ue(\cdot,t_{k+j})-\ue(\cdot,t_k)\|_{\LL^\infty(\Rd)}
		\leq C \|u_0\|_{C^\lambda(\Rd)}(t_{k+j}-t_k)^{\lambda/2}.
		\]
		Here $C>0$ is a constant depending only on $p$, $d$, and $\lambda$,
		but independent of $t$ and $\epsilon$.
	\end{thm}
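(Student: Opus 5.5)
The plan is to reduce the estimate to a bound at the initial time, and then obtain that bound by comparison with explicit barriers built from a mollification of $u_0$.

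\emph{Step 1 (reduction to $k=0$).} The DPP \eqref{DPPII} is invariant under time translations along the mesh. So $v(x,t):=\ue(x,t+t_k-t_0)$ solves \eqref{DPPII} with initial datum $\ue(\cdot,t_k)$ on the mesh. The contraction estimate of Lemma~\ref{contraction}, in its whole-space version (Remark~\ref{rem-wholespace}), compares $v$ with $\ue$ after $j$ steps and gives
\[
\|\ue(\cdot,t_{k+j})-\ue(\cdot,t_{j})\|_{\LL^\infty(\Rd)}\leq \|\ue(\cdot,t_k)-\ue(\cdot,t_0)\|_{\LL^\infty(\Rd)} .
\]
Iterating this kind of argument, with the roles of $k$ and $j$ exchanged, it suffices to prove
\[
\|\ue(\cdot,t_j)-u_0\|_{\LL^\infty(\Rd)}\le C\|u_0\|_{C^\lambda}(j\tau)^{\lambda/2}\qquad\text{for all } j .
\]
For $j\tau\ge1$ this follows at once from \eqref{eq5-2}, with $C=2$. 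So we may assume $j\tau<1$.

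\emph{Step 2 (barriers).} Let $u_0^\delta=u_0*\rho_\delta$ be a standard mollification. Then
\[
\|u_0-u_0^\delta\|_\infty\le C[u_0]_{C^\lambda}\delta^\lambda,\qquad \|\nabla u_0^\delta\|_\infty\le C\|u_0\|_{C^\lambda}\delta^{\lambda-1},\qquad \|D^2u_0^\delta\|_\infty\le C\|u_0\|_{C^\lambda}\delta^{\lambda-2}.
\]
When $\lambda\ge1$ the gradient bound is simply $C\|u_0\|_{C^\lambda}$. Set
\[
\wa(x,t)=u_0^\delta(x)+C\|u_0\|_{C^\lambda}\delta^\lambda+Kt .
\]
By Remark~\ref{properties-A}, $\wa$ is a supersolution of \eqref{DPPII} as soon as $K\ge \sup_x\bigl(\A_\epsilon[u_0^\delta](x)-u_0^\delta(x)\bigr)/\tau$. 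The comparison principle (Theorem~\ref{comparison}, whole-space version) then gives $\ue\le\wa$. The symmetric barrier $\wb$ gives the lower bound.

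To estimate $K$, I would not use the asymptotic expansions with their $\epsilon$-dependent error terms. Instead I would use directly the structure of $\A_\epsilon$, as in the proof of Theorem~\ref{exp-c3-grad}. The elementary bounds
\[
\sup_{B_r(x)}\ph-\ph(x)\le r\|\nabla\ph\|_\infty\qquad\text{and}\qquad |M_\rho[\ph](x)-\ph(x)|\le C\rho^2\|D^2\ph\|_\infty
\]
hold for every $\epsilon$. Inserting them and using \eqref{error-3} gives
\[
|Q_\epsilon[u_0^\delta]|\le \|\nabla u_0^\delta\|^\alpha\|D^2u_0^\delta\|^{1-\alpha}+\alpha m(\epsilon)^{1-\alpha}\|\nabla u_0^\delta\|+(1-\alpha)M(\epsilon)^{-\alpha}C\|D^2u_0^\delta\|.
\]
The leading term is $C\|u_0\|_{C^\lambda}\delta^{\lambda-2+\alpha}$, possibly with a power of $\|u_0\|_{C^\lambda}$ that I would normalise by scaling $u_0$ by a constant. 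Thus
\[
|\ue(\cdot,t_j)-u_0|\lesssim \delta^\lambda + j\tau\bigl(\delta^{\lambda-2+\alpha}+m^{1-\alpha}\delta^{\lambda-1}+M^{-\alpha}\delta^{\lambda-2}\bigr).
\]
Choosing $\delta=(j\tau)^{1/2}$, the main term becomes $(j\tau)^{\lambda/2+\alpha/2}\le (j\tau)^{\lambda/2}$, since $j\tau<1$.

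\emph{Main obstacle.} The hard part is the error terms when $j\tau$ is comparable to $\epsilon^2$. Then $\delta\sim\epsilon$, and $M(\epsilon)^{-\alpha}\delta^{\lambda-2}j\tau\sim\epsilon^{\lambda+2\alpha(1-1/p)}$, and similarly for the $m$ term. I would verify with the explicit exponents in \eqref{m-M} that these are $\lesssim \epsilon^\lambda\lesssim (j\tau)^{\lambda/2}$, because $j\ge1$ gives $j\tau\ge\epsilon^2/2$.

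If some exponent fails, the fallback is a direct one-step bound. Since $\A_\epsilon[\ph](x)$ is an average of values of $\ph$ on $B_{\gamma\epsilon m^{-\alpha/2}}(x)$, we have
\[
|\ue(\cdot,t_1)-u_0|\le C[u_0]_{C^\lambda}(\epsilon m^{-\alpha/2})^{\lambda}.
\]
I would use this to cover a bounded number of initial steps, and apply the barrier argument with $\delta=(j\tau)^{1/2}\gg\epsilon$ afterwards. The constants $C$ depend only on $p$, $d$ and $\lambda$, through the mollifier and $k_3$.
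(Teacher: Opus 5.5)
Your argument is correct and is essentially the paper's proof. The key input is the same one-step bound $\sup_x|\A_\epsilon[\ph](x)-\ph(x)|\le C\tau\|\ph\|_{C^2}$ from Lemma~\ref{infimum}, applied to a mollification $u_0^\delta$ with $\delta=(t_{k+j}-t_k)^{1/2}$. Your time-shift reduction and barriers $u_0^\delta\pm\bigl(C\|u_0\|_{C^\lambda}\delta^\lambda+K(t-t_0)\bigr)$ (with $K(t-t_0)$ in place of your $Kt$, so the barrier sits above $u_0$ at the initial mesh time) simply repackage the paper's telescoping and $\LL^\infty$-contraction against the mollified-data solution; your separate treatment of $j\tau\ge 1$ via \eqref{eq5-2} is a small bonus, since the paper's last step tacitly needs $\delta<1$.

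Your ``main obstacle'' is not one: $m(\epsilon)\le 1\le M(\epsilon)$ for $\epsilon<1$, so the error terms are at most $Cj\tau\bigl(\|\gr u_0^\delta\|_\infty+\|D^2u_0^\delta\|_\infty\bigr)\le C\|u_0\|_{C^\lambda}(j\tau)^{\lambda/2}$ for every $j$ with $j\tau<1$. That is fortunate, because the fallback would fail: $\epsilon\, m(\epsilon)^{-\alpha/2}=\epsilon^{\frac{2p-2}{3p-4}}\gg\epsilon$, so that one-step bound is far larger than $\tau^{\lambda/2}$.
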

	\begin{proof}
		Assume without loss of generality that $t_j=j\tau$ with $\tau=\epsilon^2/2$, and define $u^j_\epsilon(x)=\ue(x,t_j)$ for all $j\in \NN$. We split the proof into several steps.
		
		\medskip
		\textit{Step 1. Assume temporarily that the initial datum belongs to
			$C_b^2(\Rd)$. Then, we will see that there exists a constant $C>0$, depending only on $p$
			and $d$, such that} 
		$$\|u^1_\epsilon-u_0\|_{\LL^\infty(\Rd)}\leq C \tau \|u_0\|_{C^2(\Rd)}.$$
		
		Using the discretization of the problem, for any $x\in\Rd$ we have
		\begin{align*}
			u^1_\epsilon(x)-u_0(x)
			&= \frac{\tau}{\epsilon^2}
			\max\Bigg[\inf_{c\in[m,M]}
			\Big\{\alpha \big(\sup_{B_{\epsilon^2 c^{1-\alpha}}(x)} u_0-u_0(x)\big)
			+ (1-\alpha)\big(M_{\epsilon c^{-\alpha/2}}[u_0](x)-u_0(x)\big)
			\Big\},\,0\Bigg]\\
			&\quad
			-\frac{\tau}{\epsilon^2}
			\max\Bigg[\inf_{c\in[m,M]}
			\Big\{\alpha \big(u_0(x)-\inf_{B_{\epsilon^2 c^{1-\alpha}}(x)} u_0\big)
			-(1-\alpha)\big(M_{\epsilon c^{-\alpha/2}}[u_0](x)-u_0(x)\big)
			\Big\},\,0\Bigg].
		\end{align*}
		Hence it suffices to estimate each term via Taylor expansions.
		Since $u_0$ is continuous, it attains its maximum and minimum on every closed ball.
		Then,
		\[
		\alpha \left|\sup_{B_{\epsilon^2 c^{1-\alpha}}(x)} u_0-u_0(x)\right|
		\leq \alpha \|\nabla u_0\|_{\LL^\infty(\Rd)}\epsilon^2 c^{1-\alpha},
		\]
				and
				\[\alpha \left|u_0(x)-\inf_{B_{\epsilon^2 c^{1-\alpha}}(x)}u_0\right|
				\leq \alpha \|\nabla u_0\|_{\LL^\infty(\Rd)}\epsilon^2 c^{1-\alpha}.\]
		Moreover, we estimate the remaining terms as follows
		\[
		(1-\alpha)\frac{\beta}{2}
		\left|\sup_{B_{\gamma \epsilon c^{-\alpha/2}}(x)}u_0
		+\inf_{B_{\gamma \epsilon c^{-\alpha/2}}(x)}u_0-2u_0(x)\right|
		\leq (1-\alpha)\frac{\beta}{2}\|D^2u_0\|_{\LL^\infty(\Rd)}
		\gamma^2 \epsilon^2 c^{-\alpha},
		\]
		and
		\[
		(1-\alpha)(1-\beta)
		\left|\fint_{B_{\gamma \epsilon c^{-\alpha/2}}(x)}u_0(y)\dy-u_0(x)\right|
		\leq (1-\alpha)(1-\beta)\|D^2 u_0\|_{\LL^\infty(\Rd)}
		\gamma^2 \epsilon^2 c^{-\alpha}.
		\]
		
		Combining the previous estimates, we get
		\begin{align*}
			\left|u^1_\epsilon(x)-u_0(x)\right|&\leq \frac{\tau}{\epsilon^2} \inf_{c \in [m,M]}\Big\{\alpha \epsilon^2 c^{1-\alpha}\|\nabla u_0\|_{\LL^\infty(\Rd)}+(1-\alpha)2(p+d)\epsilon^2 c^{-\alpha}\|D^2 u_0\|_{\LL^\infty(\Rd)} \Big\}\\
			&\leq 2\tau (p+d)\inf_{c \in [m,M]}\Big\{\alpha c^{1-\alpha} \|\nabla u_0\|_{\LL^\infty(\Rd)}+(1-\alpha)c^{-\alpha}\|D^2 u_0\|_{\LL^\infty(\Rd)}\Big\}\\
			&\leq 2\tau (p+d)\Bigg(\|\nabla u_0\|_{\LL^\infty(\Rd)}^{\frac{p-2}{p-1}}\|D^2 u_0\|_{\LL^\infty(\Rd)}^{\frac{1}{p-1}}\\
			&\qquad \qquad\qquad+ \frac{p-2}{p-1}\|\nabla u_0\|_{\LL^\infty(\Rd)}\epsilon^{\frac{2}{3p-4}}+\frac{1}{p-1}\|D^2 u_0\|_{\LL^\infty(\Rd)} \epsilon^{\frac{2(p-2)}{p}}\Bigg)\\
			&\leq 4 (p+d)\tau \|u_0\|_{C^2(\Rd)}.
		\end{align*}
		Taking the supremum over $x\in\Rd$ completes Step~1.
		
		\medskip
		\textit{Step 2. For every $k\in\NN$, we will show that} $$\|u^{k+1}_\epsilon-u^k_\epsilon\|_{\LL^\infty(\Rd)}
		\leq \|u^1_\epsilon-u_0\|_{\LL^\infty(\Rd)}.$$
		
		Fix $x\in\Rd$, then
		\begin{align*}
			\left|u^{k+1}_\epsilon(x)-u^{k}_\epsilon(x)\right|
			&=
			\left|\A_\epsilon[u^k_\epsilon](x)-\A_\epsilon[\ue^{k-1}](x)\right|\\
			&\le
			\frac{1}{2}
			\Bigg|
			\max\Bigg[
			\sup_{c\in[m,M]} \Bigg\{
			\alpha\Bigl(
			\sup_{B_{\epsilon^2 c^{1-\alpha}}(x)} \ue^k
			- \sup_{B_{\epsilon^2 c^{1-\alpha}}(x)} \ue^{k-1}
			\Bigr)\\
			&\qquad\qquad
			+ (1-\alpha)\Bigl(
			M_{\epsilon c^{-\alpha/2}}[\ue^k]
			- M_{\epsilon c^{-\alpha/2}}[\ue^{k-1}]
			\Bigr)
			\Bigg\},\,\ue^k-\ue^{k-1}
			\Bigg]
			\Bigg|\\
			&\quad
			+ \frac{1}{2}
			\Bigg|
			\max\Bigg[
			\sup_{c\in[m,M]} \Bigg\{
			\alpha\Bigl(
			\sup_{B_{\epsilon^2 c^{1-\alpha}}(x)} (-\ue^k)
			- \sup_{B_{\epsilon^2 c^{1-\alpha}}(x)} (-\ue^{k-1})
			\Bigr)\\
			&\qquad\qquad
			+ (1-\alpha)\Bigl(
			M_{\epsilon c^{-\alpha/2}}[-\ue^k]
			- M_{\epsilon c^{-\alpha/2}}[-\ue^{k-1}]
			\Bigr)
			\Bigg\},\,\ue^k-\ue^{k-1}
			\Bigg]
			\Bigg|,
		\end{align*}
		and we bound from above each term with $\|\ue^k-\ue^{k-1}\|_{\LL^\infty(\Rd)}$ as in Lemma \ref{contraction}. Taking the supremum over $x$ and iterating the obtained inequality yields the claim.
		
		\medskip
		\textit{Step 3. For every $k,j\in\NN$, the following inequality holds}
		\[
		\|\ue^{k+j}-\ue^k\|_{\LL^\infty(\Rd)}
		\leq j \|u^1_\epsilon-u_0\|_{\LL^\infty(\Rd)}.
		\]
		
		Indeed, $$\|\ue^{k+j}-\ue^k\|_{\LL^\infty(\Rd)}
		\leq \sum_{l=1}^{j}
		\|\ue^{k+l}-\ue^{k+l-1}\|_{\LL^\infty(\Rd)}
		\leq j \|\ue^1-u_0\|_{\LL^\infty(\Rd)}.$$
		
		\medskip
		\textit{Step 4. Assume $u_0\in C_b^\lambda(\Rd)$. Then we claim that there exists a
			constant $C>0$, depending only on $p$ and $d$, such that for all
			$k,j\in\NN$,}
		\[
		\|\ue^{k+j}-\ue^k\|_{\LL^\infty(\Rd)}
		\leq C \|u_0\|_{C^\lambda(\Rd)}(t_{k+j}-t_k)^{\lambda/2}.
		\]
		
		Let $\eta\in C_c^\infty(\Rd)$ be a standard mollifier with
		$\supp (\eta)\subset B_1(0)$ and $\int_{\Rd}\eta(x)\dx=1$.
		For $\delta>0$ define
		\[
		\eta_\delta(x)=\delta^{-d}\eta(x/\delta),
		\qquad\mbox{and}\qquad
		u_{0,\delta}=u_0*\eta_\delta\in C_b^\infty(\Rd).
		\]
		Then, a standard calculation shows that
		\[
		\|u_{0,\delta}-u_0\|_{\LL^\infty(\Rd)}
		\leq \delta^\lambda \|u_0\|_{C^\lambda(\Rd)},
		\qquad\mbox{and}\qquad
		\|D^k u_{0,\delta}\|_{\LL^\infty(\Rd)}
		\leq C_d \delta^{\lambda-k}\|u_0\|_{C^\lambda(\Rd)}.
		\]
		
		Let $u_{\epsilon,\delta}^j$ denote the solution of~\eqref{DPPII} with initial
		datum $u_{0,\delta}$ at time $t_j=j\tau$ as before. Then, using the estimates of the previous steps,
		\begin{align*}
			\|\ue^{k+j}-\ue^k\|_{\LL^\infty(\Rd)}&\leq \|\ue^{k+j}-u_{\epsilon,\delta}^{k+j}\|_{\LL^\infty(\Rd)}+ \|u_{\epsilon,\delta}^k-\ue^k\|_{\LL^\infty(\Rd)}+\|u_{\epsilon,\delta}^{k+j}-u_{\epsilon,\delta}^k\|_{\LL^\infty(\Rd)}\\
			&\leq 2 \|u_{0,\delta}-u_0\|_{\LL^\infty(\Rd)} + \tau j \sup_{\epsilon\in(0,1)} \| \Delta_{p,\epsilon} u_{0,\delta}\|_{\LL^\infty(\Rd)}\\
			&\leq 2 \delta^\lambda \|u_0\|_{C^\lambda(\Rd)} + 8 (p+d) t_j \|u_{0,\delta}\|_{C^2(\Rd)}\\
			&\leq C_{d,p}  \|u_0\|_{C^\lambda}\left(\delta^\lambda + t_j \delta^{\lambda-2}\right).
		\end{align*}
		
		Choosing $\delta=t_j^{1/2}$ gives us $\|\ue^{k+j}-\ue^k\|_{\LL^\infty(\Rd)}\leq C_{d,p} t_j^{\lambda/2}\|u_0\|_{C^\lambda(\Rd)}.$
	\end{proof}

    Note that, here, when we write $C^\lambda(\Rd)$ with $\lambda\in(0,2)$, we mean $C^{0,\lambda}(\Rd)$ if $\lambda\in(0,1)$ and $C^{1,\lambda}(\Rd)$ if $\lambda\in[1,2)$. The previous result can be easily extended if the initial datum has a different modulus of continuity $\Lambda_{u_0}(|y|)$ instead of $|y|^{\lambda}$. Only Step~4 changes when we use the mollifier. In this case, choosing $\delta=t_j^{1/2}$, we obtain
				$$
				\|\ue^{k+j}-\ue^k\|_{\LL^\infty(\Rd)}\leq C_{d,p}\,\Lambda_{u_0}(t_j^{1/2}).
				$$
	
				
	
	In summary, we have shown that, given a time grid $t_j=t_0+j\tau$ with $t_0\in(-\tau,0]$, the family of DPP solutions is equicontinuous at the grid times. We now aim to extend this result to infer equicontinuity at arbitrary times. To this end, we introduce the following linear interpolation
	\begin{equation}\label{interpolation-lin}
		U_\epsilon(x,t)=\frac{t_{j+1}-t}{\tau}\ue^j(x)+\frac{t-t_j}{\tau}\ue^{j+1}(x)\qquad \mbox{for all }t\in [t_j,t_{j+1}).
	\end{equation}
	
	Now, we show that this family of functions are equicontinuous.
	
	\begin{thm}
		Let $d\geq 1$, $p>2$, $\epsilon\in(0,1)$, $\tau=\epsilon^2/2$ and $\lambda\in(0,2)$.
		Let $\ue$ be the solution of the DPP~\eqref{DPPII} with initial datum $u_0\in C_b^\lambda(\Rd)$. Assume that~\eqref{m-M} holds, and define $U_\epsilon(x,t)$ as in~\eqref{interpolation-lin}. Then, there exists a positive constant $C$, depending only on $p$ and $d$, such that for all $t,s>0$,
		\begin{equation}\label{equi-inter}
			\|U_\epsilon(\cdot,t)-U_\epsilon(\cdot,s)\|_{\LL^\infty(\Rd)}
			\leq C \|u_0\|_{C^\lambda(\Rd)} |t-s|^{\lambda/2}.
		\end{equation}
	\end{thm}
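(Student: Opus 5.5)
The estimate \eqref{equi-inter} follows from the grid-time estimate of the previous theorem, since $U_\epsilon$ is piecewise linear in time between consecutive grid values. Write $\tau=\epsilon^2/2$, $t_j=j\tau$, and let $L:=C_0\|u_0\|_{C^\lambda(\Rd)}$, where $C_0$ is the constant from that theorem. Then
\[
\|\ue^{k+j}-\ue^k\|_{\LL^\infty(\Rd)}\le L\,(j\tau)^{\lambda/2}\qquad\text{for all } k,j\in\NN .
\]
Take $0<s<t$ and split into two cases according to whether $|t-s|$ is smaller or larger than the mesh size $\tau$.

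\emph{Case 1: $|t-s|\le\tau$.} Both times lie either in a single interval $[t_j,t_{j+1})$ or in two adjacent ones.

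In a single interval, \eqref{interpolation-lin} gives
\[
U_\epsilon(\cdot,t)-U_\epsilon(\cdot,s)=\frac{t-s}{\tau}\,\bigl(\ue^{j+1}-\ue^j\bigr),
\]
so
\[
\|U_\epsilon(\cdot,t)-U_\epsilon(\cdot,s)\|_{\LL^\infty(\Rd)}\le \frac{t-s}{\tau}\,L\,\tau^{\lambda/2}
= L\,(t-s)^{\lambda/2}\Bigl(\frac{t-s}{\tau}\Bigr)^{1-\lambda/2}\le L\,(t-s)^{\lambda/2}.
\]
The last inequality uses $\lambda\le 2$ and $(t-s)/\tau\le 1$. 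This short computation is really the heart of the matter: linear interpolation of a sequence with H\"older-$\lambda/2$ increments stays H\"older at scales below the mesh.

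In two adjacent intervals, with $s<t_{j+1}\le t$, insert the grid point $t_{j+1}$ and apply the single-interval bound on each side. Since $a^{\lambda/2}+b^{\lambda/2}\le 2(a+b)^{\lambda/2}$, this gives the bound with constant $2L$.

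\emph{Case 2: $|t-s|>\tau$.} Let $t_k$ be the largest grid point $\le s$ and $t_{k'}$ the largest grid point $\le t$. Use the triangle inequality through $U_\epsilon(\cdot,t_k)=\ue^k$ and $U_\epsilon(\cdot,t_{k'})=\ue^{k'}$:

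\begin{itemize}
\item The two end pieces, $s$ to $t_k$ and $t$ to $t_{k'}$, are each within one cell. By Case 1 each is bounded by $L\tau^{\lambda/2}\le L|t-s|^{\lambda/2}$.
\item The middle piece is bounded by the grid estimate, $L(t_{k'}-t_k)^{\lambda/2}$. Since $t_{k'}-t_k\le t-s+\tau\le 2|t-s|$, this is at most $L\,2^{\lambda/2}|t-s|^{\lambda/2}$.
\end{itemize}

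Summing the three pieces gives \eqref{equi-inter} with $C=(2+2^{\lambda/2})C_0$, which depends only on $p$, $d$ (and $\lambda$), uniformly in $\epsilon$.

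Two bookkeeping points remain, and I expect them to be the only real obstacle.

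First, the grid offset. The previous theorem was stated for a general mesh offset $t_0$, but was proved with $t_0=0$. Because $U_\epsilon$ is built on the mesh $t_0=0$, I use it only on that mesh.

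Second, the endpoint $\lambda=2$. The grid estimate was proved for $\lambda\in(0,2)$. For $\lambda=2$ I would bypass the mollification step entirely: Steps 1–3 of that proof give $\|\ue^{k+j}-\ue^k\|\le C j\tau\|u_0\|_{C^2(\Rd)}$ directly, and the argument above then goes through verbatim with $\lambda/2=1$.
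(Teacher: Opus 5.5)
Your argument is correct and is essentially the paper's: the within-cell bound $\frac{t-s}{\tau}\tau^{\lambda/2}=(t-s)^{\lambda/2}\bigl(\frac{t-s}{\tau}\bigr)^{1-\lambda/2}\le (t-s)^{\lambda/2}$, followed by a triangle inequality through grid points and the grid-time H\"older estimate. The only difference is bookkeeping. The paper anchors at the inner grid points, namely the right endpoint of the cell containing the smaller time and the left endpoint of the cell containing the larger one, so each of the three pieces is automatically bounded by $|t-s|$ and no split on whether $|t-s|\le\tau$ is needed, which gives the constant $3C$ directly.
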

	
	\begin{proof}
		We distinguish two cases.
		
		\textit{Case 1: $t$ and $s$ belong to the same interval, that is, $t_j \le t < s < t_{j+1}$ for some $j\in\NN$.}
		Let $x\in\Rd$. By the definition of $U_\epsilon$,
		\[
		U_\epsilon(x,t)-U_\epsilon(x,s)
		=\frac{s-t}{\tau}\big(\ue^j(x)-\ue^{j+1}(x)\big).
		\]
		Hence,
		\begin{equation*}
			|U_\epsilon(x,t)-U_\epsilon(x,s)|
			\le \frac{s-t}{\tau}\, C \|u_0\|_{C^\lambda(\Rd)}\,\tau^{\lambda/2} = C \|u_0\|_{C^\lambda(\Rd)}
			|s-t|^{\lambda/2}\left(\frac{s-t}{\tau}\right)^{1-\lambda/2}.
		\end{equation*}
		Since $t,s\in[t_j,t_{j+1})$ implies $|t-s|<\tau$, we obtain \eqref{equi-inter} after taking supremum over $x\in\Rd$.
		
		\textit{Case 2: $t$ and $s$ belong to different intervals, namely $t\in[t_j,t_{j+1})$ and $s\in[t_k,t_{k+1})$ with $j<k$.}
		Let $x\in\Rd$. Then, adding and subtracting $U_\epsilon(x,t_{j+1})$ and $U_\epsilon(x,t_{k})$ we obtain
		\begin{align*}
			|U_\epsilon(x,t)-U_\epsilon(x,s)|
			&\le C \|u_0\|_{C^\lambda(\Rd)}
			\big(|t-t_{j+1}|^{\lambda/2}
			+ |t_{j+1}-t_k|^{\lambda/2}
			+ |t_k-s|^{\lambda/2}\big) \\
			&\le 3C \|u_0\|_{C^\lambda(\Rd)} |t-s|^{\lambda/2}.
		\end{align*}
		Taking supremum over $x\in\Rd$ concludes the proof.
	\end{proof}
	
	In conclusion, the family of functions $\left\{U_\epsilon\right\}_{\epsilon>0}$ is uniformly bounded and equicontinuous. Therefore, by the Arzelà--Ascoli theorem, there exists a subsequence that converges locally uniformly to a continuous limit $u$ as $\epsilon \to 0^+$. Before verifying that this limit function is a viscosity solution of problem \eqref{problemII}, we show that the left-constant interpolation
	\begin{equation}\label{interpolation-const}
		V_\epsilon(x,t)=\ue^j(x)\qquad \mbox{if }t\in [t_j,t_{j+1}),
	\end{equation}
	converges locally uniformly to the same function $u$ as $\epsilon \to 0^+$.
	
	We introduce this new interpolation because we do not know whether equation~\eqref{DPPII} is satisfied by $U_\epsilon$ at times outside the temporal grid $\{t_j\}_{j\in\NN}$. However, it is satisfied by $V_\epsilon$. The latter interpolation is precisely the one used to construct the solutions $\ue$ of~\eqref{DPPII} via the explicit Euler method on the grid $\{t_j\}$. Hence, we conclude that $\ue$, the unique solution to the DPP \eqref{DPPII}, also converges to the same limit $u$, that is, 
	$$\ue\to u \qquad \mbox{locally uniformly in }\Rd\times [0,\infty)\quad \mbox{as }\epsilon \to 0^+.$$
	
	\begin{lem}
		Let $d\geq 1$, $p>2$, $\epsilon\in(0,1)$, $\tau=\epsilon^2/2$, and $\lambda\in(0,2)$.
		Let $\ue$ be the solution of the DPP~\eqref{DPPII} with initial datum $u_0\in C_b^\lambda(\Rd)$. Assume that~\eqref{m-M} holds, and define $U_\epsilon(x,t)$ and $V_\epsilon(x,t)$ as in~\eqref{interpolation-lin} and~\eqref{interpolation-const}, respectively. Take a (locally uniform) convergent subsequence of $\{U_\epsilon\}_{\epsilon>0}$. Then, the same subsequence of $\{V_\epsilon\}_{\epsilon>0}$ converges to the same limit.
	\end{lem}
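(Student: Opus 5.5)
The plan is to show directly that $\|U_\epsilon-V_\epsilon\|_{\LL^\infty(\Rd\times[0,\infty))}\to0$ as $\epsilon\to0^+$, uniformly in time. Once this is known, the conclusion follows from the triangle inequality. Indeed, if $U_{\epsilon_j}\to u$ locally uniformly, then on any compact $K\subset\Rd\times[0,\infty)$
\[
\sup_K|V_{\epsilon_j}-u|\le \sup_K|V_{\epsilon_j}-U_{\epsilon_j}|+\sup_K|U_{\epsilon_j}-u|,
\]
and both terms tend to zero along the subsequence.

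For the key estimate, fix $(x,t)$ with $t\in[t_j,t_{j+1})$. By the definitions \eqref{interpolation-lin} and \eqref{interpolation-const},
\[
U_\epsilon(x,t)-V_\epsilon(x,t)=\frac{t-t_j}{\tau}\big(\ue^{j+1}(x)-\ue^j(x)\big),
\]
and $0\le (t-t_j)/\tau<1$. Hence
\[
|U_\epsilon(x,t)-V_\epsilon(x,t)|\le \|\ue^{j+1}-\ue^j\|_{\LL^\infty(\Rd)}.
\]
This is exactly one time step of the grid. By the time-regularity theorem proved just above at the mesh times (Steps 2--4 there), it is bounded by $C\|u_0\|_{C^\lambda(\Rd)}\tau^{\lambda/2}$. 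Taking $j=1$ in the displayed estimate $\|\ue^{k+j}-\ue^k\|\le C\|u_0\|_{C^\lambda}(t_{k+j}-t_k)^{\lambda/2}$ gives this with a constant independent of $k$. Since $\tau=\epsilon^2/2$, we obtain
\[
\sup_{\Rd\times[0,\infty)}|U_\epsilon-V_\epsilon|\le C\|u_0\|_{C^\lambda(\Rd)}\,\epsilon^{\lambda}\to0 .
\]

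I do not expect a serious obstacle. The only point needing care is bookkeeping. The grid offset $t_0\in(-\tau,0]$ should be fixed consistently for both interpolations. The time-regularity estimate must be applied with a constant uniform in $k$, which the preceding theorem provides since its constant depends only on $p$, $d$ and $\lambda$. The resulting convergence is in fact uniform on $\Rd\times[0,\infty)$, which is stronger than the locally uniform convergence required.
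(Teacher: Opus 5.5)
Your proof is correct and follows the paper's argument: the triangle inequality through $U_\epsilon$, the identity $U_\epsilon-V_\epsilon=\frac{t-t_j}{\tau}\big(\ue^{j+1}-\ue^j\big)$, and the one-step mesh-time regularity bound $C\|u_0\|_{C^\lambda(\Rd)}(\epsilon^2/2)^{\lambda/2}$. Your remark that this bound is uniform on all of $\Rd\times[0,\infty)$ is also implicit in the paper's proof, since its estimate does not depend on $(x,t)$.
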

	
	\begin{proof}
		Let $\delta>0$ be arbitrary, and fix $x\in\Rd$ and $t\in[t_j,t_{j+1})$. Using that $U_\epsilon \to u$ locally uniformly, we obtain
		\begin{align*}
			|V_\epsilon(x,t)-u(x,t)|
			&\le |V_\epsilon(x,t)-U_\epsilon(x,t)|
			+ |U_\epsilon(x,t)-u(x,t)| \\
			&= \frac{t-t_j}{\tau}\,|\ue^j(x)-\ue^{j+1}(x)|
			+ |U_\epsilon(x,t)-u(x,t)| \\
			&\le C \|u_0\|_{C^\lambda(\Rd)}
			\left(\frac{\epsilon^2}{2}\right)^{\lambda/2}
			+ |U_\epsilon(x,t)-u(x,t)|.
		\end{align*}
		Choosing $\epsilon>0$ sufficiently small, both terms on the right-hand side are bounded by $\delta/2$, and therefore $$|V_\epsilon(x,t)-u(x,t)| \le \delta.$$ Taking the supremum in any compact subset of $\mathbb{R}^d$ yields $V_\epsilon \to u$ locally uniformly in $\Rd\times [0,\infty)$.
	\end{proof}
	
	Now, we verify that any possible uniform limit of the DPP solutions, $u$,  is a viscosity solution of \eqref{problemII}. 
	
	\begin{thm} \label{teo.sol.Rd}
		Let $d\geq 1$, $p>2$, $\lambda\in(0,2)$, and $\epsilon\in(0,1)$. Let $u_0\in C_b^\lambda(\Rd)$, and assume that \eqref{m-M} holds. Let $\ue$ denotes the solution of the DPP~\eqref{DPPII}. If $u$ is
		any possible limit along a subsequence of $\ue$, 
		$$\ue \to u\qquad \mbox{locally uniformly in }\Rd\times[0,\infty) \quad \mbox{as }\epsilon\to 0^+,$$
		then $u$ is a viscosity solution to~\eqref{problemII}.
	\end{thm}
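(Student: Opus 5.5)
The plan follows the proof of Theorem~\ref{convergence}, but with the half-relaxed limits replaced by the locally uniform limit $u$ along the subsequence $\epsilon_j\to0^+$. I will check the initial condition, the subsolution property and the supersolution property in turn.

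\emph{Initial condition and continuity.} The limit $u$ is continuous, since it is a locally uniform limit of the equicontinuous family $U_\epsilon$. Taking $s=0$ in the time estimate \eqref{equi-inter} and passing to the limit gives $|u(x,t)-u_0(x)|\le C\|u_0\|_{C^\lambda}t^{\lambda/2}$. Hence $u(x,0)=u_0(x)$ and the initial datum is attained continuously. The preceding lemma lets me work with $V_{\epsilon_j}$, which converges to the same $u$. The grid values of $V_{\epsilon_j}$ are exactly those of $u_{\epsilon_j}$, so they satisfy the DPP \eqref{DPPII} pointwise on the grid.

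\emph{Subsolution property.} Fix $(x_0,t_0)\in\Rd\times(0,\infty)$ and a test function $\ph\in C^\infty$ touching $u$ strictly from above at $(x_0,t_0)$ on a compact cylinder $K=\overline{B_R(x_0)}\times[t_0-\tau,t_0+\tau]$. Uniform convergence on $K$ gives points $(y_j,s_j)\to(x_0,t_0)$ that are approximate maxima of $u_{\epsilon_j}-\ph$ over $K$:
\[
u_{\epsilon_j}-\ph\le u_{\epsilon_j}(y_j,s_j)-\ph(y_j,s_j)+e^{-1/\epsilon_j}.
\]
I take $s_j$ on the time grid, because $u_{\epsilon_j}$ equals $V_{\epsilon_j}$, which is piecewise constant in time. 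I then use the DPP at $(y_j,s_j)$, the monotonicity and translation invariance of $\A_\epsilon$ (Remark~\ref{properties-A}), and the fact that $\A_\epsilon$ only samples points at distance at most $\epsilon^\gamma\to0$, so these stay inside $K$. Exactly as in the chain of inequalities in the proof of Theorem~\ref{convergence}, this gives
\[
0\ge \frac{\ph(y_j,s_j)-\A_{\epsilon_j}[\ph](y_j,s_j-\epsilon_j^2/2)}{\epsilon_j^2/2}-\frac{2e^{-1/\epsilon_j}}{\epsilon_j^2}.
\]

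\emph{Passing to the limit.} The next step is to expand the right-hand side with Theorems~\ref{exp-c3} and \ref{exp-c3-grad}, with the error controlled uniformly in $(y_j,s_j)$, and split into two cases.
\begin{itemize}
\item If $\nabla\ph(x_0,t_0)\neq0$, then $|\nabla\ph|$ is bounded below on a neighbourhood, and \eqref{error-para} gives $\widehat E\to0$ uniformly there.
\item If $\nabla\ph(x_0,t_0)=0$, then \eqref{error-para-grad} bounds the error by $\|D^2\ph\|^{1-\alpha}|\nabla\ph(y_j,s_j)|^\alpha+o(1)$. This tends to $0$ because $\nabla\ph(y_j,s_j)\to0$ and $(\plap\ph)^{1/(p-1)}$ is continuous.
\end{itemize}
In both cases, letting $j\to\infty$ yields $\partial_t\ph(x_0,t_0)\le(\plap\ph(x_0,t_0))^{1/(p-1)}$, which is the subsolution inequality. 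The supersolution property follows symmetrically, with minima in place of maxima, or by applying the same argument to $-u$ using $\A_\epsilon[-\ph]=-\A_\epsilon[\ph]$.

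\emph{Main obstacle.} I expect the hardest part to be the uniformity of the error in the degenerate case $\nabla\ph(x_0,t_0)=0$. The bound \eqref{error-para-grad} must genuinely vanish along the sequence $(y_j,s_j)$, and the approximate maxima must be taken on the time grid. Both points need the careful handling sketched above.
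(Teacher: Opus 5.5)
Your argument is the paper's argument. The paper likewise fixes a strictly touching smooth test function and takes points $(y_j,s_j)\to(x_0,t_0)$ where $u_{\epsilon_j}-\ph$ is (almost) maximal. It then applies the DPP there together with monotonicity and translation invariance of $\A_\epsilon$, and concludes with the expansions of Section~\ref{sec3}, controlling the error in the same two cases as in Theorem~\ref{convergence}; the supersolution part follows by symmetry. Your $e^{-1/\epsilon_j}$-quasi-maxima are in fact more careful than the paper's appeal to exact maximum points. Since $\ue$ jumps in time, $\ue-\ph$ need not attain its supremum on $K$.

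The one incorrect step is choosing $s_j$ on the time grid, justified by piecewise constancy.

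\emph{Why it fails.} The solution satisfies $\ue(\cdot,t)=u_\epsilon^k$ for $t\in((k-1)\epsilon^2/2,\,k\epsilon^2/2]$, so it agrees with $V_\epsilon$ only at grid times. Suppose $\partial_t\ph(x_0,t_0)>0$. Then for each grid point $(x,k\epsilon^2/2)$ near $(x_0,t_0)$, the points $(x,s)$ with $s\downarrow(k-1)\epsilon^2/2$ give a value of $\ue-\ph$ larger by about $\frac{\epsilon^2}{2}\partial_t\ph(x_0,t_0)$. Grid points far from $(x_0,t_0)$ are worse still, by strictness of the maximum. Hence, for $j$ large, no grid point satisfies your quasi-maximum inequality over all of $K$. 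A slack of order $\epsilon^2$ would survive the division by $\epsilon^2/2$ and spoil the final inequality.

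\emph{How to repair it.} The restriction is simply unnecessary: \eqref{DPPII} holds at every $t>-\epsilon^2/2$, so you can apply it at an arbitrary quasi-maximum $(y_j,s_j)$ with $s_j>0$, as the paper does. Alternatively, keep $s_j$ on the grid but take the quasi-maximum only over the grid points of $K$. This suffices because the DPP at a grid time $s_j$ only involves $\ue(\cdot,s_j-\epsilon^2/2)$, which is again at a grid time. Such grid-restricted quasi-maxima still converge to $(x_0,t_0)$, by the uniform convergence and the strictness of the maximum of $u-\ph$. With either fix, the rest of your proof goes through.
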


	\begin{proof}
		Here we prove only that the limit $u$ is a viscosity subsolution since checking that it is a viscosity supersolution is analogous.
		
		On the one hand, it is immediate to check that $u(x,0)\le u_0(x)$ in $\Rd$ by the uniform convergence. Indeed,
		\[
		u(x,0)\le |u(x,0)-\ue(x,0)|+u_0(x),
		\]
		and the first term goes to $0$ as $\epsilon\to 0$.
		
		On the other hand, let $(x_0,t_0)\in\Rd\times(-\tau,\infty)$ and let $\ph\in C^\infty$ in a neighborhood of this point $(x_0,t_0)$ be such that $\ph(x_0,t_0)=u(x_0,t_0)$ and $\ph(x,t)>u(x,t)$ for all $(x,t)\ne (x_0,t_0)$. Then, we want to show that
		$$	(\partial_t\ph(x_0,t_0))^{p-1}\le \plap\ph(x_0,t_0).$$
		Since $(x_0,t_0)$ is a maximum point of $u-\ph$, it is standard (see for example \cite{Crandall_Ishii_Lions_1992}) that for every $\epsilon>0$, if $\ue$ converges uniformly to $u$ as $\epsilon\to 0^+$, there exists a sequence $(x_\epsilon,t_\epsilon)\to(x_0,t_0)$ as $\epsilon\to 0^+$ such that each $(x_\epsilon,t_\epsilon)$ is a maximum point of $\ue-\ph$. Hence, using that $\ue$ satisfies equation \eqref{DPPII} and the results concerning asymptotic expansions exposed in Section \ref{sec3}, we obtain
		\begin{align*}
			0
			&=\frac{\ue(x_\epsilon,t_\epsilon)-\A_\epsilon[\ue](x_\epsilon,t_\epsilon-\epsilon^2/2)}{\epsilon^2/2} \\
			&=\frac{\ph(x_\epsilon,t_\epsilon)+\xi_\epsilon-\A_\epsilon[\ue](x_\epsilon,t_\epsilon-\epsilon^2/2)}{\epsilon^2/2} \\
			&\ge \frac{\ph(x_\epsilon,t_\epsilon)+\xi_\epsilon-\A_\epsilon[\ph](x_\epsilon,t_\epsilon-\epsilon^2/2)-\xi_\epsilon}{\epsilon^2/2} \\
			&=\partial_t \ph(x_\epsilon,t_\epsilon-\epsilon^2/2)
			-\bigl(\plap\ph(x_\epsilon,t_\epsilon-\epsilon^2/2)\bigr)^{\frac{1}{p-1}}+E(\epsilon,\ph),
		\end{align*}
		where $\xi_\epsilon=\ue(x_\epsilon,t_\epsilon)-\ph(x_\epsilon,t_\epsilon)$. Moreover, the error can be controlled regardless of the point $(x_\epsilon,t_\epsilon)$ as we explained before in the proof of Theorem \ref{convergence}. Taking limits as $\epsilon\to 0^+$ and using the regularity of $\ph$ yields the desired inequality.
	\end{proof}

Finally, let us prove that we have convergence of the whole family $u_\epsilon$ as $\epsilon \to 0^+$ when 
the initial condition decays exponentially. To this end, we need the following one-dimensional lemma
whose proof is straitforward. 

\begin{lem} \label{lemma.supersol.expon}
Let $U : \mathbb{R} \times [0,\infty) \mapsto \mathbb{R}$ be given by $U (x,t) = C e^{L t} e^{x}.$ Then
\begin{equation} \label{solu.U}
U_t (x,t) - ((|U_x|^{p-2} U_x)_x (x,t))^{\frac{1}{p-1}} = (L-(p-1)^{\frac{1}{p-1}}) U(x,t).
\end{equation}
\end{lem}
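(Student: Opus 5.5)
This is a direct computation, which we organise in three short steps. Throughout, assume $C>0$ (for $C<0$ both sides are odd in $U$ and the identity follows by symmetry; for $C=0$ it is trivial).

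First compute the derivatives of $U(x,t)=Ce^{Lt}e^{x}$:
\[
U_t = L\,U, \qquad U_x = U, \qquad |U_x|^{p-2}U_x = U^{p-1} = C^{p-1}e^{(p-1)Lt}e^{(p-1)x}.
\]
Here we used that $U>0$, so the signed power reduces to an ordinary power.

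Next differentiate the flux in $x$:
\[
(|U_x|^{p-2}U_x)_x = (p-1)\,C^{p-1}e^{(p-1)Lt}e^{(p-1)x} = (p-1)\,U^{p-1} \ge 0 .
\]
Since this quantity is nonnegative, the signed power $(\cdot)^{\frac{1}{p-1}}$ coincides with the usual positive root. We therefore get
\[
\big((|U_x|^{p-2}U_x)_x\big)^{\frac{1}{p-1}} = (p-1)^{\frac{1}{p-1}}\,U .
\]

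Finally, subtracting this from $U_t=LU$ gives
\[
U_t - \big((|U_x|^{p-2}U_x)_x\big)^{\frac{1}{p-1}} = \big(L-(p-1)^{\frac{1}{p-1}}\big)U,
\]
which is \eqref{solu.U}.

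The only point needing care is the bookkeeping with signed powers. Because $U$ and the one-dimensional $p$-Laplacian of $U$ both keep a fixed sign, the identity holds pointwise on all of $\mathbb{R}\times[0,\infty)$. In particular, choosing $L\ge (p-1)^{\frac{1}{p-1}}$ makes $U$ a classical supersolution.
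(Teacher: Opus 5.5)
Your proposal is correct and follows the same route as the paper, which simply records $U_t = CLe^{Lt}e^{x}$ and $U_x = Ce^{Lt}e^{x}$ and leaves the rest of the computation implicit. You additionally spell out the signed-power bookkeeping, including reducing $C<0$ to $C>0$ by oddness, which fills in details the paper omits.
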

\begin{proof}
The result follows from the next computation
$$
U_t (x,t) = C L e^{L t} e^{x} \quad \mbox{and} \quad
U_x (x,t) = C e^{L t} e^{x}.
$$
\end{proof}

With this function $U$ at hand we can obtain a uniform smallness for the whole family $u_\epsilon$
far from the origin. 

\begin{thm} \label{teo.unif.decay}
	Let $d\geq 1$, $p>2$, $\epsilon\in(0,1)$ and assume that \eqref{m-M} holds. If $\ue$ is the solution of the DPP \eqref{DPPII} with initial datum $u_0$ which satisfies
	\begin{equation}\label{exp.decay.u0.22}
		|u_0(x)| \leq C e^{-|x|}.
	\end{equation}
	Then, given $\eta >0$ and $T>0$, there exist $\epsilon_0>0$ and $K>0$ such that for all $|x| \geq K$, $t \in [0,T]$, and $0<\epsilon<\epsilon_0$,
	\begin{equation}\label{cota.unif}
		|u_\epsilon (x,t)| \leq \eta.
	\end{equation}
\end{thm}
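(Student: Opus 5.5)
The plan is to build explicit exponential barriers from Lemma \ref{lemma.supersol.expon} and compare them with $\ue$ through the comparison principle for the DPP in $\Rd$ (Theorem \ref{comparison} with Remark \ref{rem-wholespace}). It suffices to bound $\ue$ from above; the lower bound follows by applying the same argument to $-\ue$, which solves the DPP with datum $-u_0$ because $\A_\epsilon[-u]=-\A_\epsilon[u]$. A single one-dimensional exponential $e^{x_1}$ does not decay in all directions, so I will use, for each unit direction $e$ (or for the $2d$ coordinate directions $\pm e_i$), the function
\[
W_e(x,t)=C'\,e^{Lt}\,e^{-\langle x,e\rangle} .
\]
This function depends on one variable only, so its $p$-Laplacian is the one-dimensional one, and Lemma \ref{lemma.supersol.expon} (with $x\mapsto -x$) gives $\partial_t W_e-(\plap W_e)^{1/(p-1)}=(L-(p-1)^{1/(p-1)})W_e$. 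I choose $L=(p-1)^{1/(p-1)}+1$, so that $W_e$ is a strict classical supersolution, with a margin equal to $W_e>0$.

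Next I pass to the DPP. Since $\nabla W_e\neq0$ everywhere, Theorem \ref{exp-c3} applies. Its error bound involves $\|D^2W_e\|$, $\|D^3W_e\|$, $\|\nabla W_e\|$ and $|D^2W_e|^2/|\nabla W_e|$ on small balls, and each of these is comparable to $W_e(x,t)$ itself, because all derivatives of an exponential are multiples of it and the balls have radius at most $1$. Hence $|\widehat E|\le C\,\epsilon^{\kappa}W_e(x,t)$ for some $\kappa>0$, uniformly in $(x,t)\in\Rd\times[0,T]$. For $\epsilon<\epsilon_0$ this gives
\[
\A_\epsilon[W_e](x,t)\le W_e(x,t+\epsilon^2/2)-\tfrac{\epsilon^2}{2}W_e(x,t)\bigl(1-C\epsilon^{\kappa}\bigr)\le W_e(x,t+\epsilon^2/2),
\]
so $W_e$ is a supersolution of \eqref{DPPII}. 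This uniform, relative control of the error is the main obstacle. The point to check is that the $C^3$ error estimate is homogeneous of degree one in $W_e$, which holds because each term scales linearly (the quotient $|D^2W|^2/|\nabla W|$ is also linear in $W$). The exponential growth of $W_e$ is unproblematic for comparison, since that argument is pointwise and iterative and does not need bounded functions; it only uses monotonicity of $\A_\epsilon$ (Lemma \ref{claim-2}) on each time step.

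For the initial data, $|u_0(x)|\le Ce^{-|x|}\le Ce^{-\langle x,e\rangle}$ on all of $\Rd$, since $\langle x,e\rangle\le|x|$. Taking $C'=C$, and allowing the $e^{Lt}$ factor on $(-\epsilon^2/2,0]$ by taking $C'=Ce^{L}$, we get $u_0\le W_e$ there. Comparison then gives $\ue(x,t)\le W_e(x,t)$ for every $e$.

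Finally, I take the infimum over $e$ and choose $e=x/|x|$, which gives
\[
\ue(x,t)\le C'e^{LT}e^{-|x|}\quad\text{for } t\in[0,T],\ \epsilon<\epsilon_0 .
\]
With $K=\log(C'e^{LT}/\eta)$, this yields \eqref{cota.unif} for $|x|\ge K$. The symmetric argument gives the lower bound.
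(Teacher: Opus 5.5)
Your proposal is correct and follows essentially the same route as the paper. The paper also builds exponential barriers $C e^{Lt}e^{\langle x,z\rangle}$ (your $W_e$ with $z=-e$) from Lemma~\ref{lemma.supersol.expon}, bounds every term of the $C^3$ AMVF error by a multiple of the barrier itself, applies comparison for the DPP, and minimizes over unit directions, obtaining the lower bound symmetrically; your extra care with the initial layer $(-\epsilon^2/2,0]$ (taking $C'=Ce^{L}$) and with using comparison for unbounded barriers only tightens points the paper passes over quickly.
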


\begin{proof}
Take any direction $z\in \mathbb{R}^d$ with $|z|=1$, and consider
$$
U^z (x,t) = C e^{L t} e^{\langle x, z \rangle},
$$
with $L=2(p-1)^{\frac{1}{p-1}}$.
Observe that
$$
|u_0(x)| \leq C e^{-|x|} \leq U^z (x,0) = C e^{\langle x, z \rangle}.
$$

The function $U^z$ belongs to $C^3$ and has a non-vanishing gradient. Hence, we can apply Theorem \ref{thm:AMVFparab-intro}, note that $\beps\in (0,\min[R,\tau])$ is fixed, to obtain
\begin{equation*}
	\A_\epsilon[ U^z ](x,t)= U^z \Big(x,t+\frac{\epsilon^2}{2}\Big)
	+\frac{\epsilon^2}{2}\Big(\left(\plap U^z (x,t)\right)^{\frac{1}{p-1}}-\partial_t U^z (x,t)\Big)
	+\epsilon^2 \widehat{E}(\epsilon, U^z, x,t),
\end{equation*}
where, if $k_1$ and $k_2$ are positive constants depending only on $p$ and $d$, the error term satisfies
\begin{align*}
	\left|\widehat{E}(\epsilon, U^z ,x,t)\right|
	&\leq \epsilon^2 \|\partial_{tt}U^z \|_{\LL^\infty([t-\beps^2/2,t+\beps^2/2])}
	+ k_1 \epsilon^{2-\frac{4}{p}} \|D^2 U^z\|_{\LL^\infty(B_{\beps}(x))}\\
	&\quad + k_2 \epsilon^{\frac{2}{3p-4}}
	\left(\|\gr U^z \|_{\LL^\infty(B_{\beps}(x))}
	+\|D^3 U^z \|_{\LL^\infty(B_{\beps}(x))}
	+\frac{|D^2 U^z (x,t)|^2}{|\gr U^z(x,t)|}\right).
\end{align*}

 Now, note that the derivatives that appear in the error estimation can be bounded by the same function $U^z$, for example $$
 \begin{array}{l}
\displaystyle  \| D^2 U^z\|_{\LL^\infty (B_{\beps}(x))}= \|C e^{L t} e^{\langle x+\cdot,z\rangle}z_i z_j\|_{\LL^\infty(B_1)} \\ [4pt]
\displaystyle \quad \leq C e^{Lt}e^{\langle x,z\rangle}\|e^{\langle\cdot,z\rangle}z_i z_j\|_{\LL^\infty(B_1)} = U^z(x,t) 
 \|e^{\langle\cdot,z\rangle}z_i z_j\|_{\LL^\infty(B_1)}.
\end{array}$$ 
 Therefore, there exists a constant $k_3>0$ (depending only on $d,p$, and $\beps$) such that
$$
\|\partial_{tt}U^z \|_{\LL^\infty([t-\beps^2/2,t+\beps^2/2])} \leq k_3 U^z (x,t),
$$
$$
\|D^2 U^z\|_{\LL^\infty(B_{\beps}(x))} \leq k_3 U^z (x,t),
$$
and
$$
\|\gr U^z \|_{\LL^\infty(B_{\beps}(x))}
+\|D^3 U^z \|_{\LL^\infty(B_{\beps}(x))}
+\frac{|D^2 U^z (x,t)|^2}{|\gr U^z(x,t)|}
\leq k_3 U^z (x,t).
$$

Therefore, using Lemma \ref{lemma.supersol.expon}, we obtain that for $\alpha = \min \left\{ 2-\frac{4}{p}, \frac{2}{3p-4} \right\} >0$
\begin{align*}
	U^z \Big(x,t+\frac{\epsilon^2}{2}\Big) - \A_\epsilon[ U^z ](x,t)
	&= \frac{\epsilon^2}{2}\Big(\left(\partial_t U^z (x,t)- \plap U^z (x,t)\right)^{\frac{1}{p-1}} \Big)
	+\epsilon^2 \widehat{E}(\epsilon, U^z, x,t)\\
	&\geq \frac{\epsilon^2}{2} \Big( (L-(p-1)^{\frac{1}{p-1}}) - 3 \epsilon^{\alpha} k_3 \Big) U^z (x,t),
\end{align*}

We conclude that, as $L-(p-1)^{\frac{1}{p-1}}>0$, there exists $\epsilon_0>0$ such that, for all $\epsilon < \epsilon_0$,
$$
U^z \Big(x,t+\frac{\epsilon^2}{2}\Big) - \A_\epsilon[ U^z ](x,t) \geq 0,
$$
for every $(x,t) \in \mathbb{R}^d \times [0,T]$.
Hence, for $\epsilon$ small enough, $U^z (x,t)$ is a supersolution to the DPP in $\mathbb{R}^d \times [0,T]$, with $U^z (x,0) \geq u_0 (x)$.
By the comparison principle for the DPP, see Remark \ref{rem-wholespace} and Theorem \ref{comparison}, we obtain
$$
u_\epsilon (x,t) \leq U^z(x,t) = C e^{L t} e^{\langle x, z \rangle}
\leq C e^{L T} e^{\langle x, z \rangle},
$$
for every $(x,t) \in \mathbb{R}^d \times [0,T]$.
Since $z$ is arbitrary, we deduce that
$$
u_\epsilon (x,t) \leq \min_{|z|=1}\left\{ C e^{L T} e^{\langle x, z \rangle}\right\}
=  C e^{L T} e^{-|x|}
\leq \eta,
$$
for every $|x| \geq K$ and every $t \in [0,T]$, provided $\epsilon < \epsilon_0$.

A similar argument applied to $-U^z (x,t) = - C e^{L t} e^{\langle x, z \rangle}$ yields
$$
u_\epsilon (x,t) \geq - C e^{L T} e^{-|x|}
\geq - \eta,
$$
for every $|x| \geq K$ and every $t \in [0,T]$, for $\epsilon < \epsilon_0$, which completes the proof.
\end{proof}

With this result at hand we can show that the whole family $u^\epsilon$ converges uniformly. 

\begin{thm} \label{teo.conv.unif.6}
	Let the hypotheses of Theorem \ref{teo.sol.Rd} holds. Let $u_\epsilon$ denote the solution to the DPP~\eqref{DPPII} with initial datum $u_0\in C_b^\lambda(\Rd)$, which satisfies
	\begin{equation}\label{exp.decay.u0.44}
		|u_0(x)| \leq C e^{-|x|}.
	\end{equation}
	Then, for every $T>0$,
	\[
	u_{\epsilon} \to u 
	\quad \text{uniformly in } \Rd\times[0,T]
	\quad \text{as } \epsilon \to 0^+,
	\]
	where $u \in UC(\Rd\times [0,\infty))$ is a viscosity solution to \eqref{problemII.intro}.
\end{thm}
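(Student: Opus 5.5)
The plan is to combine the compactness already established with a uniqueness statement for limits that decay at infinity. The decay itself will come from Theorem \ref{teo.unif.decay}.

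First I record what is available. By the previous results (uniform boundedness \eqref{eq5-2}, spatial equicontinuity \eqref{eq5-1}, time equicontinuity, and the interpolation lemmas), every sequence $\epsilon_j\to0^+$ has a subsequence along which $u_{\epsilon_j}\to u$ locally uniformly in $\Rd\times[0,\infty)$. By Theorem \ref{teo.sol.Rd}, any such limit $u$ is a viscosity solution of \eqref{problemII}. By Theorem \ref{teo.unif.decay}, for every $\eta>0$ and $T>0$ there are $K,\epsilon_0$ with $|u_\epsilon(x,t)|\le\eta$ for $|x|\ge K$, $t\in[0,T]$ and $\epsilon<\epsilon_0$. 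Passing to the limit, every subsequential limit satisfies $|u(x,t)|\le\eta$ on the same set. In fact the proof of Theorem \ref{teo.unif.decay} gives $|u(x,t)|\le Ce^{LT}e^{-|x|}$, so all limits decay uniformly at infinity.

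The key step is to show that any two subsequential limits $u,v$ coincide. Fix $T>0$ and $\eta>0$, take $K$ as above, and work in the bounded cylinder $B_K\times(0,T)$. On the parabolic boundary we have $u=v=u_0$ at $t=0$, and on $\partial B_K\times[0,T]$ we have $|u-v|\le2\eta$. Since the equation is invariant under adding constants, $v+2\eta$ is a viscosity supersolution and $u$ a viscosity subsolution, both continuous on the closed cylinder, with $u\le v+2\eta$ on the parabolic boundary. The comparison principle for viscosity solutions in bounded domains from \cite{Lindgren_Hynd_2016} (Section 4, already used in Theorem \ref{convergence}) then yields $u\le v+2\eta$ in $B_K\times[0,T]$. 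The same holds trivially for $|x|\ge K$, where both functions are bounded by $\eta$. Exchanging the roles of $u$ and $v$ and letting $\eta\to0$, then $T\to\infty$, gives $u=v$. Hence the limit is unique, and by the usual subsequence argument the whole family converges locally uniformly to $u$.

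Finally I upgrade to uniform convergence on $\Rd\times[0,T]$. Given $\eta$, choose $K$ and $\epsilon_0$ from Theorem \ref{teo.unif.decay}. On $\{|x|\ge K\}$ we have $|u_\epsilon-u|\le2\eta$. On the compact set $\overline{B_K}\times[0,T]$, local uniform convergence gives $|u_\epsilon-u|\le\eta$ for $\epsilon$ small. The regularity $u\in UC$ is inherited from the equicontinuity estimates.

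The main obstacle is the comparison step. The cited comparison principle must apply to continuous sub- and supersolutions with merely continuous, ordered parabolic boundary data on a ball, and I must check that it handles the time-derivative nonlinearity $(\partial_t u)^{p-1}$ in that setting. I would first verify that the Dirichlet comparison in \cite{Lindgren_Hynd_2016} is stated for general semicontinuous sub/supersolutions on bounded cylinders rather than only for solutions of \eqref{problem-intro-1}.
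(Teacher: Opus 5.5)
Your argument is correct, but it is organized differently from the paper's.

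The paper never proves directly that subsequential limits are unique. It fixes one subsequential limit $u$ and introduces the auxiliary Dirichlet problem \eqref{problem-R} on $B_K\times(0,T)$, with lateral data $u_0$, together with its DPP counterpart $u_{\epsilon,K}$ from \eqref{DPP-R}. It then uses the PDE comparison principle to get $|u-u_K|\le\eta$ on $B_K\times[0,T]$, and the DPP comparison principle (Theorem \ref{comparison}) to get $|u_\epsilon-u_{\epsilon,K}|\le\eta$ there. Finally it invokes the bounded-domain convergence of Section \ref{sec4} ($u_{\epsilon,K}\to u_K$) to conclude $\limsup_{\epsilon\to0^+}\|u-u_\epsilon\|_{\LL^\infty(\Rd\times[0,T])}\le 4\eta$.

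You instead compare two subsequential limits directly on $B_K\times[0,T]$ and close with the standard argument: compactness plus a unique limit gives convergence of the whole family, and the decay estimate then upgrades local uniform convergence to uniform convergence.

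Your route is leaner. It needs only three ingredients: the PDE comparison principle, the decay estimate of Theorem \ref{teo.unif.decay}, and the compactness already established. The comparison principle is the same one the paper uses, applied to the same kind of objects (viscosity solutions shifted by constants), so the concern in your last paragraph is no stronger than what the paper already assumes. Your route avoids the DPP on $B_K$, the barrier machinery behind Theorem \ref{convergence}, and the bookkeeping of exterior data for the DPP comparison. As a by-product it gives uniqueness of viscosity solutions of \eqref{problemII} among those decaying to zero at infinity uniformly on $[0,T]$.

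The paper's route, in exchange, reduces the whole-space question to the bounded-domain theory of Section \ref{sec4}. It bounds $\|u-u_\epsilon\|$ directly by the bounded-domain error $\|u_K-u_{\epsilon,K}\|$ plus $4\eta$, and so identifies the limit locally with the Dirichlet solution $u_K$.
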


\begin{proof}
	From Theorem \ref{teo.sol.Rd}, there exists a subsequence $\epsilon_j$ such that $u_{\epsilon_j}$
	converges locally uniformly to a limit $u$, which is a viscosity solution to \eqref{problemII.intro}. 
    
    Using the previous result, given $\eta>0$, we can choose $K>0$ large enough such that 
	\begin{equation}\label{cota.unif.66}
		|u_\epsilon (x,t)| \leq \eta
	\end{equation}
	for every $|x| \geq K$ and every $t \in [0,T]$, provided $\epsilon < \epsilon_0$.
	Hence, passing to the limit, we also have
	\begin{equation}\label{cota.unif.77}
		|u (x,t)| \leq \eta
	\end{equation}
	for every $|x| \geq K$ and every $t \in [0,T]$.
	
	Now, let $u_K$ denote the solution to
	\begin{equation}\label{problem-R}
		\left\{
		\begin{array}{rlll}
			\left( \partial_t u(x,t) \right)^{p-1} &=& \plap u(x,t), & \text{in } B_K\times (0,T),\\ [4pt]
			u(x,0)&=&u_0(x), & \text{in } B_K,\\ [4pt]
			u(x,t)&=& u_0 (x), & \text{on } \partial B_K \times (0,T),
		\end{array}
		\right.
	\end{equation}
	and let $u_{\epsilon,K}$ denote the solution to the corresponding DPP:
	\begin{equation}\label{DPP-R}
		\left\{
		\begin{array}{rlll}
			u_\epsilon(x,t)&=&\A_\epsilon[u_\epsilon](x,t-\epsilon^2/2), & \text{in } B_K\times (0,T),\\ [4pt]
			u_\epsilon(x,t)&=&u_0(x), & \text{in } B_K\times (-\epsilon^2/2,0],\\ [4pt]
			u_\epsilon(x,t)&=&u_0 (x), & \text{in } \Rd\setminus B_K \times (-\epsilon^2/2, T).
		\end{array}
		\right.
	\end{equation}
	
	Since $u$ satisfies \eqref{cota.unif.77} and solves \eqref{problemII.intro}, 
	the comparison principle (see \cite{Lindgren_Hynd_2016}) for \eqref{problem-R} yields
	\begin{equation}\label{ppp}
		u(x,t) - \eta \leq u_R (x,t) \leq u(x,t) + \eta,
	\end{equation}
	for every $|x| \leq K$ and every $t \in [0,T]$.
	Indeed, the function $w(x,t) = u(x,t) + \eta$ is a supersolution of 
	$\left( \partial_t w(x,t) \right)^{p-1}= \plap w(x,t)$ in $B_K \times (0,T)$,
	with $w(x,t) \geq u_0(x)$ on $\partial B_K \times (0,T)$ and $w(x,0) \geq u_0(x)$ in $B_K$.
	Thus, by the comparison principle, $u_K (x,t) \leq w(x,t) = u(x,t) + \eta$.
	The lower bound follows analogously by considering $w(x,t) = u(x,t) - \eta$.
	
	A similar argument, using the comparison principle for \eqref{DPP-R}, yields
	\begin{equation}\label{ppp.33}
		u_{\epsilon} (x,t) - \eta \leq u_{\epsilon,K} (x,t) \leq u_\epsilon (x,t) + \eta,
	\end{equation}
	for every $|x| \leq K$ and every $t \in [0,T]$.
	
	Combining \eqref{cota.unif.66}, \eqref{cota.unif.77}, \eqref{ppp}, and \eqref{ppp.33}, for $K$ large enough and $\epsilon < \epsilon_0$, we obtain
	\[
	\begin{array}{l}
		\displaystyle \| u - u_{\epsilon} \|_{L^\infty (\mathbb{R}^d \times [0,T]) } 
		\leq \| u - u_{\epsilon} \|_{L^\infty (B_K \times [0,T]) } + 2 \eta \\[8pt]
		\qquad \displaystyle \leq
		\| u_K - u_{\epsilon,K} \|_{L^\infty (B_K \times [0,T]) }
		+ \| u - u_{K} \|_{L^\infty (B_K \times [0,T]) } 
		+ \| u_\epsilon - u_{\epsilon,K} \|_{L^\infty (B_K \times [0,T]) } + 2 \eta \\[8pt]
		\qquad \leq \| u_K - u_{\epsilon,K} \|_{L^\infty (B_K \times [0,T]) }  + 4 \eta.
	\end{array}
	\]
	
	The convergence results for bounded domains established in Section \ref{sec4} imply that, for fixed $K>0$,
	\[
	\lim_{\epsilon \to 0^+} \| u_K - u_{\epsilon,K} \|_{L^\infty (B_K \times [0,T]) } = 0.
	\]
	Therefore,
	\[
	\limsup_{\epsilon \to 0^+}\| u - u_{\epsilon} \|_{L^\infty (\mathbb{R}^d \times [0,T]) } 
	\leq 4 \eta.
	\]
	Since $\eta>0$ is arbitrary, we conclude that
	\[
	\lim_{\epsilon \to 0^+}\| u - u_{\epsilon} \|_{L^\infty (\mathbb{R}^d \times [0,T]) } = 0,
	\]
	which completes the proof.
\end{proof}

	\paragraph{The game in $\mathbb{R}^d$.}

If we consider the problem in the whole space, the associated game is the same as the one described in this section, except that it ends only when the time becomes nonpositive, that is, when $t_\tau \le 0$, since there is no possibility of exiting a domain.

Repeating exactly the arguments of Theorem~\ref{game-thm}, one can prove that this game has a value and that it coincides with the solution of the DPP~\eqref{DPPII.intro}. Therefore, we conclude that the value of the game converges, up to a subsequence, as $\epsilon \to 0^+$, to a solution of the parabolic problem posed in the whole space~\eqref{problemII.intro}. In addition, if the initial condition decays exponentially fast, we have the convergence of the 
whole family $u_\epsilon$ as $\epsilon \to 0^+$. 
	
	\appendix
	
	\section{Viscosity solutions}\label{ap:viscosity}

	We devote this appendix to discuss the notion of viscosity solutions of the doubly-nonlinear parabolic equation and also for the
	asymptotic mean value formula used in this work.
	
	Let us start with the definition of a viscosity solution to the equation
	\begin{equation}\label{equation.A}
		\left( \partial_t u (x,t) \right)^{p-1}- \plap u (x,t) = 0.
	\end{equation}

	\begin{defn}[\textbf{Viscosity Solution I}]\label{viscosity-equation}
		A bounded upper semicontinuous function $u:\Omega \times (0,T)\to \RR$ is a viscosity subsolution of the parabolic equation \eqref{equation.A} in $\Omega\times (0,T)$ if, whenever $(x_0,t_0)\in \Omega\times (0,T)$ and $\ph\in C^2(B_R(x_0)\times [t_0-R,t_0+R])$ for some $R>0$ are such that $\ph(x_0,t_0)=u(x_0,t_0)$ and $\ph(x,t)>u(x,t)$ for all $(x,t)\in B_R(x_0)\times [t_0-R,t_0]$, then
		\[
		\left(\partial_t \ph(x_0,t_0)\right)^{p-1}\leq \plap \ph(x_0,t_0).
		\]
		Similarly, a bounded lower semicontinuous function $u$ is a viscosity supersolution of \eqref{equation.intro} in $\Omega\times (0,T)$ if, under the same assumptions but with $\ph(x_0,t_0)=u(x_0,t_0)$ and $\ph(x,t)<u(x,t)$ in $B_R(x_0)\times [t_0-R,t_0]$, we have
		\[
		\left(\partial_t \ph(x_0,t_0)\right)^{p-1}\geq \plap \ph(x_0,t_0).
		\]
		Moreover, a bounded continuous function $u$ is a viscosity solution of \eqref{equation.intro} if it is both a viscosity subsolution and a viscosity supersolution.
	\end{defn}
	
	This notion of solution is equivalent to the following one in which our mean value formula appears. 
	
	\begin{defn}[\textbf{Viscosity Solution II}]\label{viscosity-operator}
		A bounded upper semicontinuous (resp.\ lower semicontinuous) function $u$ is a viscosity subsolution (resp.\ supersolution) associated with the operator $\A_\epsilon$ if the following holds: for every $(x_0,t_0)\in \Omega \times [0,T]$ and every test function $\ph\in C^2(B_R(x_0)\times [t_0-R,t_0+R])$ for some $R>0$, such that $\ph(x_0,t_0)=u(x_0,t_0)$ and $\ph(x,t)>u(x,t)$ (resp.\ $\ph(x,t)<u(x,t)$) in $B_R(x_0)\times [t_0-R,t_0]$, we have
		\[
		\ph(x_0,t_0+\epsilon^2/2)\geq \A_\epsilon[\ph](x_0,t_0)+o(\epsilon^2),
		\quad \text{as }\epsilon\to 0^+, \qquad
		(\mbox{resp. } \ph(x,t+\epsilon^2/2)\leq \A_\epsilon[\ph](x,t)+o(\epsilon^2)).
		\]
	\end{defn}
	
	Here by $\ph(x_0,t_0+\epsilon^2/2)\geq \A_\epsilon[\ph](x_0,t_0)+o(\epsilon^2)$ (resp. $\ph(x,t+\epsilon^2/2)\leq \A_\epsilon[\ph](x,t)+o(\epsilon^2) $) we mean that
	$$
	0\geq \limsup_{\epsilon \to 0} \frac{ \A_\epsilon[\ph](x_0,t_0) - \ph(x_0,t_0+\epsilon^2/2)}{\epsilon^2},
	\qquad
	\left(\mbox{resp. } 0\leq \liminf_{\epsilon \to 0} \frac{ \A_\epsilon[\ph](x_0,t_0) - \ph(x_0,t_0+\epsilon^2/2)}{\epsilon^2}\right).
	$$
	The proof of the equivalence between both definitions follows from our asymptotic expansions (see Section \ref{sec3}) following the same
	arguments used in \cite{delTeso_Rossi_2024}.  
	
	Now, let us state the definition of being a viscosity solution to the boundary value problem
	\begin{equation}\label{problem-A-1}
		\left\{\begin{array}{rlll}
			\left( \partial_t u(x,t) \right)^{p-1} &=& \plap u(x,t), & \mbox{in } \Omega\times (0,\infty),\\[4pt]
			u(x,0)&=&u_0(x), & \mbox{in } \Omega,\\[4pt]
			u(x,t)&=&g(x), & \mbox{on } \partial \Omega\times (0,\infty).
		\end{array}\right.
	\end{equation}
	
	\begin{defn}[\textbf{Viscosity Solution III}]\label{viscosity-dirichlet}
		A bounded upper semicontinuous (resp.\ lower semicontinuous) function $u:\overline{\Omega}\times [0,\infty)\to \RR$ is a viscosity subsolution (resp.\ supersolution) to the Dirichlet problem \eqref{problem-A-1} if:
		\begin{enumerate}[label=\roman*)]
			\item $u$ is a viscosity subsolution (resp.\ supersolution) of \eqref{equation.intro} in the sense of Definition~\ref{viscosity-equation}.
			\item $u(x,0)\leq u_0(x)$ (resp.\ $u(x,0)\geq u_0(x)$) in $\Omega$, and $u(x,t)\leq g(x)$ (resp.\ $u(x,t)\geq g(x)$) in $\partial \Omega \times (0,\infty)$.
		\end{enumerate}
		
		Moreover, a bounded continuous function $u$ is a viscosity solution of \eqref{problem-A-1} if it is both a viscosity subsolution and a viscosity supersolution.
	\end{defn}
	
	For this boundary value problem, \eqref{problem-A-1}, a comparison principle for
	viscosity solutions is established in \cite{Hynd_Lindgren_2017}. From this comparison principle, in the same reference,
	the authors obtain 
	existence and uniqueness of a viscosity solution for continuous and compatible
	data $u_0$ and $g$. 
	
	The definition of viscosity solutions for the problem in the whole space is analogous. One only needs to replace $\Omega$ by $\Rd$ and remove the condition involving the function $g$. At this point we highlight again that uniqueness of viscosity solutions in the whole space is open.

\section*{Acknowledgments}
	
	This work was started during a visit of J. D. Rossi to Univ. Aut\'{o}noma de Madrid
	and 
	ICMAT (Spain)  and continued with a visit of 
	C. Fuertes Mor\'an to Univ. Torcuato di Tella (Argentina). The authors are grateful to both
	institutions for the friendly and stimulating working atmosphere. 
	
	F. Del Teso is funded by the Ram\'on y Cajal contract reference RYC2020-029589-I
and the research projects PID2021-127105NB-I00, CNS2024-154515 of the AEI, (Spain).

C. Fuertes was partially supported by the Projects PID2020-113596GB-I00, PID2023-150166NB-I00 and PID2021-127105NB-100 (Spanish Ministry of Science and Innovation) and acknowledge the financial support from the Spanish Ministry of Science and Innovation, through the ``Severo Ochoa Programme for Centres of Excellence in R\&D'' (CEX2019-000904-S and CEX2023-001347-S) and by the European Union’s Horizon 2020 research and innovation programme under the Marie Sk\l odowska-Curie grant agreement no. 777822.
	
	 J. D. Rossi was partially supported by 
			CONICET PIP GI No 11220150100036CO
(Argentina), PICT-03183 (Argentina) and UBACyT 20020160100155BA (Argentina).

	
	\noindent F\'elix del Teso and Carlos Fuertes Mor\'an. 
	
	\noindent Departamento de Matem\'{a}ticas, Universidad Aut\'{o}noma de Madrid,\\
	ICMAT - Instituto de Ciencias Matem\'{a}ticas, CSIC-UAM-UC3M-UCM, \\
	Campus de Cantoblanco, 28049 Madrid, Spain.
	
	\noindent E-mail:\texttt{~felix.delteso@uam.es }
	Web-page:  \texttt{https://sites.google.com/view/felixdelteso/home} 
	
	\noindent E-mail:\texttt{~carlos.fuertesm@uam.es}
	
	\bigskip
	
	\noindent Julio D. Rossi.
	
	\noindent Departamento de Matemáticas y Estadística, 
	Universidad Torcuato di Tella, \\
	Campus Alcorta - Av. Figueroa Alcorta 7350, Buenos Aires, Argentina.
	
	\noindent E-mail:\texttt{~julio.rossi@utdt.edu}		
	
\end{document}